\documentclass[10pt,reqno]{amsart}
\usepackage{amsfonts}
\usepackage{url,hyperref,multirow}
\usepackage{color}
\usepackage[dvipsnames]{xcolor}
\usepackage{cases}
\usepackage{subfigure}
\usepackage[ruled,vlined]{algorithm2e}
\usepackage{algorithmic}
\usepackage{cancel}
\usepackage{enumitem}
\definecolor{RED}{rgb}{1,0,0}

\usepackage{epsfig,amsmath,bm,caption2,epstopdf}
\usepackage{amssymb,version,graphicx,fancybox,mathrsfs,pifont,booktabs,mathtools}
\usepackage[marginparwidth=2.5cm, marginparsep=3mm]{geometry}

\catcode`\@=11 \theoremstyle{plain}
\@addtoreset{equation}{section}   

\@addtoreset{figure}{section}
\renewcommand\thefigure{\thesection.\@arabic\c@figure}
\@addtoreset{table}{section}
\renewcommand\thetable{\thesection.\@arabic\c@table}

\newtheorem{thm}{\bf Theorem}[section]
\newtheorem{cor}{\bf Corollary}[section]

\newtheorem{lmm}{\bf Lemma}[section]

\newenvironment{lemma}{\begin{lmm}}{\end{lmm}}
\theoremstyle{remark}
\newtheorem{remark}{\bf Remark}[section]
\theoremstyle{definition}

\newtheorem{exm}{\bf Example} 

\newcommand{\bs}[1]{\boldsymbol{#1}}

\begin{document}
\baselineskip 14pt

\title[Second-Order \(H^1\)-Norm Error Analysis]{Second-order $H^1$-norm error analysis for time-fractional advection-dispersion equations based on the fast averaged L1 method}
\author[
    L. Huang,\;  L. Li,\; P. Xie, \; M. Zhou
	]{Liangcai Huang${}^{\dag}$,\; Lin Li${}^{\ddag,*}$,\; and PENGCHENG XIE${}^{\S,*}$
		}
	\thanks{${}^{\dag}$Division of Mathematical Sciences, School of Physical and Mathematical Sciences, Nanyang Technological University, 637371, Singapore. (LHUANG037@e.ntu.edu.sg).\\
		\indent ${}^{\ddag}$School of Mathematics and Physics, University of South China, Hengyang 421001, China (lilinmath@usc.edu.cn).\\
        \indent ${}^{\S}$Applied Mathematics and Computational Research Division, Lawrence Berkeley National Laboratory, 1 Cyclotron Road, Berkeley, 94720, CA, USA (pxie@lbl.gov).\\
        \indent ${}^{*}$Corresponding author.
        }
\keywords{Time-fractional advection-dispersion equations,  nonuniform time meshes, fast averaged L1 method, $H^1$-norm error analysis.} \subjclass[2000]{65M06, 65M12, 65M15, 65M70.}

\begin{abstract}
In this paper, based on the fast averaged L1 method, we present an error analysis for time-fractional advection-dispersion equations with a weak singularity at the initial time. An integrating-factor transformation is introduced to convert the tempered fractional derivative into the standard Caputo derivative, which is more suitable for discretization using the fast averaged L1 method. A sum-of-exponentials approximation is then incorporated into the averaged L1 method to reduce computational cost and storage while preserving the desired accuracy. By deriving error estimates for the discrete coefficients and the accumulated truncation errors, we establish the stability and $H^1$-norm convergence analysis, with a convergence order higher than those in the published literature. Numerical examples are tested to validate our theoretical results. The effects of the fractional parameters $\alpha$ and $\lambda$ on the solution are discussed. The memory effect and long-time tail phenomenon, which are known to exist in real  systems yet cannot be captured by classical integer-order equations,  are again found in the current fractional case.
\end{abstract}
 \maketitle

\vspace*{-10pt}
\section{Introduction}
As seen in \cite{cartea2007,guo2019,stynes2017,zhang2014,zhu2019}, fractional partial differential equations (PDEs) have provided a powerful tool for simulating challenging phenomena such as anomalous diffusion, transport and memory effects. When the fractional derivative is introduced into advection-dispersion models, the core significance lies in overcoming the limitations of traditional integer-order models based on Brownian motion, thereby accurately characterizing the ubiquitous phenomenon of anomalous diffusion in nature and engineering. For integer-order diffusion models, the mean squared displacement scales linearly with time, indicating that these models fail to describe behaviors such as the heavy tail in pollutant transport through porous media, sub-diffusion in drug release from biological tissues, or the unexpectedly rapid super-diffusion jumps in turbulence and plasmas. While for fractional-order models, by virtue of their nonlocality and memory effects (i.e., the time-fractional derivative captures the system's dependence on its historical states, and the space-fractional derivative accounts for long-range, Lévy-flight-type correlations), these models can provide a physically unified description spanning the entire spectrum from sub-diffusion to super-diffusion. Consequently, research on fractional-order models not only helps offer a more reliable theoretical foundation for applications such as groundwater contamination prediction, controlled drug delivery design, and reservoir engineering, but also drives the development of anomalous transport mechanisms in mathematical modeling, scientific computing, and inverse problems, establishing the fractional derivative as a key tool in advection-dispersion theory in complex media. So far, a substantial amount of work has been done on the qualitative analysis of fractional PDEs and the numerical methods for the computation of their solutions, e.g., \cite{cao2020,chen2018,quan20232,shen2023,zheng20252,zhu2019}.

In this paper, we investigate the stability and convergence of the fast averaged L1 method on nonuniform time meshes for a time-fractional advection-dispersion equation (TFADE):
\begin{equation}\label{Equation}
\begin{cases}
\displaystyle
\partial_t u + {}_0^C D_t^{\alpha,\lambda} u
= \Delta u - \sum\limits_{j=1}^d \partial_{x_j}u + f(\boldsymbol{x},t),
\quad &(\boldsymbol{x},t)\in \Omega\times(0,T], \\
u(\boldsymbol{x},t) = 0,
\quad &(\boldsymbol{x},t)\in \partial\Omega\times(0,T], \\[5pt]
u(\boldsymbol{x},0) = \phi(\boldsymbol{x}),
\quad &\boldsymbol{x}\in \Omega,
\end{cases}
\end{equation}
where $\lambda>0$, $0<\alpha<1$, and $\Omega\subset\mathbb{R}^d$ $(d=1,2)$. As seen in \cite{SABZIKAR201514}, the Caputo tempered fractional derivative is defined as follows:
\begin{equation}\label{tempered}
{}_0^C D_t^{\alpha,\lambda} u
=
\frac{e^{-\lambda t}}{\Gamma(1-\alpha)}
\int_0^t \frac{(e^{\lambda s}u(s))'}{(t-s)^\alpha}\,ds,
\end{equation}
where $\Gamma(\cdot)$ denotes the gamma function. When $\lambda=0$, \eqref{tempered} becomes the standard Caputo fractional derivative, i.e., 
\begin{equation*}
{}_0^C D_t^{\alpha} u
=
\frac{1}{\Gamma(1-\alpha)}
\int_0^t \frac{u'(s)}{(t-s)^\alpha}\,ds.
\end{equation*}
Compared with the model in \cite{zheng2024}, the proposed time-fractional advection-dispersion equation (TFADE) presented in \eqref{Equation} exhibits several advantages. First, it incorporates the Caputo tempered fractional derivative, which  is capable of describing the transition from power-law tailing to exponential tailing. Second, the inclusion of a first-order derivative advection term allows the model to better simulate transport phenomena in heterogeneous media. However, the first-order derivative term is spatially non-self-adjoint, which introduces additional nonsymmetric terms into the error equation. This property greatly complicates the stability and convergence analysis for \eqref{Equation}. To address these difficulties, we propose an integrating-factor transformation that converts the tempered fractional derivative into the standard Caputo derivative and eliminates the first-order derivative term. 

As mentioned in \cite{stynes2017}, solutions of time-fractional diffusion equations often exhibit weak singularity at the initial time. This has motivated the development of numerical methods on nonuniform temporal meshes, with the aim of recovering optimal convergence rates in the presence of such initial weak singularity. Moreover, these numerical methods may be broadly classified into three main categories: L1, L2-1\(_\sigma\), and L2 methods. For instance, in \cite{morgado2019}, Morgado et al. applied the L1 method to a time-fractional advection-diffusion equation, and obtained a temporal convergence rate of order \(\min\{2-\alpha,r\alpha\}\) on graded meshes in the \(L^2\)-norm. Subsequently, Kopteva~\cite{Kopteva2019} analyzed the L1 method on graded meshes for two- and three-dimensional time-fractional diffusion equations, and established a temporal accuracy of order \(\min\{2-\alpha,r\alpha\}\) in the \(L^\infty\)-norm. For the L2-1\(_\sigma\) method,
Chen et al.~\cite{chen2019} 
derived a temporal convergence rate of order \(\min\{2,r\alpha\}\) in the \(L^2\)-norm based on a class of nonuniform temporal meshes. In \cite{zhu2019}, Zhu et al. developed and analyzed a L2 method for time-fractional diffusion equations, achieving order $3-\alpha$ in the $H^1$-norm. Notably, the L1 method is computationally simpler and requires weaker regularity than the L2-1$_\sigma$ and L2 methods. To further extend the application of the L1 method, Ji et al.~\cite{ji2020} proposed an averaged L1 method to improve its convergence order, achieving a second-order temporal convergence rate in the \(L^2\)-norm. In \cite{shen2023}, Shen et al. applied the averaged L1 method to time-fractional diffusion equations and obtained a second-order temporal convergence rate in the \(L^\infty\)-norm for \(\alpha\ge (5-\sqrt{17})/2\). Later, Zheng and Wang~\cite{zheng2024} developed a fast averaged L1 method for a time-fractional mobile--immobile diffusion problem and proved a temporal convergence rate of order \(\min\{2,3-2\alpha\}\) in the \(L^2\)-norm, where the fast averaged L1 method shows significant advantages in improving computational efficiency. Moreover, more interesting developments can be seen in \cite{huang2016,huang2018,liao2018,lin2007,wang2018,wang2023,ye2024,Zayernouri2024}. However, higher-order convergence analysis for the fast averaged L1 method has not been reported in the published literature. This motivates the current work, and the main contributions and distinctive features in this paper are summarized as follows:
\begin{itemize}[leftmargin=1.5em,itemsep=0.3em,topsep=0pt,partopsep=0pt,parsep=0pt]
\item An efficient integrating-factor transformation is introduced for \eqref{Equation}, which greatly facilitates the construction of the discretization scheme and the subsequent theoretical analysis;  
\item We derive estimates for the discrete coefficients and the truncation error of the fast averaged L1 method on a graded temporal mesh. These estimates are key to the rigorous theoretical analysis of a fast temporal semidiscrete scheme for \eqref{Equation}. 
\item We establish second-order $H^1$-norm convergence for a full discretization scheme of \eqref{Equation}. This theoretical result improves upon that in \cite{cao2020}.
\end{itemize}

The remainder of this paper is organized as follows. Section~\ref{sec:AL1} introduces an integrating-factor transformation to simplify \eqref{Equation} and presents the fast temporal discretization. Section~\ref{sec:analysis} provides the stability and $H^1$-norm convergence analysis for the fast temporal semidiscrete scheme. Section~\ref{sec:spectral_err} presents the full discretization scheme using the spectral collocation method in space and establishes its second-order convergence in the $H^1$-norm. Section~\ref{sec:numerical} validates the theoretical results with several numerical experiments. Section~\ref{sec:conclude} concludes the paper.

\section
{An integrating-factor transformation for \eqref{Equation} and the fast temporal discretization}
\label{sec:AL1}

In this section, we introduce an integrating-factor transformation for \eqref{Equation} to facilitate the error estimates in the following sections. We then present a fast temporal discretization of \eqref{Equation} using the fast averaged L1 method.

For \eqref{Equation}, the fast averaged L1 method cannot be applied directly to discrete ${}_0^C D_t^{\alpha,\lambda}u$ due to the presence of the parameter $\lambda$. We therefore introduce an integrating-factor transformation:
\begin{equation}\label{eq:transform}
v(\boldsymbol{x},t)=e^{\lambda t-\frac{1}{2}\sum\limits_{j=1}^{d}x_j}\,u(\boldsymbol{x},t).
\end{equation}
As a result, the Caputo tempered derivative becomes 
\vspace{-0.1cm}
\begin{equation}\label{tran_frac} 
{}_0^C D_t^{\alpha,\lambda}u
= e^{-\lambda t}\,{}_0^C D_t^\alpha(e^{\lambda t}u)
= e^{-\lambda t+\frac12\sum\limits_{j=1}^d x_j}\,{}_0^C D_t^\alpha v.
\end{equation}
The standard Caputo derivative acting on $v$ is then obtained. This allows us to construct the temporal discretization for ${}_0^C D_t^\alpha v$ rather than for ${}_0^C D_t^{\alpha,\lambda}u$. Based on \eqref{eq:transform}, we further derive
\begin{equation}\label{eq2026051204}
\partial_{x_j}u
=
e^{-\lambda t+\frac12\sum\limits_{j=1}^d x_j}
\bigl(\partial_{x_j}v+\tfrac{v}{2}\bigr),\qquad
\partial_{x_jx_j}u
=
e^{-\lambda t+\frac12\sum\limits_{j=1}^d x_j}
\bigl(\partial_{x_jx_j}v+\partial_{x_j}v+\tfrac{v}{4}\bigr). 
\end{equation}
Substituting \eqref{tran_frac}-\eqref{eq2026051204} into \eqref{Equation} yields
\vspace{0.1cm}
\begin{equation}\label{eq:v-problem}
\begin{cases}
\partial_t v+{}_0^C D_t^\alpha v
= \Delta v+\mu v+\widetilde f(\boldsymbol{x},t),
\quad &(\boldsymbol{x},t)\in \Omega\times(0,T],\\[3mm]
v(\boldsymbol{x},t)=0,
\quad &(\boldsymbol{x},t)\in \partial\Omega\times(0,T],\\
v(\boldsymbol{x},0)=\widetilde\phi(\boldsymbol{x})
:=e^{-\frac12\sum\limits_{j=1}^d x_j}\phi(\boldsymbol{x}),
\quad &\boldsymbol{x}\in\Omega,
\end{cases}
\end{equation}
where
\begin{equation*}
\mu:=\lambda-\frac{d}{4}, \qquad
\widetilde f(\boldsymbol{x},t)
:=
e^{\lambda t-\frac12\sum\limits_{j=1}^d x_j}
f(\boldsymbol{x},t).   \vspace{-0.05cm}  
\end{equation*}
\begin{remark}
{\em Based on the integrating-factor transformation \eqref{eq:transform},
\eqref{eq:v-problem} has two significant advantages in comparison with \eqref{Equation}: 1) the tempered fractional derivative ${}_0^C D_t^{\alpha,\lambda}u$ is converted to the standard Caputo derivative ${}_0^C D_t^\alpha v$, and it is more suitable for discretization using the fast averaged L1 method; 2) Compared with \eqref{Equation}, the advection term in \eqref{eq:v-problem} is eliminated. This greatly facilitates the subsequent stability and convergence analysis.}
\end{remark}
Next we focus on the fast temporal discretization for \eqref{eq:v-problem}. As noted in \cite[Chapter~6]{Brunner2004}, the graded temporal mesh is known to be more effective than the uniform mesh for computing the time-fractional advection-dispersion equation. Motivated by this, we employ the following graded temporal mesh:
\begin{equation}\label{eq:graded_mesh_fast}
t_n = T (\frac{n}{M})^{r}, \qquad n=0,1,\dots,M,\vspace{0.15cm}
\end{equation}
where the grading parameter $r\ge1$. To facilitate the subsequent derivation, we define
\vspace{0.1cm}
\begin{equation*}
\tau_n := t_n-t_{n-1},\qquad
\nabla_\tau v^n := \tfrac{1}{\tau_n}{(v^n-v^{n-1})},\qquad
v^{n-\frac12}:= \tfrac{1}{2}(v^n+v^{n-1}).\vspace{0.1cm}
\end{equation*}
Integrating both sides of \eqref{eq:v-problem} over $[t_{n-1}, t_n]$ and taking the average, we obtain
\vspace{0.1cm}
\begin{equation} \label{average} 
\nabla_\tau v^n
    +
    \frac{1}{\tau_n}\int_{t_{n-1}}^{t_n} {}_0^C D_t^{\alpha} v\, dt
	=
    \frac{1}{\tau_n}\int_{t_{n-1}}^{t_n} (\Delta v + \mu v)\, dt
    +
    \frac{1}{\tau_n}
\int_{t_{n-1}}^{t_n}
\widetilde f(\boldsymbol{x},t)\,dt,\quad 1\le n\le M. \vspace{0.1cm}
\end{equation}
 To discretize $\frac{1}{\tau_n}\int_{t_{n-1}}^{t_n} {}_0^C D_t^{\alpha} v\, dt$ in \eqref{average}, we use the fast averaged L1 approximation presented in \cite{zheng2024} as follows:
 \vspace{-0.05cm}
\begin{equation}\label{eq:fast_avgL1}
\frac{1}{\tau_n}\int_{t_{n-1}}^{t_n}{}_0^C D_t^\alpha v \,dt  \approx \sum\limits_{k=1}^n \widetilde a_{n,k}\nabla_\tau v^k,
\end{equation}
\begin{equation}\label{eq:fastavgL1_coeff}   
\begin{cases}
		\widetilde a_{n,k}
		=
		\frac{1}{\tau_n\Gamma(1-\alpha)}
		\int_{t_{n-1}}^{t_n}
		\int_{t_{k-1}}^{t_k}
		\sum\limits_{\ell=1}^{N_{\exp}}\varpi_\ell e^{-s_\ell(t-s)}
		\,ds\,dt,
		& 1\le k\le n-2,\\[14pt]
		\widetilde a_{n,k}=\frac{1}{\tau_n\Gamma(1-\alpha)}
	\int_{t_{n-1}}^{t_n}
	\int_{t_{k-1}}^{\min\{t,t_k\}}
	(t-s)^{-\alpha}\,ds\,dt,
		& k=n-1,n.
	\end{cases}\vspace{0.15cm}
\end{equation}
where $\Gamma(\cdot)$ denotes the gamma function. Moreover, as shown in \cite{jiang2017,zhu2019},
$N_{\exp}$ represents the number of exponential terms, $\varpi_\ell>0$ and $s_\ell>0$ ($\ell =1, 2, \ldots$) are the SOE weights and exponential parameters, respectively. For \eqref{eq:fast_avgL1}, we denote $\bar{\partial}_{\tau,\varepsilon}^\alpha v^n := \sum\limits_{k=1}^n \widetilde a_{n,k}\nabla_\tau v^k$, and rewrite it as 
\vspace{-0.1cm}
\begin{equation}\label{eq2026051203}
\bar{\partial}_{\tau,\varepsilon}^\alpha v^n = \frac{1}{\tau_n\Gamma(1-\alpha)}\sum\limits_{\ell=1}^{N_{\exp}}\varpi_\ell\mathcal{F}_\ell^n(v^n) + \sum\limits_{k=n-1}^n \widetilde a_{n,k}\nabla_\tau v^k,
\end{equation}
where
\begin{equation*}
\mathcal{F}_\ell^n(v^n)=\sum\limits_{k=1}^{n-2} \nabla_{\tau} v^k \int_{t_{n-1}}^{t_n} \int_{t_{k-1}}^{t_k} e^{-s_\ell(t-s)} \,ds\,dt. \vspace{0.15cm}
\end{equation*}
It is worth noting that $\mathcal{F}_\ell^n(v^n)$ can be evaluated recursively via
\begin{equation*}
\mathcal{F}_\ell^n(v^n) = \frac{\tau_{n-1}}{\tau_n}e^{-s_\ell\tau_{n}}\mathcal{F}_\ell^{n-1}(v^{n-1})+\nabla_{\tau} v^{n-2} \int_{t_{n-1}}^{t_n} \int_{t_{n-3}}^{t_{n-2}} e^{-s_\ell(t-s)} \,ds\,dt,\quad 4\leq n\leq M.  
\end{equation*}

\begin{remark}\label{remark1}
{\em Compared with \eqref{eq:fast_avgL1} or \eqref{eq2026051203}, the classical averaged L1 approximation, as shown in \cite{ji2020}, is 
\begin{equation}\label{eq2026061201}
\frac{1}{\tau_n}\int_{t_{n-1}}^{t_n} {}_0^C D_t^{\alpha} v\, dt \approx \bar{\partial}_\tau^\alpha v^n := \sum\limits_{k=1}^{n} a_{n,k}\nabla_\tau v^k, 
\end{equation}
where
\begin{equation*}
a_{n,k} = \frac{1}{\tau_n\Gamma(1-\alpha)}
\int_{t_{n-1}}^{t_n}
\int_{t_{k-1}}^{\min\{t,t_k\}}(t-s)^{-\alpha}\,ds\,dt,
\quad 1\le k\le n. \vspace{0.15cm}
\end{equation*}
$\bar{\partial}_{\tau,\varepsilon}^\alpha v^n$ in \eqref{eq2026051203} offers a fast advantage over $\bar{\partial}_\tau^\alpha v^n$ in \eqref{eq2026061201}, i.e., only $\mathcal{O}(1)$ operations are required to update $\mathcal{F}_\ell^{\,n}(v^{n})$ given $\mathcal{F}_\ell^{\,n-1}(v^{n-1})$. Furthermore, as noted in \cite{zheng2024}, for sufficiently large $M$, the storage requirement and computational cost for evaluating $\bar{\partial}_{\tau,\varepsilon}^\alpha v^n$ are significantly lower than those for evaluating $\bar{\partial}_\tau^\alpha v^n$. Specifically, evaluating $\bar{\partial}_{\tau,\varepsilon}^\alpha v^n$ requires $\mathcal{O}(\log M)$ storage and $\mathcal{O}(M\log M)$ operations, whereas evaluating $\bar{\partial}_\tau^\alpha v^n$ requires $\mathcal{O}(M)$ storage and $\mathcal{O}(M^2)$ operations. }
\end{remark}

Now we focus on local truncation errors (i.e., $R_1^n$ and $R_2^n$) from the fast averaged L1 approximation and the trapezoidal rule as follows:
\begin{equation}\label{eq:error1}
R_1^n := \bar \partial_{\tau,\varepsilon}^\alpha v^n  - \frac{1}{\tau_n}\int_{t_{n-1}}^{t_n} {}_0^C D_t^{\alpha} v\, dt
, 
\end{equation}
\vspace{-0.15cm}
\begin{equation}\label{eq:error2}
R_2^n := \frac{1}{\tau_n}\int_{t_{n-1}}^{t_n} (\Delta v + \mu v)\, dt- (
\Delta v^{n-\frac{1}{2}}
+ \mu v^{n-\frac{1}{2}})
.  \vspace{0.1cm}
\end{equation}
To facilitate the presentation of an estimate for $R_1^n$ and $R_2^n$, as well as the analysis in the subsequent sections, the following preliminaries are provided.
\begin{itemize}[leftmargin=2em]
    \item The space $L^2(\Omega)$ is defined by
    \begin{equation*}
    L^2(\Omega):=\left\{u: u \text { is measurable on } \Omega \text { and }\|u\|<\infty\right\},
    \end{equation*}
    equipped with the norm and inner product
    \begin{equation}\label{l2_norm}
        \| u\|=(u,u)^{\frac{1}{2}},\qquad (u,v)=\int_{\Omega}u({\bs x})v({\bs x})\,d{\bs x},\qquad \forall u,v\in L^2(\Omega).
    \end{equation}
    \item  The space $H^m(\Omega)$ with $m \in \mathbb{N}$ is the space of functions $u \in L^2(\Omega)$ such that all the distributional derivatives of order up to $m$ can be represented by functions in $L^2(\Omega)$. That is,
    \begin{equation*}
        H^m(\Omega)=\left\{u \in L^2(\Omega): D^{\boldsymbol{\alpha}} u \in L^2(\Omega) \;\;\mathrm { for }\; \;0 \leq|\boldsymbol{\alpha}| \leq m\right\},
    \end{equation*}
    equipped with the norm 
\begin{equation}\label{hm_norm}
    \|u\|_{m}=\left(\sum_{|\boldsymbol{\alpha}|=0}^m\left\|D^{\boldsymbol{\alpha}} u\right\|^2\right)^{1 / 2},\qquad D^{\boldsymbol{\alpha}}=\frac{\partial^{|\boldsymbol{\alpha}|}}{\partial x_1^{\alpha_1} \ldots \partial x_d^{\alpha_d}},
    \end{equation}
    where  \(\boldsymbol{\alpha}=(\alpha_1,\ldots,\alpha_d)\) is a multiindex of order  $|\boldsymbol{\alpha}| =\sum
    _{i=1}^d \alpha_i$. Moreover, 
    \begin{equation*}
        H_0^1(\Omega)= \left\{u \in H^1(\Omega): u|_{\partial \Omega}=0\right\}.
    \end{equation*} 
\end{itemize}

An estimate for $R_1^n$ and $R_2^n$, given in \eqref{eq:error1} and \eqref{eq:error2}, is presented in the following lemma.
\begin{lemma}[\cite{shen2023,zheng2024}]\label{average_lem}
Suppose that $C>0$ is a constant, $\delta\ge\alpha$ and that for $0<t\le T$,
\begin{equation*}
\|\partial_t^\ell v(t)\|_{m} \le C\bigl(1+t^{\delta-\ell}\bigr),\qquad \ell=0,1,2,\quad m\ge 1.
\end{equation*}
Then, for $1\le n\le M$, there exists a positive constant $C_1$, independent of $M$, such that
\begin{equation}\label{Psi} 
\|R_1^n\|_{m} \le C_1 
\begin{cases}
\varepsilon + M^{-2} t_n^{\delta-\alpha-\frac{2}{r}},
& r(1+\delta)>2, \\[2mm]
\varepsilon + M^{-r(1+\delta)} t_n^{-1-\alpha} \ln(n+1),
& r(1+\delta)=2, \\[2mm]
\varepsilon + M^{-r(1+\delta)} t_n^{-1-\alpha},
& r(1+\delta)<2,
\end{cases} \vspace{0.15cm} 
\end{equation}
\begin{equation}\label{average_e2}
\|R_2^n\|_{m} \le C_1
\begin{cases}
M^{-r(\delta+1)}, & n=1,\\[2mm]
M^{-2} t_n^{\delta-\frac{2}{r}}, & n\ge 2,
\end{cases} \vspace{0.15cm}
\end{equation}
where $\varepsilon>0$ is a prescribed tolerance and $\|\cdot\|_m$ is defined in \eqref{hm_norm}. 
\end{lemma}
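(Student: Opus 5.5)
We split $R_1^n$ into an SOE error and the error of the classical averaged L1 operator:
\begin{equation*}
R_1^n=\bigl(\bar\partial_{\tau,\varepsilon}^\alpha v^n-\bar\partial_\tau^\alpha v^n\bigr)+\Bigl(\bar\partial_\tau^\alpha v^n-\frac{1}{\tau_n}\int_{t_{n-1}}^{t_n}{}_0^C D_t^\alpha v\,dt\Bigr)=:R_{1,1}^n+R_{1,2}^n,
\end{equation*}
with $\bar\partial_\tau^\alpha$ as in Remark~\ref{remark1}. Since $D^{\boldsymbol{\alpha}}$ commutes with all temporal operations, it suffices to prove each estimate for $w=D^{\boldsymbol{\alpha}}v$, $|\boldsymbol{\alpha}|\le m$, in the $L^2$-norm; the hypothesis gives $\|\partial_t^\ell w(t)\|\le C(1+t^{\delta-\ell})$.

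\textbf{SOE term.} $R_{1,1}^n$ involves only the indices $k\le n-2$, and there
\begin{equation*}
R_{1,1}^n=\frac{1}{\tau_n\Gamma(1-\alpha)}\int_{t_{n-1}}^{t_n}\int_0^{t_{n-2}}\Bigl(\sum_\ell\varpi_\ell e^{-s_\ell(t-s)}-(t-s)^{-\alpha}\Bigr)\Pi v'(s)\,ds\,dt,
\end{equation*}
where $\Pi v'$ is the piecewise constant function equal to $\nabla_\tau v^k$ on $(t_{k-1},t_k)$. Here $t-s\ge\tau_{n-1}\ge\Delta t_{\min}$. The SOE approximation of \cite{jiang2017} gives $|\sum_\ell\varpi_\ell e^{-s_\ell x}-x^{-\alpha}|\le\varepsilon$ for $x\in[\Delta t_{\min},T]$. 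Hence
\begin{equation*}
\|R_{1,1}^n\|\le C\varepsilon\sum_k\tau_k\|\nabla_\tau w^k\|\le C\varepsilon\int_0^T\|w'\|\,ds\le C\varepsilon,
\end{equation*}
because $t^{\delta-1}$ is integrable.

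\textbf{Averaged L1 term.} Write
\begin{equation*}
R_{1,2}^n=\frac{1}{\tau_n\Gamma(1-\alpha)}\int_{t_{n-1}}^{t_n}\int_0^t(t-s)^{-\alpha}\bigl(\Pi v'-v'\bigr)(s)\,ds\,dt.
\end{equation*}
On each $(t_{k-1},t_k)$, integrate by parts in $s$, using the piecewise linear interpolant error $\theta=\Pi_1v-v$, which vanishes at the nodes. For $k\le n-1$ this leaves
\begin{equation*}
\alpha\int (t-s)^{-\alpha-1}\theta(s)\,ds .
\end{equation*}
The innermost piece $s\in(t_{n-1},t)$ is handled directly. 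Then:
\begin{itemize}
\item For $k=1$, use $\|\theta\|\le C\tau_1^{1+\delta}$ and $t_1\sim M^{-r}$.
\item For $k\ge2$, use $\|\theta\|\le C\tau_k^2t_{k-1}^{\delta-2}$ together with the mesh facts $\tau_k\le CM^{-1}t_k^{1-1/r}$ and $t_k\le 2^rt_{k-1}$.
\end{itemize}
The key gain, which produces the sharper $t_n^{-\alpha}$ dependence, comes from averaging in $t$. On $[t_{n-1},t_n]$ the kernel $(t-s)^{-\alpha}$ integrated against the locally quadratic error gives a midpoint-type cancellation: the first-order part of $\theta$ integrates against a smooth weight. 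The resulting sum over $k$ is
\begin{equation*}
\sum_{k\ge2}M^{-2}t_k^{\delta-2/r}\,\tau_k\,(t_n-t_k)^{-1-\alpha}\quad\text{(suitably regularized)}.
\end{equation*}
Its behaviour splits by the exponent of $t_k$ in $\sum_k\tau_k t_k^{\delta-2/r}\cdots$:
\begin{itemize}
\item if $r(1+\delta)>2$, the sum is dominated by large $k$ and gives $M^{-2}t_n^{\delta-\alpha-2/r}$;
\item if $r(1+\delta)<2$, the sum is dominated by $k$ near $1$ and gives $M^{-r(1+\delta)}t_n^{-1-\alpha}$;
\item if $r(1+\delta)=2$, the sum is borderline and produces an extra $\ln(n+1)$.
\end{itemize}
This reproduces the three cases in \eqref{Psi}, and is essentially the computation of \cite{shen2023}, which we follow.

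\textbf{Estimate for $R_2^n$.} This is the trapezoidal-rule error for $g=\Delta v+\mu v$, namely
\begin{equation*}
R_2^n=-\frac{1}{2\tau_n}\int_{t_{n-1}}^{t_n}(t-t_{n-1})(t_n-t)\,g''(t)\,dt .
\end{equation*}
\begin{itemize}
\item For $n\ge2$: $\|R_2^n\|\le C\tau_n^2t_{n-1}^{\delta-2}\le CM^{-2}t_n^{\delta-2/r}$.
\item For $n=1$: use instead $\|R_2^1\|\le C\int_0^{t_1}\|g'\|\le Ct_1^{\delta}$. This alone is only $M^{-r\delta}$, so integrate by parts once more with the weight $s(t_1-s)$ to reach $Ct_1^{1+\delta}\cdot t_1^{-1}$. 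I flag that this step requires the $n=1$ bound to be read with the available regularity, since the stated $M^{-r(\delta+1)}$ seems to need care.
\end{itemize}
The norm $\|\cdot\|_m$ of $\Delta v$ requires $m+2$ spatial derivatives, which we assume are covered by the hypothesis.

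\textbf{Main obstacle.} The hard step is the uniform-in-$n$ bound for $R_{1,2}^n$ with the sharp $t_n^{\delta-\alpha-2/r}$ weight. This requires the cancellation from averaging near $k=n-1,n$ and careful summation over the graded mesh.
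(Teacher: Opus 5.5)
\paragraph{A gap in the averaged L1 part.}
The paper does not prove this lemma; it quotes it from \cite{shen2023,zheng2024}, so your outline has to stand on its own. Your treatment of the SOE part $R_{1,1}^n$ and of $R_2^n$ for $n\ge2$ is correct. The averaged L1 part $R_{1,2}^n$, which is the real content of the lemma, has a genuine gap: the summation you display does not give what you claim. On the cells with $t_n/2<t_k\le t_{n-2}$ the graded mesh is quasi-uniform, with $\tau_k\sim\tau_n$ and $t_n-t_k\sim(n-k)\tau_n$, so
\begin{equation*}
\sum_{t_n/2<t_k\le t_{n-2}}\tau_k\,(t_n-t_k)^{-1-\alpha}\sim\tau_n^{-\alpha}\sum_{j\ge2}j^{-1-\alpha}\sim\tau_n^{-\alpha}.
\end{equation*}
Hence your termwise bounds $\int_{t_{k-1}}^{t_k}\|\theta\|\,ds\le C\tau_k^3t_{k-1}^{\delta-2}$ only produce $M^{-2}t_n^{\delta-2/r}\tau_n^{-\alpha}$. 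That is the ordinary L1 rate, and it exceeds the target $M^{-2}t_n^{\delta-\alpha-2/r}$ by the factor $(t_n/\tau_n)^{\alpha}\sim n^{\alpha}$. Your three-case discussion is valid only for the range $t_k\le t_n/2$.

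The ``midpoint-type cancellation of the first-order part of $\theta$'' is not the mechanism, since $\theta$ is already second order. What you need is the structure of the averaged operator. Because $\theta(0)=0$,
\begin{equation*}
R_{1,2}^n=\frac{1}{\tau_n\Gamma(1-\alpha)}\Bigl[\int_0^{t_n}(t_n-s)^{-\alpha}\theta(s)\,ds-\int_0^{t_{n-1}}(t_{n-1}-s)^{-\alpha}\theta(s)\,ds\Bigr].
\end{equation*}
In this difference, the $O(\tau^{2-\alpha})$ contributions of corresponding cells cancel whenever $\theta$ on $[t_{k-1},t_k]$ is nearly a rescaled translate of $\theta$ on $[t_{k-2},t_{k-1}]$. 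For $v=t^2$ on a uniform mesh the cancellation is exact, and
\begin{equation*}
R_{1,2}^n=\frac{1}{\tau\Gamma(1-\alpha)}\int_0^{\tau}(t_n-s)^{-\alpha}s(\tau-s)\,ds\approx\frac{\tau^2t_n^{-\alpha}}{6\Gamma(1-\alpha)}.
\end{equation*}
Quantifying ``nearly'' means controlling the cell-to-cell variation of $\partial_t^2v$ and of $\tau_k$, which requires $\partial_t^3v$.

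\paragraph{The stated hypothesis is too weak.}
This is not a removable technicality: with only $\ell=0,1,2$, the bound on $R_1^n$ is false. Consider the functional $w''\mapsto R_{1,2}^n$. Its Peano kernel is nonnegative on $[t_{n-1},t_n]$ with integral $c_\alpha\tau_n^{2-\alpha}$ there, so its norm on $C[0,T]$ is at least $c_\alpha\tau_n^{2-\alpha}$. By the uniform boundedness principle there is $w\in C^2[0,T]$ with $w(0)=w'(0)=0$, which satisfies the hypothesis for every $\delta$, such that for $v=w(t)\psi(\boldsymbol{x})$ we get $\sup_M M^2\|R_{1,2}^{n}\|_m=\infty$ at $n=M$. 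The claimed bound at $t_n=T$, by contrast, is $O(\varepsilon+M^{-2})$ for arbitrary $\varepsilon$. So you must add a hypothesis such as $\|\partial_t^3v\|_m\le C(1+t^{\delta-3})$ and carry out the cell-to-cell cancellation with it.

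\paragraph{Two smaller points.}
For $k=1$, the pointwise bound $\|\theta\|\le C\tau_1^{1+\delta}$ is wrong: for $v=t^\delta$ with $\delta<1$, $|\theta(t_1/2)|=(2^{-\delta}-\tfrac12)t_1^{\delta}$. The bound you actually need, $\int_0^{t_1}\|\theta\|\,ds\le Ct_1^{1+\delta}$, does hold.

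Your doubt about $R_2^1$ is justified, but no further integration by parts can rescue it, because the stated bound is false. For $v=t^\delta\psi(\boldsymbol{x})$ one gets $R_2^1=\frac{1-\delta}{2(1+\delta)}t_1^{\delta}(\Delta\psi+\mu\psi)$, so $\|R_2^1\|_m\asymp M^{-r\delta}$ for $\delta\neq1$. The correct statement is $\|R_2^1\|_m\le Ct_1^{\delta}$, equivalently $\tau_1\|R_2^1\|_m\le CM^{-r(1+\delta)}$. That is all Lemma~\ref{lem:accumulated_R} needs when $r\ge4$.
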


\section{Stability and $H^1$-norm convergence analysis for a semidiscrete scheme}\label{sec:analysis}

In this section, we focus on the stability and $H^1$-norm convergence analysis for a semidiscrete scheme. Based on Lemma \ref{average_lem}, $\frac{1}{\tau_n}\int_{t_{n-1}}^{t_n} {}_0^C D_t^{\alpha} v\, dt$ and $\frac{1}{\tau_n}\int_{t_{n-1}}^{t_n} (\Delta v + \mu v)\, dt$ given in \eqref{eq:error1}-\eqref{eq:error2} are well approximated by $\bar \partial_{\tau,\varepsilon}^\alpha v^n$ and $\Delta v^{n-\frac{1}{2}}
+ \mu v^{n-\frac{1}{2}}$. Therefore, \eqref{average} becomes 
\begin{equation}\label{semi} 
		\nabla_\tau v^n+\bar{\partial}_{\tau,\varepsilon}^\alpha v^n
		= \Delta v^{n-\frac{1}{2}}+\mu v^{n-\frac{1}{2}}+\bar{f}^{n-\frac{1}{2}},
		\quad 1\le n\le M,\vspace{0.1cm}
\end{equation}
where $
\bar{f}^{\,n-\frac{1}{2}}
:=
\frac{1}{\tau_n}
\int_{t_{n-1}}^{t_n}
\widetilde f(\boldsymbol{x},t)\,dt$.
Here \eqref{semi} is called the semidiscrete scheme. We start with the following two lemmas, which will be used in the subsequent stability and $H^1$-norm convergence analysis for \eqref{semi}.

\begin{lemma}\label{coe2} Suppose that $\varepsilon \le c_{\alpha,r} T^{-\alpha}M^{-\alpha}$ and 
\vspace{0.1cm}
\begin{equation}\label{set1}
    \frac{(2-\alpha) 2^{1-\alpha}-1}{\Gamma(3-\alpha)} \left(\frac{rT}{M}\right)^{1-\alpha}+C^*\frac{rT}{M} \le 1,\vspace{0.1cm}
\end{equation}
then the following inequality holds:
\begin{equation}\label{eq:coe2}
\frac{\widetilde a_{n,k}}{\tau_{k}}\le
\begin{cases}
    \frac{\widetilde a_{n,k+1}}{\tau_{k+1}},\quad &1\le k\le n-2, \\[10pt]
    \frac{1}{\tau_n} - C^* + \frac{\widetilde a_{n,k+1}}{\tau_{k+1}} ,\quad&k = n-1,
\end{cases}
\end{equation}
where \(\varepsilon>0\) denotes a prescribed tolerance, the same as that in \eqref{Psi}, \(C^*\) is an arbitrarily prescribed real number, and \(c_{\alpha,r}\) is a positive constant depending only on \(\alpha\) and \(r\).
\end{lemma}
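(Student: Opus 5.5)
We prove the two cases of \eqref{eq:coe2} separately, working directly from the definition \eqref{eq:fastavgL1_coeff}. We write $a_{n,k}$ for the exact averaged L1 coefficients of Remark \ref{remark1}, so that $\widetilde a_{n,k}=a_{n,k}$ for $k=n-1,n$. For $1\le k\le n-2$ the kernel $(t-s)^{-\alpha}$ is replaced by its SOE approximation $\sum_\ell \varpi_\ell e^{-s_\ell(t-s)}$. We use the standard SOE property \cite{jiang2017}
\[
\Big|(t-s)^{-\alpha}-\sum_{\ell=1}^{N_{\exp}}\varpi_\ell e^{-s_\ell (t-s)}\Big|\le \varepsilon
\]
for $t-s\ge \tau_{\min}$. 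Note that each SOE summand is a positive, decreasing, completely monotone function of $t-s$.

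\textbf{Case $1\le k\le n-3$.} Here both $\widetilde a_{n,k}$ and $\widetilde a_{n,k+1}$ use the SOE kernel. Define
\[
K(t-s)=\sum_{\ell=1}^{N_{\exp}}\varpi_\ell e^{-s_\ell(t-s)}.
\]
Then
\[
\frac{\widetilde a_{n,k}}{\tau_k}=\frac{1}{\tau_n\Gamma(1-\alpha)}\int_{t_{n-1}}^{t_n}\frac{1}{\tau_k}\int_{t_{k-1}}^{t_k}K(t-s)\,ds\,dt,
\]
which is the average of $K(t-\cdot)$ over $[t_{k-1},t_k]$, integrated in $t$. Since $K(t-s)$ is increasing in $s$, its average over $[t_{k-1},t_k]$ is at most its average over $[t_k,t_{k+1}]$, because every point of the latter interval lies to the right of every point of the former. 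This gives the first inequality exactly, with no $\varepsilon$ needed.

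\textbf{Case $k=n-2$.} The left side uses the SOE kernel, while $\widetilde a_{n,n-1}=a_{n,n-1}$ uses the exact kernel. Bound $K\le (t-s)^{-\alpha}+\varepsilon$ to get
\[
\frac{\widetilde a_{n,n-2}}{\tau_{n-2}}\le \frac{a_{n,n-2}}{\tau_{n-2}}+\frac{\varepsilon}{\Gamma(1-\alpha)}.
\]
By the same monotonicity argument applied to $(t-s)^{-\alpha}$, we have $a_{n,n-2}/\tau_{n-2}$ bounded by the average of $(t-s)^{-\alpha}$ over $s\in[t_{n-2},t_{n-1}]$. The gap between this average and $a_{n,n-1}/\tau_{n-1}$, which also contains the singular part near $s=t$, is strictly positive. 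We would need to quantify this gap, i.e.\ show
\[
\frac{a_{n,n-1}}{\tau_{n-1}}-\frac{a_{n,n-2}}{\tau_{n-2}}\ge c\,\tau_n^{-\alpha}\ \text{(times a mesh-ratio factor)} \ \ge c_{\alpha,r}'(T^{-1}M)^{\alpha}\cdot\text{const}.
\]
Using $\tau_n\le rT/M$ on the graded mesh \eqref{eq:graded_mesh_fast}, the hypothesis $\varepsilon\le c_{\alpha,r}T^{-\alpha}M^{-\alpha}$ should then absorb the $\varepsilon$ term once $c_{\alpha,r}$ is chosen small. This is the main obstacle: a clean lower bound on the gap uniform in $n$ on the graded mesh. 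If a direct $\tau^{-\alpha}$ gap is awkward, we would first try the mean value theorem on $(t-s)^{-\alpha}$ together with the local quasi-uniformity $\tau_{n-1}/\tau_n\in[2^{-r},1]$.

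\textbf{Case $k=n-1$.} All coefficients are now exact, and we compute them explicitly. With $\rho=\tau_{n-1}/\tau_n$,
\[
\frac{a_{n,n}}{\tau_n}=\frac{\tau_n^{-\alpha}}{\Gamma(3-\alpha)\,\tau_n}\cdot\frac{1}{1}\,,\qquad
a_{n,n-1}=\frac{1}{\tau_n\Gamma(3-\alpha)}\Big[(\tau_n+\tau_{n-1})^{2-\alpha}-\tau_n^{2-\alpha}-\tau_{n-1}^{2-\alpha}\Big].
\]
The goal $\frac{a_{n,n-1}}{\tau_{n-1}}\le \frac1{\tau_n}-C^*+\frac{a_{n,n}}{\tau_n}$ is equivalent, after multiplying by $\tau_n$, to
\[
\frac{a_{n,n-1}\tau_n}{\tau_{n-1}}-a_{n,n}\le 1-C^*\tau_n.
\]
Expanding, the left side equals $\frac{\tau_n^{1-\alpha}}{\Gamma(3-\alpha)}\,g(\rho)$ with
\[
g(\rho)=\rho^{-1}\big[(1+\rho)^{2-\alpha}-1-\rho^{2-\alpha}\big]-1 .
\]
We would show that $g$ is maximized at $\rho=1$, where $g(1)=(2-\alpha)2^{1-\alpha}\cdot$ (suitably normalized) $-1$, matching the constant in \eqref{set1}; this is a one-variable calculus check, and it is the other delicate point. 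Combining this with $\tau_n\le rT/M$ (from $t_n-t_{n-1}\le rT M^{-r}n^{r-1}$) and the hypothesis \eqref{set1} yields the claim when $C^*\ge0$. If $C^*<0$, the $-C^*\tau_n$ term is handled by the same bound on $\tau_n$.
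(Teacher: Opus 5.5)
Your three-case split is the paper's, and your Case~1 argument (monotonicity of the positive, decreasing SOE kernel) is exactly its argument. There are, however, two genuine gaps.

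\textbf{Case $k=n-2$.} You stop at the one step that carries the content of the lemma, and your account of where the gap comes from is wrong. For $t\in[t_{n-1},t_n]$ one has $\min\{t,t_{n-1}\}=t_{n-1}$. Hence $a_{n,n-1}/\tau_{n-1}$ is \emph{exactly} the $s$-average of $(t-s)^{-\alpha}$ over $[t_{n-2},t_{n-1}]$, and it contains no singular part; that part sits in $a_{n,n}$. The whole gap therefore comes from comparing the intervals $[t_{n-3},t_{n-2}]$ and $[t_{n-2},t_{n-1}]$, and it has to be quantified. The paper does this by bounding $(t-s)^{-\alpha}\le(t-t_{n-2})^{-\alpha}$ for $s\le t_{n-2}$ and rescaling $t=t_{n-1}+x$, $s=t_{n-1}-v\tau_{n-1}$. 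This gives
\[
\frac{a_{n,n-2}}{\tau_{n-2}}-\frac{a_{n,n-1}}{\tau_{n-1}}\le\frac{\tau_{n-1}^{-\alpha}}{\Gamma(1-\alpha)}\Big((\xi+1)^{-\alpha}-\int_0^1(\xi+v)^{-\alpha}\,dv\Big),\qquad \xi\in\Big[0,\frac{\tau_n}{\tau_{n-1}}\Big].
\]
The bracket is continuous and strictly negative, but it tends to $0$ as $\xi\to\infty$. A uniform bound $-c_{\alpha,r}$ therefore needs a mesh-ratio bound ($\tau_n/\tau_{n-1}\le 3^{r-1}$ for $n\ge3$) to confine $\xi$ to a compact set. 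With that in hand, $\tau_{n-1}\le rT/M$ and the hypothesis on $\varepsilon$ absorb the term $\varepsilon/\Gamma(1-\alpha)$.

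\textbf{Case $k=n-1$.} Your reduction to $g(\rho)$ is correct, though the displayed $a_{n,n}/\tau_n$ should read $\tau_n^{-\alpha}/\Gamma(3-\alpha)$. The planned calculus check, however, would fail.
\begin{itemize}
\item $g(1)=2^{2-\alpha}-3$, not $(2-\alpha)2^{1-\alpha}-1$.
\item $g$ is not maximized at $\rho=1$: $g(0^+)=1-\alpha>2^{2-\alpha}-3$ for every $\alpha\in(0,1)$.
\item On the graded mesh, $\rho=\tau_{n-1}/\tau_n$ drops to $1/(2^r-1)$ at $n=2$. For $\alpha=\tfrac12$, $r=4$ one gets $g(1/15)\approx0.27>g(1)\approx-0.17$.
\end{itemize}
You only need an upper bound, which is what the paper uses. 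By the mean value theorem, $(1+\rho)^{2-\alpha}-1=(2-\alpha)\zeta^{1-\alpha}\rho$ with $\zeta\in(1,1+\rho)\subset(1,2)$. Dropping $-\rho^{1-\alpha}$ then gives $g(\rho)\le(2-\alpha)2^{1-\alpha}-1$, which is precisely the constant in \eqref{set1}, and $\tau_n\le rT/M$ finishes the case.

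Your remark that $C^*<0$ is ``handled by the same bound'' is not justified. When $C^*<0$, \eqref{set1} at $\tau_n=rT/M$ does not control $\frac{(2-\alpha)2^{1-\alpha}-1}{\Gamma(3-\alpha)}\tau_n^{1-\alpha}+C^*\tau_n$ for smaller $\tau_n$. The paper's last step has the same restriction, and the lemma is only applied with $C^*=0$.
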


\begin{proof}
To present our proof more clearly, the cases $1\leq k \leq n-3$, $k=n-2$ and $k = n-1$ are proved separately as follows:

\noindent\textbf{Case 1 ($1\le k\le n-3$).} We denote $h(x)=\sum\limits_{\ell=1}^{N_{\exp}}\varpi_\ell e^{-s_\ell x}/\Gamma(1-\alpha)$. Using \eqref{eq:fastavgL1_coeff} and the mean value theorem, we derive 
\vspace{0.1cm}
\begin{equation}\label{LHS}
\begin{split}
    \frac{\widetilde{a}_{n,k}}{\tau_{k}}&=\frac{1}{\tau_n\tau_{k}}\int_{t_{n-1}}^{t_n}\int_{t_{k-1}}^{t_{k}} h(t-s)\,ds\,dt=\frac{1}{\tau_n}\int_{t_{n-1}}^{t_n} h(t-\xi_k)\,dt,\vspace{0.15cm}
\end{split}
\end{equation}
where $\xi_k\in [t_{k-1},t_{k}]$. Since $h(x)$ is monotonically nonincreasing, we obtain \eqref{eq:coe2} from \eqref{LHS} directly.

\noindent\textbf{Case 2 ($k = n-2$).} 
Based on \eqref{eq:fastavgL1_coeff} , we derive
\begin{equation}\label{4.41}
\begin{split}
    \tfrac{\widetilde{a}_{n,n-2}}{\tau_{n-2}}=\;&\frac{1}{\tau_n\tau_{n-2}\Gamma(1-\alpha)}\int_{t_{n-1}}^{t_n}\int_{t_{n-3}}^{t_{n-2}} \sum\limits_{\ell=1}^{N_{\exp}}\varpi_\ell e^{-s_\ell (t-s)}\,ds\,dt\\[5pt]
    =\;&\tfrac{1}{\tau_n\tau_{n-2}\Gamma(1-\alpha)}\int_{t_{n-1}}^{t_n}\int_{t_{n-3}}^{t_{n-2}}(\sum\limits_{\ell=1}^{N_{\exp}}\varpi_\ell e^{-s_\ell (t-s)}-(t-s)^{-\alpha})\,ds\,dt\\[5pt]
    \;&+\frac{1}{\tau_n\tau_{n-2}\Gamma(1-\alpha)}\int_{t_{n-1}}^{t_n}\int_{t_{n-3}}^{t_{n-2}} (t-s)^{-\alpha}\,ds\,dt.\\[3pt]
\end{split}
\end{equation}
As shown in \cite{jiang2017,zhu2019}, for a prescribed tolerance $\varepsilon>0$, $
|\sum\limits_{\ell=1}^{N_{\exp}} \varpi_\ell e^{-s_\ell (t-s)} -(t-s)^{-\alpha}|
\le \varepsilon$ holds for $t-s\in[\tau_1,T]$.  Applying this to \eqref{4.41} yields
\vspace{0.1cm}
\begin{equation}\label{549}
    \frac{\widetilde{a}_{n,n-2}}{\tau_{n-2}} \le \frac{\varepsilon}{\Gamma(1-\alpha)} +\frac{1}{\tau_n\tau_{n-2}\Gamma(1-\alpha)}\int_{t_{n-1}}^{t_n}\int_{t_{n-3}}^{t_{n-2}} (t-s)^{-\alpha}\,ds\,dt. \vspace{0.15cm}
\end{equation}
With \eqref{eq:fastavgL1_coeff} and \eqref{549}, it holds
\vspace{0.1cm}
\begin{equation}\label{528}
\begin{split}
    \frac{\widetilde{a}_{n,n-2}}{\tau_{n-2}}\le \;&\frac{\varepsilon}{\Gamma(1-\alpha)} +\frac{1}{\tau_n\tau_{n-2}\Gamma(1-\alpha)}\int_{t_{n-1}}^{t_n}\int_{t_{n-3}}^{t_{n-2}} (t-s)^{-\alpha}\,ds\,dt\\[5pt]
    &-\frac{1}{\tau_n\tau_{n-1}\Gamma(1-\alpha)}\int_{t_{n-1}}^{t_n}\int_{t_{n-2}}^{t_{n-1}} (t-s)^{-\alpha}\,ds\,dt+\frac{\widetilde{a}_{n,n-1}}{\tau_{n-1}}\\[5pt]
    \le\;&\frac{\varepsilon}{\Gamma(1-\alpha)} +\frac1{\Gamma(1-\alpha)}\int_0^{1}(\frac1{\tau_{n-2}}\int_0^{\tau_{n-2}}(t\tau_n+\tau_{n-1})^{-\alpha}\,ds.\\[5pt]
&-\int_0^{1}(t\tau_n+s\tau_{n-1})^{-\alpha}\,ds)\,dt+\frac{\widetilde{a}_{n,n-1}}{\tau_{n-1}}.
\end{split}
\end{equation}
Based on the mean value theorem, we can further derive from \eqref{528}
\begin{equation}\label{609}
\scalebox{1.2}{$
\tfrac{\widetilde{a}_{n,n-2}}{\tau_{n-2}}-\tfrac{\widetilde{a}_{n,n-1}}{\tau_{n-1}}\le \tfrac{\varepsilon}{\Gamma(1-\alpha)} +\tfrac{\tau_{n-1}^{-\alpha}}{\Gamma(1-\alpha)}( (\xi+1)^{-\alpha}-\int_0^1(\xi+s)^{-\alpha}\, ds), $}\vspace{0.1cm}
\end{equation}
where $\xi \in [0,\frac{\tau_n}{\tau_{n-1}}]$. Based on  \eqref{eq:graded_mesh_fast}, we have $\xi\le \frac{\tau_n}{\tau_{n-1}} \leq 3^{r-1}$ for $n\ge 3$. Consequently, for the last term on the right‑hand side of \eqref{609}, there exists a positive constant $c_{\alpha,r}$, depending only on $\alpha$ and $r$, such that
\begin{equation}\label{621}
    (\xi+1)^{-\alpha}-\int_0^1(\xi+s)^{-\alpha}\, ds\le -c_{\alpha,r}.
\end{equation}
With $\tau_{n-1}\le rTM^{-1}$ and $\varepsilon \le c_{\alpha,r} T^{-\alpha}M^{-\alpha}$, we derive \eqref{eq:coe2}  
 by substituting \eqref{621} into 
\eqref{609}.

\noindent\textbf{Case 3 ($k=n-1$).}
From \eqref{eq:fastavgL1_coeff}, we have
\begin{equation}\label{741}
    \frac{\widetilde{a}_{n, n}}{\tau_n}=\frac{\tau_n^{-\alpha}}{\Gamma(3-\alpha)} \qquad \mathrm{and}\qquad \frac{\widetilde{a}_{n, n-1}}{\tau_{n-1}}=\frac{\left(\tau_n+\tau_{n-1}\right)^{2-\alpha}-\tau_n^{2-\alpha}-\tau_{n-1}^{2-\alpha}}{\tau_n \tau_{n-1} \Gamma(3-\alpha)}.
\end{equation}
With the mean value theorem, we obtain from \eqref{741}
\vspace{0.1cm}
\begin{equation}\label{746}
    \frac{\widetilde{a}_{n, n-1}}{\tau_{n-1}} \leq \frac{(2-\alpha) 2^{1-\alpha}}{\Gamma(3-\alpha)} \tau_n^{-\alpha}.\vspace{0.1cm}
\end{equation}
Using \eqref{741}-\eqref{746} and $\tau_n\le rTM^{-1}$, we derive
\vspace{0.1cm}
\begin{equation}\label{754}
\begin{split}
    &\frac{\widetilde{a}_{n, n-1}}{\tau_{n-1}}-\frac{1}{\tau_n}+C^*-\frac{\widetilde{a}_{n, n}}{\tau_{n}}\\[5pt]
    \le\; &\frac{(2-\alpha) 2^{1-\alpha}-1}{\Gamma(3-\alpha)} \tau_n^{-\alpha}-\frac{1}{\tau_n}+C^*\\[6pt]
    \le\; &\frac{1}{\tau_n}\left(\tfrac{(2-\alpha) 2^{1-\alpha}-1}{\Gamma(3-\alpha)} \left(\tfrac{rT}{M}\right)^{1-\alpha}-1+rTC^*M^{-1}\right).
\end{split}
\end{equation}
Under assumption \eqref{set1}, the quantity in the last line of \eqref{754} is nonpositive. Thus, \eqref{eq:coe2} follows, completing the proof.
\end{proof}

An estimate for 
$R^n := R_1^n+R_2^n$ (see \eqref{eq:error1}-\eqref{eq:error2}) is then provided in the following lemma.
\begin{lemma}
\label{lem:accumulated_R}
Suppose that the assumptions of Lemma~\ref{average_lem} hold and $r\ge 4$. Then, we have
\begin{equation}\label{eq:R_accumulated_1}
\sum\limits_{n=1}^M \tau_n \|R^n\|_{m} \le C_2 (\varepsilon + M^{-2})   \quad  \mathrm{and} \quad   \sum\limits_{n=1}^M \tau_n \|R^n\|_{m}^2 \le C_2(\varepsilon+ M^{-2})^2,
\end{equation}
where $C_2$ is a constant independent of $M$ and $\|\cdot\|_m$ is defined in \eqref{hm_norm}.
\end{lemma}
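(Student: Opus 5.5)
We bound $\|R^n\|_m\le\|R_1^n\|_m+\|R_2^n\|_m$ using Lemma~\ref{average_lem}, multiply by $\tau_n$, and sum. Since $r\ge4$ and $\delta\ge\alpha>0$, we have $r(1+\delta)>4>2$. Hence only the first branch of \eqref{Psi} occurs, and
\begin{equation*}
\|R^n\|_m\le C_1\Bigl(\varepsilon+M^{-2}t_n^{\delta-\alpha-\frac2r}+M^{-2}t_n^{\delta-\frac2r}\Bigr),\quad n\ge2,
\qquad \|R^1\|_m\le C_1\Bigl(\varepsilon+M^{-2}t_1^{\delta-\alpha-\frac2r}+M^{-r(1+\delta)}\Bigr).
\end{equation*}
The $\varepsilon$ part is immediate: $\sum_n\tau_n\varepsilon=T\varepsilon$, and $\sum_n\tau_n\varepsilon^2=T\varepsilon^2$.

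The mesh facts are $t_n=T(n/M)^r$ and $\tau_n\le rT M^{-r}n^{r-1}$. They reduce every power sum to
\begin{equation*}
\sum_{n=1}^M\tau_n t_n^{\beta}\le C M^{-r(1+\beta)}\sum_{n=1}^M n^{r-1+r\beta}.
\end{equation*}
This is bounded by a constant whenever $r-1+r\beta>-1$, i.e. $\beta>-1$, and in that case it is comparable to $\int_0^T t^\beta\,dt$.

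\textbf{First sum.} Take $\beta=\delta-\alpha-2/r$. Because $r\ge4$ and $\alpha<1$,
\begin{equation*}
\beta\ge-\alpha-\tfrac12>-\tfrac32 .
\end{equation*}
This is not automatically greater than $-1$, so a Riemann-sum comparison is not enough by itself. I will split off $n=1$ and treat it separately:
\begin{equation*}
\tau_1 t_1^{\beta}=t_1^{1+\beta}=CM^{-r(1+\delta-\alpha)+2},\qquad M^{-2}\tau_1t_1^{\beta}=CM^{-r(1+\delta-\alpha)}\le CM^{-r}\le CM^{-2},
\end{equation*}
using $\delta\ge\alpha$. For $n\ge2$, if $\beta\le-1$ the sum is controlled by its smallest-$n$ terms, so it is again of size $CM^{-r(1+\beta)}\log M$ or $CM^{-r(1+\beta)}$. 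After multiplying by $M^{-2}$ this becomes at most a power $M^{-r(1+\delta-\alpha)}$, possibly times $\log M$. That is at most $CM^{-2}$ provided $r(1+\delta-\alpha)>2$; since $\delta\ge\alpha$ and $r\ge4$ we have $r(1+\delta-\alpha)\ge4$, so this holds.

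\textbf{$R_2$ terms.} For $n\ge2$ the exponent $\delta-2/r\ge-1/2>-1$, so
\begin{equation*}
\sum_{n\ge2}\tau_nM^{-2}t_n^{\delta-2/r}\le CM^{-2}.
\end{equation*}
For $n=1$, $\tau_1M^{-r(1+\delta)}\le M^{-2}$. This proves the first estimate in \eqref{eq:R_accumulated_1}.

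\textbf{Second estimate.} I will use $\|R^n\|_m^2\le3(\varepsilon^2+\cdots)$ and square each power term. The $R_2$ terms give
\begin{equation*}
M^{-4}\sum_n\tau_nt_n^{2\delta-4/r},
\end{equation*}
with exponent $2\delta-4/r\ge-1$. This sum is bounded by $CM^{-4}\log M\le CM^{-4}$ up to the log. In fact $2\delta-4/r>-1$ unless $\delta$ is tiny, and in the borderline case I will use $\tau_nt_n^{-1}\le C$ so that the sum is $O(\log M)$, which is absorbed because $M^{-4}\log M\le CM^{-4}$ is false in general. Absorbing it instead into $M^{-4}\log M\le M^{-4+}$ is still $\le CM^{-2\cdot2}$ only up to a log, so I expect the $R_1$ square term to be the true obstacle.

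\textbf{Main obstacle: the squared $R_1$ term.} This is
\begin{equation*}
M^{-4}\sum_n\tau_nt_n^{2(\delta-\alpha)-4/r},
\end{equation*}
whose exponent can be as low as about $-1$ when $r=4$. The reduced sum is $M^{-r(1+2\beta)}\sum_n n^{r-1+2r\beta}$. When the exponent of $n$ is negative, the sum is dominated by $n=1$, giving
\begin{equation*}
M^{-4-r(1+2\delta-2\alpha)+4}=M^{-r(1+2\delta-2\alpha)}\le M^{-r}\le M^{-4}.
\end{equation*}
When the exponent is nonnegative, the bound is $CM^{-4}$ directly, with at worst a $\log M$ in the critical case. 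To handle that critical case, I will first try to exploit the slack that $r\ge4$ provides over the needed $r(1+\delta-\alpha)\ge2$: bound $t_n^{-\alpha}\le t_n^{-\alpha}$ times a harmless factor, or use the sharper per-step bound from \cite{shen2023} in which $n=1$ is treated separately. If the logarithm survives, the fallback is to absorb it by noting that $M^{-r(1+2\delta-2\alpha)}\log M\le CM^{-4}$ once $r>4$ or $\delta>\alpha$. Finally, I combine the squares via $(\varepsilon+M^{-2})^2\ge\varepsilon^2+M^{-4}$.
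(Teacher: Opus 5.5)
Your first estimate is correct and follows the paper's route: termwise bounds from Lemma~\ref{average_lem}, then power sums on the graded mesh. The case split there is vacuous, though. You dropped $\delta\ge\alpha$ when bounding $\beta$; in fact $\beta=\delta-\alpha-2/r\ge-2/r\ge-1/2$. For $-1<\gamma<0$, monotonicity gives $\tau_nt_n^{\gamma}\le\int_{t_{n-1}}^{t_n}t^{\gamma}\,dt$, hence $\sum_n\tau_nt_n^{\gamma}\le T^{1+\gamma}/(1+\gamma)$ at once. Your worry about the squared $R_2$ term is likewise unfounded, since $2\delta-4/r\ge2\delta-1>-1$ strictly. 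The paper avoids that term entirely. It first absorbs the $R_2$ and $n=1$ contributions, via $t_n^{\delta-2/r}\le T^{\alpha}t_n^{\delta-\alpha-2/r}$ and $M^{-r(1+\delta)}\le CM^{-2}t_1^{\delta-\alpha-2/r}$, into the single bound $\|R^n\|_m\le C(\varepsilon+M^{-2}t_n^{\beta})$.

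The genuine gap is the squared $R_1$ term $M^{-4}\sum_n\tau_nt_n^{2\beta}$. Your claim that a negative exponent of $n$ makes the sum dominated by $n=1$ is wrong. That exponent is $p=r-5+2r(\delta-\alpha)\ge-1$, and for $-1<p<0$ the sum grows like $M^{p+1}$. The integral comparison above still gives $CM^{-4}$ whenever $r(1+2(\delta-\alpha))>4$. In the equality case $r=4$, $\delta=\alpha$, however, you end without a proof; your fallback merely excludes that case. Splitting off $n=1$ cannot help either, because the logarithm comes from all steps: $\tau_n/t_n=1-(1-1/n)^r\ge1/n$, so $M^{-4}\sum_{n=1}^M\tau_nt_n^{-1}\ge M^{-4}\ln(M+1)$.

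That said, your diagnosis is right, and the paper's proof has exactly the same hole. It justifies \eqref{lem33} by ``$r+2r(\delta-\alpha)\ge4$''. At equality its own power sum becomes $CM^{-4}\sum_{n\le M}n^{-1}$, and the lower bound just given shows \eqref{lem33} is then false. So from Lemma~\ref{average_lem} alone, the second bound in \eqref{eq:R_accumulated_1} with an $M$-independent $C_2$ needs the strict condition $r(1+2(\delta-\alpha))>4$, i.e.\ $r>4$ or $\delta>\alpha$. In the critical case one only obtains $C\bigl(\varepsilon^2+M^{-4}(1+\ln M)\bigr)$, hence a $\sqrt{\ln M}$ loss in Theorem~\ref{thm:semi}, unless a sharper per-step bound than \eqref{Psi} is proved. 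This critical case is exactly the choice $r=4$, $\delta=\alpha$ made in Section~\ref{sec:numerical}. A correct write-up should either add the strict hypothesis or carry the logarithm.
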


\begin{proof}
Since $r\ge4$ and $\delta\ge\alpha > 0$ (see Lemma~\ref{average_lem}), we have
$r(1+\delta) > 4$. It follows from Lemma~\ref{average_lem} that
\begin{equation}
\label{eq:R_basic_bound}
\|R^n\|_{m} \le 
C_1
\begin{cases}
    \varepsilon + M^{-2} t_1^{\delta-\alpha-\frac{2}{r}} + M^{-r(\delta+1)}, & n = 1, \\[2mm]
    \varepsilon + M^{-2} t_n^{\delta-\alpha-\frac{2}{r}} + M^{-2} t_n^{\delta - \frac{2}{r}}, & n \ge 2.
\end{cases}
\end{equation}
When $n=1$, \eqref{eq:graded_mesh_fast} gives $t_1=TM^{-r}$, indicating that  
\vspace{0.05cm}
\begin{equation}\label{lem111}
M^{-r(\delta+1)}\le M^{-r(\delta+1)+r(1+\alpha)}= M^{-r(\delta-\alpha)}=T^{-\delta+\alpha+\frac{2}{r}}M^{-2} t_1^{\delta-\alpha-\frac{2}{r}} .
\end{equation}
Similarly, we further derive 
\begin{equation}\label{lem222}
\begin{split}
M^{-2} t_n^{\delta-\frac{2}{r}} \le T^{\alpha}M^{-2} t_n^{\delta-\alpha-\frac{2}{r}},     \quad\quad 
\forall\, n\ge2.
\end{split}
\end{equation}
Consequently, substituting \eqref{lem111}--\eqref{lem222} into \eqref{eq:R_basic_bound} yields
\begin{equation}\label{lem11}
\|R^n\|_{m} \le C_2(\varepsilon + M^{-2} t_n^{\delta-\alpha-\frac{2}{r}}),
\end{equation} 
where the constant $C_2$ is independent of $M$. Based on \eqref{eq:graded_mesh_fast}, we have 
\begin{equation}\label{eq:tau_bound}
\tau_n = t_n - t_{n-1} \le C M^{-r} n^{r-1}, \qquad 1 \le n \le M.
\end{equation}
where the constant $C$ is independent of $M$. With \eqref{eq:tau_bound} and $r(1 + \delta - \alpha) \ge r \ge 4$, we derive
\begin{equation}\label{lem1}
\sum\limits_{n=1}^M \tau_n M^{-2} t_n^{\delta-\alpha-\frac{2}{r}}
\le C_2 \sum\limits_{n=1}^M M^{-r} n^{r-1} M^{-2}(\tfrac{n}{M})^{r(\delta - \alpha) - 2} \le C_2 M^{-2}.
\end{equation}
With \eqref{lem11} and \eqref{lem1}, we obtain
\begin{equation}\label{lem3}
\sum\limits_{n=1}^M \tau_n \|R^n\|_{m} \le C_2( \varepsilon + M^{-2}). 
\end{equation}
Similarly, from \eqref{lem11}, we derive
\begin{equation}\label{lem22}
\begin{split}
\sum\limits_{n=1}^M \tau_n \|R^n\|_{m}^2
&\le C_2 \sum\limits_{n=1}^M \tau_n (\varepsilon^2 + 2\varepsilon M^{-2} t_n^{\delta-\alpha-\frac{2}{r}} + M^{-4} t_n^{2\delta-2\alpha-\frac{4}{r}}) \\
&\le C_2 ( T\varepsilon^2 + 2\varepsilon \sum\limits_{n=1}^M \tau_n M^{-2} t_n^{\delta-\alpha-\frac{2}{r}} + \sum\limits_{n=1}^M \tau_n M^{-4} t_n^{2\delta-2\alpha-\frac{4}{r}} ).
\end{split}
\end{equation}
With \eqref{eq:tau_bound} and $r + 2r(\delta-\alpha) \ge 4$, it holds
\begin{equation}\label{lem33}
\sum\limits_{n=1}^M \tau_n M^{-4} t_n^{2\delta-2\alpha-\frac{4}{r}}
\le C_2 \sum\limits_{n=1}^M M^{-r} n^{r-1} M^{-4} t_n^{2(\delta-\alpha)-\frac{4}{r}} \le C_2 M^{-4}.
\end{equation}
Substituting \eqref{lem1} and \eqref{lem33} into \eqref{lem22} yields
\begin{equation}\label{lem4}
\sum\limits_{n=1}^M \tau_n \|R^n\|_{m}^2 \le C_2(\varepsilon+ M^{-2})^2.\vspace{0.05cm}
\end{equation}
Finally, \eqref{eq:R_accumulated_1} can be proved by using \eqref{lem4} and \eqref{lem3}.
\end{proof}

Next, the stability and \(H^1\)-norm convergence analysis for \eqref{semi} are presented in the Theorems~\ref{thm:fast_stability} and~\ref{thm:semi}, respectively.

\begin{thm}\label{thm:fast_stability} Suppose that the assumptions of Lemma~\ref{coe2} hold and that \(0\le \rho<\rho_1\). Then, for $1\le n\le M$, the numerical solution of \eqref{semi} satisfies 
\begin{equation*}
\|v^n\|
\le
\begin{cases}
    e^ {\tfrac{\rho\rho_1t_n}{\rho_1-\rho}} 
( \|v^0\| + \tfrac{\rho_1}{\rho_1-\rho}  \sum\limits_{j=1}^n \tau_j \|\bar f^{\,j-\frac12}\|), & 0 < \rho < \rho_1,\\[15pt]
\|v^0\|
+
\sum\limits_{j=1}^n \tau_j\|\bar f^{\,j-\frac12}\|, &\rho=0,
\end{cases}
\end{equation*}
where $\|\cdot\|$ is defined in \eqref{l2_norm}, $C_P$ is the Poincaré constant,
\begin{equation*}
\rho_1 = \tfrac{2}{\max\limits_{1\le n\le M}\tau_n} \qquad \mathrm{and} \qquad \rho = \max\{\mu-C_P^{-2},0\}.  
\end{equation*}
\end{thm}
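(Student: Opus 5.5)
The plan is to derive a one-step recursion for $\|v^n\|$ of the form
\begin{equation*}
\bigl(1-\tfrac{\rho\tau_n}{2}\bigr)\|v^n\|\le\bigl(1+\tfrac{\rho\tau_n}{2}\bigr)\|v^{n-1}\| + \tau_n\|\bar f^{\,n-\frac12}\|,
\end{equation*}
and then iterate it. The constants in the statement point to this route: $\rho_1=2/\max_n\tau_n$ guarantees $x_n:=\rho\tau_n/2\le \rho/\rho_1<1$. The elementary inequalities
\begin{equation*}
\frac{1+x}{1-x}=1+\frac{2x}{1-x}\le e^{2x/(1-x)}, \qquad \frac{2x_n}{1-x_n}\le \frac{\rho\rho_1\tau_n}{\rho_1-\rho}, \qquad \frac{1}{1-x_n}\le\frac{\rho_1}{\rho_1-\rho}
\end{equation*}
then produce exactly the exponential factor $e^{\rho\rho_1 t_n/(\rho_1-\rho)}$ and the prefactor $\rho_1/(\rho_1-\rho)$ on the source sum. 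When $\rho=0$ the recursion has no amplification and telescopes to the second bound directly.

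\textbf{Step 1: energy identity.} I would take the $L^2$ inner product of \eqref{semi} with $v^{n-\frac12}$ and treat the terms as follows.
\begin{itemize}
\item The time-difference term gives $\frac{1}{2\tau_n}(\|v^n\|^2-\|v^{n-1}\|^2)$.
\item Integration by parts with the homogeneous boundary condition and the Poincaré inequality give $(\Delta v^{n-\frac12}+\mu v^{n-\frac12},v^{n-\frac12})\le (\mu-C_P^{-2})\|v^{n-\frac12}\|^2\le \rho\|v^{n-\frac12}\|^2$.
\item The source term is bounded by $\|\bar f^{\,n-\frac12}\|\,\|v^{n-\frac12}\|$.
\end{itemize}

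\textbf{Step 2: the fractional term (main obstacle).} The term $(\bar\partial^\alpha_{\tau,\varepsilon}v^n,v^{n-\frac12})$ has no sign by itself, and this is where Lemma~\ref{coe2} enters. I would apply Abel summation, using $\nabla_\tau v^k=(v^k-v^{k-1})/\tau_k$, to rewrite
\begin{equation*}
\nabla_\tau v^n+\bar\partial^\alpha_{\tau,\varepsilon}v^n=\Bigl(\tfrac{1}{\tau_n}+\tfrac{\widetilde a_{n,n}}{\tau_n}\Bigr)v^n-\sum_{k=1}^{n-1}g_{n,k}v^k-\tfrac{\widetilde a_{n,1}}{\tau_1}v^0 .
\end{equation*}
Here $g_{n,k}=\frac{\widetilde a_{n,k+1}}{\tau_{k+1}}-\frac{\widetilde a_{n,k}}{\tau_k}\ge0$ for $k\le n-2$, and $g_{n,n-1}$ additionally contains $1/\tau_n$ and is $\ge C^*$ by \eqref{eq:coe2}. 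So the update is a convex-type combination of past levels, with total weight equal to the diagonal weight.

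Because the Laplacian is unbounded, I would not try to absorb $\Delta v^{n-1}$ into the weights. Instead I would argue by the index of maximal norm. Let $n_0\le n$ be such that $\|v^{n_0}\|=\max_{k\le n}\|v^k\|$. Testing the $n_0$-th equation, the past-level terms are dominated by $\bigl(\sum_k g_{n_0,k}+\widetilde a_{n_0,1}/\tau_1\bigr)\|v^{n_0}\|\,\|v^{n_0-\frac12}\|$, where the nonnegativity of the $g$'s is what allows this domination. They therefore cancel against the diagonal part except for the genuine one-step difference term. Choosing $C^*=0$ or $C^*=\rho$ in Lemma~\ref{coe2}, whichever makes the bookkeeping close, should leave exactly the $(1\mp\rho\tau/2)$ structure, after dividing by $\|v^{n-\frac12}\|$ and using $\|v^{n-\frac12}\|\le\frac12(\|v^n\|+\|v^{n-1}\|)$.

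\textbf{Step 3: conclusion.} Once the one-step inequality holds along the sequence of running maxima, I would iterate it with the product/exponential estimates from the first paragraph. This yields the $\rho>0$ bound; for $\rho=0$ the sum of sources telescopes to the second bound. The step I expect to fail first is the sign control of the memory term in Step 2. If the max-index argument does not close cleanly, my fallback is a discrete positive-definiteness argument for the kernel $\{\widetilde a_{n,k}\}$, using the monotonicity in \eqref{eq:coe2} to write the memory contribution as a telescoping nonnegative functional.
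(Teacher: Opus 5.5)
Your Abel-summation rewriting and your target recursion are right. The paper's one-step bound is $(1-\tfrac{\rho\tau_n}{2})|v_m^n|\le(1+\tfrac{\rho\tau_n}{2})M_m^{n-1}+\tau_n|\bar f_m^{\,n-\frac12}|$, and your elementary inequalities give the stated constants. The gap is Step~2: it cannot produce this recursion at the level of $L^2$ norms.

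\textbf{Why Step~2 fails.} Test the $n_0$-th equation with $v^{n_0-\frac12}$. Bound all past levels, whose total weight is $A=\frac{1}{\tau_{n_0}}+\frac{\widetilde a_{n_0,n_0}}{\tau_{n_0}}$, by $A\|v^{n_0}\|\,\|v^{n_0-\frac12}\|$. What remains is $A\bigl[(v^{n_0},v^{n_0-\frac12})-\|v^{n_0}\|\,\|v^{n_0-\frac12}\|\bigr]\le\rho\|v^{n_0-\frac12}\|^2+\|\bar f^{\,n_0-\frac12}\|\,\|v^{n_0-\frac12}\|$. Its left side is nonpositive by Cauchy--Schwarz, so the inequality is vacuous and there is nothing to divide out. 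The max-principle trick needs the test function to be $v^{n_0}$ itself. But then the Crank--Nicolson term $(\nabla v^{n_0-1},\nabla v^{n_0})$ has no sign and cannot be bounded by $L^2$ quantities.

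Splitting off the $v^{n_0-1}$ term does not rescue it. Set $B=\widetilde a_{n_0,n_0-1}/\tau_{n_0-1}$, $\rho=0$, $\bar f=0$, $x=\|v^{n_0}\|$, $y=\|v^{n_0-1}\|$ and $\mathcal M=\max_{k<n_0}\|v^k\|$. The energy identity then only gives $\tfrac{A}{2}x^2-\tfrac{A-B}{2}y^2+\tfrac{B}{2}(v^{n_0},v^{n_0-1})\le\tfrac{B}{2}\mathcal M\|v^{n_0}+v^{n_0-1}\|$. For $y=\mathcal M$ and $v^{n_0}\perp v^{n_0-1}$ this admits $x\approx\bigl(1+\tfrac{\sqrt2-1}{2}\tfrac{B}{A}\bigr)\mathcal M$ with $B/A\sim\tau_{n_0}^{1-\alpha}$. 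Iterated over the mesh, this growth is unbounded as $M\to\infty$. The obstruction is structural. In operator form the coefficient of $v^{n-1}$ is $g_{n,n-1}-\frac12(-\Delta-\mu)$, which is not a nonnegative operator, so the update is not a convex combination in $L^2$.

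\textbf{The missing idea: diagonalize.} Expand $v^n$ and $\bar f^{\,n-\frac12}$ in the $L^2$-orthonormal eigenfunctions of $a_\mu(u,w)=(\nabla u,\nabla w)-(\mu u,w)$, as the paper does. This turns \eqref{semi} into scalar recursions in which the Laplacian is just a number $\lambda_m^\mu\ge-\rho$, so its unboundedness is harmless. Take $A,B$ at step $n$ and set $D_{m,n}=A+\lambda_m^\mu/2$. The coefficient $D_{m,n}-\lambda_m^\mu-B$ of $v_m^{n-1}$ is negative for large $\lambda_m^\mu$. However, Lemma~\ref{coe2} with $C^*=0$ (i.e.\ $B\le A$ and $g_{n,k}\ge0$ for $k\le n-2$) gives $|D_{m,n}-\lambda_m^\mu-B|+B\le D_{m,n}+\rho$ uniformly in $m$. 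Combined with $D_{m,n}\ge\frac{1}{\tau_n}-\frac{\rho}{2}$, this is exactly your recursion, but for the modal running maximum $M_m^n=\max_{j\le n}|v_m^j|$.

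You must iterate it mode by mode down to $|v_m^0|$ before summing over $m$, since $\sum_m(M_m^{n-1})^2$ is not controlled by $\max_k\|v^k\|^2$. Parseval and Minkowski's inequality (to move $\sum_j\tau_j$ outside the $\ell^2$-sum) then give the statement. Your fallback, positive definiteness of $\{\widetilde a_{n,k}\}$ against $v^{n-\frac12}$, does not follow from \eqref{eq:coe2}. That lemma only compares $\widetilde a_{n,k}/\tau_k$ across $k$ for fixed $n$.
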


\begin{proof}
Let $a_\mu(u,w) := (\nabla u,\nabla w) - (\mu u,w)$. Multiplying \eqref{semi} by a test function $\phi$ yields
\vspace{0.1cm}
\begin{equation}\label{weak}
(\nabla_\tau v^n,\phi) + (\bar{\partial}_{\tau,\varepsilon}^\alpha v^n,\phi) + a_\mu(v^{n-\frac12},\phi) = (\bar f^{\,n-\frac12},\phi).\vspace{0.1cm}
\end{equation}
Based on the Poincaré inequality, we have 
\begin{equation*}
a_\mu(w,w) = \|\nabla w\|^2 - (\mu w,w) \ge -\rho\|w\|^2,  \qquad  \forall\, w\in H_0^1(\Omega).   
\end{equation*}
Let $\{\phi_m\}_{m = 1}^{\infty}$ be a $L^2$-orthonormal basis of eigenfunctions associated with $a_\mu$. Then we have 
\begin{equation*}
a_\mu(\phi_m,w)=\lambda_m^\mu(\phi_m,w),
\qquad \forall w\in H_0^1(\Omega),\vspace{0.1cm}
\end{equation*}
where $\lambda_m^\mu\ge -\rho$. Let
\begin{equation}\label{l2expan}
v^n = \sum\limits_{m\ge1} v_m^n \phi_m\qquad \mathrm{and}\qquad
\bar f^{\,n-\frac12} = \sum_{m\ge1} \bar f_m^{\,n-\frac12} \phi_m,
\end{equation}
indicating that \eqref{weak} becomes  
\begin{equation}\label{s1}
\frac{1}{{\tau_n}}{(v_m^n - v_m^{n-1})}
+ \sum\limits_{k=1}^n \frac{\widetilde a_{n,k}}{\tau_k}(v_m^k - v_m^{k-1})
+ \frac{1}{2}\lambda_m^\mu (v_m^n + v_m^{n-1})
= \bar f_m^{\,n-\frac12},
\end{equation}
where $\{\widetilde{a}_{n,k}\}_{k=1}^n$ are given by \eqref{eq:fastavgL1_coeff}. We rewrite \eqref{s1} as
\begin{equation}\label{s2}
\scalebox{1}{$
D_{m,n} v_m^n
= (D_{m,n} -\lambda_m^{\mu} - \frac{\widetilde a_{n,n-1}}{\tau_{n-1}})  v_m^{n-1}+ \sum\limits_{k=1}^{n-2}( \frac{\widetilde a_{n,k+1}}{\tau_{k+1}} - \frac{\widetilde a_{n,k}}{\tau_k}) v_m^k
+ \frac{\widetilde a_{n,1}}{\tau_1} v_m^0
+ \bar f_m^{\,n-\frac12},$}
\end{equation}
where
\begin{equation*}
D_{m,n} := \frac{1}{\tau_n} + \frac{\widetilde a_{n,n}}{\tau_n} + \frac{\lambda_m^\mu}{2}.\vspace{0.15cm}
\end{equation*}
Since $\lambda_m^\mu \ge -\rho>-\rho_1=\tfrac{2}{\max\limits_{1\le n\le M}\tau_n}$, we further derive
\vspace{0.1cm}
\begin{equation}\label{s4}
0 \leq \frac{1}{\tau_n} - \frac{1}{\max\limits_{1\le n\le M}\tau_n} < \frac{1}{\tau_n} + \frac{\widetilde a_{n,n}}{\tau_n} - \frac{\rho}{2}  \leq D_{m,n}.\vspace{0.1cm}
\end{equation}
By Lemma~\ref{coe2} with \(C^*=0\), we have
\vspace{0.1cm}
\begin{equation}\label{0.1}
  \frac{\widetilde a_{n,n-1}}{\tau_{n-1}} \leq \frac{1}{\tau_n}+\frac{\widetilde a_{n,n}}{\tau_{n}} \le D_{m,n} -\frac{\lambda_m^\mu}{2} \quad \mathrm{and} \quad
\frac{\widetilde a_{n,k}}{\tau_k} \le \frac{\widetilde a_{n,k+1}}{\tau_{k+1}}\, (1\le k\le n-2), \vspace{0.07cm}
\end{equation}
indicating that \(\frac{\widetilde a_{n,k+1}}{\tau_{k+1}} - \frac{\widetilde a_{n,k}}{\tau_k}\) given in \eqref{s2} are nonnegative. We define
\begin{equation}\label{0.2}
M_m^n := \max\limits_{0 \le j \le n} |v_m^j|.  
\end{equation}
Taking absolute values in \eqref{s2} and combining \(-\lambda_m^\mu \le \rho\) with \eqref{0.1} yields
\vspace{0.1cm}
\begin{equation}\label{s3}
\begin{split}
    D_{m,n} |v_m^n|
\le\; &\bigl( \bigl|D_{m,n} -\lambda_m^{\mu} - \frac{\widetilde a_{n,n-1}}{\tau_{n-1}} \bigl|+\frac{\widetilde a_{n,n-1}}{\tau_{n-1}} \bigl) M_m^{n-1}
+ |\bar f_m^{\,n-\frac12}|\\[3pt]
\le\; & \left(D_{m,n}+\rho\right)M_m^{n-1}
+ |\bar f_m^{\,n-\frac12}|.\\[3pt]
\end{split}
\end{equation}
It follows from \eqref{s4} that \(D_{m,n}\ge \frac{1}{\tau_n}-\frac{\rho}{2}\), which, together with \eqref{s3}, implies
\vspace{0.1cm}
\begin{equation}\label{821}
|v_m^n|
\le (1 + \frac{2\rho\tau_n}{2-\rho\tau_n}) M_m^{n-1}
+ \frac{2\tau_n}{2-\rho\tau_n}|\bar f_m^{\,n-\frac12}|.\vspace{0.15cm}
\end{equation}
Based on the notation $\rho_1=\frac{2}{\max\limits_{1\le n\le M}\tau_n}$, we derive from 
\eqref{0.2} and \eqref{821} 
\begin{equation*}
M_m^n = \max\{v_m^n,M_m^{n-1}\}
\le (1 + \frac{\rho\rho_1\tau_n}{\rho_1-\rho}) M_m^{n-1}
+ \frac{\rho_1\tau_n}{\rho_1-\rho} |\bar f_m^{\,n-\frac12}|. \vspace{0.1cm}
\end{equation*}
Using the discrete Gronwall inequality and $|M_m^0|=|v_m^0|$, we further derive 
\vspace{0.05cm}
\begin{equation} \label{m1}
v_m^n \le M_m^n
\le e^ {\tfrac{\rho\rho_1t_n}{\rho_1-\rho}} ( |v_m^0| + \frac{\rho_1}{\rho_1-\rho} \sum\limits_{j=1}^n \tau_j |\bar f_m^{\,j-\frac12}|).\vspace{0.1cm}
\end{equation}
We apply the Parseval's identity to \eqref{l2expan}. Together with \eqref{m1}, we have
\begin{equation}\label{0.3}
\|v^n\|
= ( \sum\limits_{m\ge1} |v_m^n|^2 )^{1/2} \le e^ {\tfrac{\rho\rho_1t_n}{\rho_1-\rho}} 
( \|v^0\| + \frac{\rho_1}{\rho_1-\rho}  \sum\limits_{j=1}^n \tau_j \|\bar f^{\,j-\frac12}\|).
\end{equation}
Setting $\rho = 0$ in \eqref{0.3} gives
\begin{equation*}
\|v^n\| \le \|v^0\| + \sum_{j=1}^n \tau_j \|\bar f^{\,j-\frac12}\|.   
\end{equation*}
This completes the proof.
\end{proof}

The $H^1$-norm convergence analysis for \eqref{semi} is now summarized in the following theorem.
\begin{thm}\label{thm:semi}
Suppose that the assumptions of Lemma~\ref{coe2} hold and $r\ge4$. Then, we have
\vspace{0.15cm}
\begin{equation*}\vspace{-0.15cm}
\scalebox{1.2}{$
\sum\limits_{k=1}^n\frac{\|e^k-e^{k-1}\|^2}{\tau_k} +\frac{\sigma(\alpha)}{\Gamma(3-\alpha)}\sum\limits_{k=1}^n\frac{\|e^k-e^{k-1}\|^2}{\tau_k^{\alpha}}+\|e^n\|_1^2
\leq C_3(\varepsilon+ M^{-2})^2,   $}\vspace{0.15cm}
\end{equation*}
where $v$ is the exact solution of \eqref{eq:v-problem}, $\{v^k\}_{k=1}^{M}$ is the numerical solution of \eqref{semi}, and the norms $\|\cdot\|$ and $\|\cdot\|_1$ are defined in \eqref{l2_norm} and \eqref{hm_norm}, respectively. The constant $C_2$ is the same as that in \eqref{eq:R_accumulated_1}.
Furthermore, we define
\begin{equation*}
e^k:=v(t_k)-v^k,\quad \sigma(\alpha)=2\alpha(1-2^{-\alpha}),\quad
 C_3=C_2|\mu|t_n e^ {\frac{\rho\rho_1t_n}{\rho_1-\rho}}(\tfrac{\rho_1}{\rho_1-\rho})^2   + 1 . 
\end{equation*}
\end{thm}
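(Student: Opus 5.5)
We would derive the error equation and then test it with the difference quotient of the error. Subtracting \eqref{semi} from \eqref{average}, written with the truncation errors \eqref{eq:error1}--\eqref{eq:error2}, gives
\begin{equation*}
\nabla_\tau e^n+\bar{\partial}_{\tau,\varepsilon}^\alpha e^n-\Delta e^{n-\frac12}-\mu e^{n-\frac12}=R^n,\qquad e^0=0,\quad 1\le n\le M,
\end{equation*}
where $R^n=R_1^n+R_2^n$ up to sign. We take the $L^2$ inner product with $e^n-e^{k-1}$ at level $n=k$, i.e.\ with $\tau_k\nabla_\tau e^k$, and sum over $k=1,\dots,n$.

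The individual terms are handled as follows.
\begin{enumerate}[leftmargin=2em]
\item The first term yields $\sum_{k}\|e^k-e^{k-1}\|^2/\tau_k$ exactly.
\item The Laplacian term telescopes: $(\nabla e^{k-\frac12},\nabla(e^k-e^{k-1}))=\tfrac12(\|\nabla e^k\|^2-\|\nabla e^{k-1}\|^2)$, which gives $\tfrac12\|\nabla e^n\|^2$.
\item The $\mu$-term likewise telescopes to $-\tfrac{\mu}{2}\|e^n\|^2$.
\item The right-hand side is bounded by Cauchy--Schwarz and Young: $\tau_k(R^k,\nabla_\tau e^k)\le \tfrac12\tau_k\|R^k\|^2+\tfrac12\|e^k-e^{k-1}\|^2/\tau_k$. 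Half of the first sum absorbs the second piece, and Lemma~\ref{lem:accumulated_R} bounds $\sum\tau_k\|R^k\|^2\le C_2(\varepsilon+M^{-2})^2$.
\end{enumerate}

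The key step, and the main obstacle, is a positivity (coercivity) estimate for the discrete fractional term:
\begin{equation*}
\sum_{k=1}^n\tau_k\big(\bar{\partial}_{\tau,\varepsilon}^\alpha e^k,\nabla_\tau e^k\big)\ge \frac{\sigma(\alpha)}{2\Gamma(3-\alpha)}\sum_{k=1}^n\frac{\|e^k-e^{k-1}\|^2}{\tau_k^\alpha}-(\text{absorbable remainder}).
\end{equation*}
Write $w_k=e^k-e^{k-1}$ and $b_{n,k}=\widetilde a_{n,k}/\tau_k$. The form is then $\sum_n\sum_{k\le n} b_{n,k}(w_k,w_n)$.

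Lemma~\ref{coe2} shows that $b_{n,k}$ is nondecreasing in $k$ for $k\le n-1$, up to the jump $b_{n,n-1}-b_{n,n}\le \tau_n^{-1}-C^*$. The plan is to split off the diagonal part and use $\widetilde a_{n,n}/\tau_n=\tau_n^{-\alpha}/\Gamma(3-\alpha)$ from \eqref{741}. For the off-diagonal monotone part, we would apply the standard Abel-summation / Toeplitz-positivity argument (as in Liao et al.\ and Ji et al.\ for the averaged L1 kernel). This should give a lower bound of the form
\begin{equation*}
\sum_n\Big(\frac{\widetilde a_{n,n}}{\tau_n}-\tfrac12 b_{n,n-1}\Big)\|w_n\|^2-\tfrac12\sum_n\frac{\|w_n\|^2}{\tau_n}\cdot(\text{small}).
\end{equation*}
Combining this with \eqref{746}, the sharp comparison of $(\tau_n+\tau_{n-1})^{2-\alpha}-\tau_n^{2-\alpha}-\tau_{n-1}^{2-\alpha}$ on the graded mesh should produce the constant $\sigma(\alpha)=2\alpha(1-2^{-\alpha})$ in front of $\tau_n^{-\alpha}\|w_n\|^2$.

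The leftover loss of size $\tau_n^{-1}$ permitted by the $k=n-1$ case of Lemma~\ref{coe2} (choosing $C^*$ suitably) is absorbed into the remaining half of $\sum\|w_k\|^2/\tau_k$. The SOE perturbation of the kernel is controlled by $\varepsilon\le c_{\alpha,r}T^{-\alpha}M^{-\alpha}$, exactly as in Case~2 of Lemma~\ref{coe2}. I expect the exact constant bookkeeping here to be the delicate part. If a clean argument fails, I would first try proving positive semidefiniteness of the symmetric matrix $[b_{\max(i,j),\min(i,j)}]$ via a discrete convexity decomposition.

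Finally, we control the $\mu$-term to close the $\|e^n\|_1^2$ bound. We bound $|\mu|\|e^n\|^2$ using Theorem~\ref{thm:fast_stability} applied to the error equation, with $v^0\to e^0=0$ and $\bar f\to R$. This gives $\|e^n\|\le e^{\rho\rho_1t_n/(\rho_1-\rho)}\tfrac{\rho_1}{\rho_1-\rho}\sum\tau_j\|R^j\|$, so by Cauchy--Schwarz and Lemma~\ref{lem:accumulated_R}
\begin{equation*}
|\mu|\|e^n\|^2\le C_2|\mu|t_n e^{\frac{\rho\rho_1t_n}{\rho_1-\rho}}\big(\tfrac{\rho_1}{\rho_1-\rho}\big)^2(\varepsilon+M^{-2})^2,
\end{equation*}
possibly with the exponential squared if the bound is taken literally. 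Adding $\|e^n\|^2$ (by the same bound, or Poincaré) to $\|\nabla e^n\|^2$ to form $\|e^n\|_1^2$, multiplying through by 2, and collecting constants gives the stated estimate with $C_3$.
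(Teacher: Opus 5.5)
Your outline is the paper's proof: the error equation with $R^n=R_1^n+R_2^n$, testing with $e^k-e^{k-1}$, telescoping, and Young against half of $\sum_k\|e^k-e^{k-1}\|^2/\tau_k$. The same goes for the $L^2$ bound via Theorem~\ref{thm:fast_stability} plus Cauchy--Schwarz, followed by Lemma~\ref{lem:accumulated_R}. Your remarks about the squared exponential and about $\|\nabla e^n\|$ versus $\|e^n\|_1$ correctly flag slips in the paper's write-up.

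\textbf{The gap is the coercivity step.} The paper does not derive \eqref{eq:con1} from Lemma~\ref{coe2}; it imports it from \cite[Theorem 2.1]{zheng20252}. The route you sketch would not produce it, for three reasons.

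\emph{(i) Row monotonicity is not enough.} Lemma~\ref{coe2} only compares $b_{n,k}=\widetilde a_{n,k}/\tau_k$ within a fixed row $n$. Row-wise monotonicity does not make the non-Toeplitz form $\sum_n\sum_{k\le n}b_{n,k}(w_k,w_n)$ nonnegative. Already with two steps, $b_{1,1}=1/8$, $b_{2,1}=b_{2,2}=1$ gives an indefinite form. Positivity lemmas for variable-step L1-type kernels need additional conditions linking different rows, for instance decay of $b_{n,k}$ in $n$. The averaged L1 kernel violates this next to the diagonal: on a uniform mesh $b_{n,n-1}/b_{n,n}=b_{n,n-1}/b_{n-1,n-1}=2^{2-\alpha}-2$, which exceeds $1$ for $\alpha<2-\log_2 3$.

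\emph{(ii) $\sigma(\alpha)$ cannot come out of \eqref{746}.} Even granting your ``diagonal minus half the subdiagonal'' lower bound, \eqref{746} gives the coefficient $\bigl(1-(2-\alpha)2^{-\alpha}\bigr)\tau_n^{-\alpha}/\Gamma(3-\alpha)$. This is negative for $\alpha$ below about $0.54$. It is also strictly smaller than $\sigma(\alpha)/(2\Gamma(3-\alpha))$ for every $\alpha\in(0,1)$, since the difference of the two constants is $(1-\alpha)(2^{1-\alpha}-1)/\Gamma(3-\alpha)>0$.

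\emph{(iii) The bookkeeping does not close.} Young has already spent one half of $\sum_k\|e^k-e^{k-1}\|^2/\tau_k$. Spending the other half on the coercivity loss removes that sum from the left-hand side, whereas the theorem keeps it with coefficient one. Moreover, half of $\tau_n^{-1}$ cannot absorb the jump $\tau_n^{-1}-C^*$ allowed by Lemma~\ref{coe2}. For the first steps this jump is essentially all of $\tau_n^{-1}$, because \eqref{set1} forces $C^*\le M/(rT)$ while $\tau_1^{-1}=M^r/T$.

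\textbf{What is missing.} Nonnegativity of the averaged L1 form is a continuous-level fact, not a consequence of coefficient monotonicity. Let $\Pi e$ be the piecewise linear interpolant of $\{e^k\}$ and $w=\partial_t\Pi e$, which is piecewise constant. Then the non-fast operator in \eqref{eq2026061201} satisfies
\begin{equation*}
\sum_{k=1}^n\bigl(\bar\partial_\tau^\alpha e^k,\,e^k-e^{k-1}\bigr)=\int_0^{t_n}\Bigl(\frac{1}{\Gamma(1-\alpha)}\int_0^t(t-s)^{-\alpha}w(s)\,ds,\;w(t)\Bigr)\,dt\;\ge\;0,
\end{equation*}
by positive definiteness of the kernel $|t|^{-\alpha}$.

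The fast coefficients differ from the exact ones only for $k\le n-2$, and there $|\widetilde a_{n,k}-a_{n,k}|\le\varepsilon\tau_k/\Gamma(1-\alpha)$. Hence the fast form differs from the exact one by at most $\frac{\varepsilon}{\Gamma(1-\alpha)}\bigl(\sum_k\|e^k-e^{k-1}\|\bigr)^2\le\frac{\varepsilon t_n}{\Gamma(1-\alpha)}\sum_k\|e^k-e^{k-1}\|^2/\tau_k$. This must be absorbed into the $\tau_k^{-1}$-sum, which is possible for $\varepsilon\le c_{\alpha,r}T^{-\alpha}M^{-\alpha}$ and $M$ large. It cannot be compared with the $\tau^{-\alpha}$-weights as in Case~2 of Lemma~\ref{coe2}.

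This closes the $H^1$ estimate up to constants. The additional $\sigma(\alpha)$-term in the statement is a quantitative refinement that you must either prove separately or cite, as the paper does. Without one of these, the key step of your proof is unsupported.
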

\begin{proof}
The error equation for \eqref{semi} is
\begin{equation}\label{weak1}
\begin{cases}
\nabla_\tau e^n + \bar{\partial}_{\tau,\varepsilon}^\alpha e^n
= \Delta e^{n-\frac12} + \mu e^{n-\frac12} + R^n,   \quad\, \text{in } \Omega,\\[4pt]
e^n|_{\partial\Omega} = 0,   \quad\quad
e^0|_{\overline{\Omega}} = 0.
\end{cases}
\end{equation}
Multiplying \eqref{weak1} by $\phi:= e^n - e^{n-1}$ yields
 \vspace{0.1cm}
\begin{equation}\label{c1}
\begin{split}
~   & \tfrac{1}{\tau_n}\|e^n - e^{n-1}\|^2
+ \bigl(\bar{\partial}_{\tau,\varepsilon}^\alpha e^n, e^n - e^{n-1}\bigr)
+ \tfrac{1}{2} \bigl(\|\nabla e^n\|^2 - \|\nabla e^{n-1}\|^2\bigl) \\[5pt]
\le\; &\tfrac{|\mu|}{2} \bigl(\|e^n\|^2 - \|e^{n-1}\|^2\bigr)
+ \tfrac12{\tau_n \|R^n\|^2}
+ \tfrac{1}{2\tau_n}\|e^n - e^{n-1}\|^2.
\end{split}
\end{equation}
As seen in \cite[Theorem 2.1]{zheng20252}, it holds
 \vspace{0.1cm}
\begin{equation}\label{eq:con1}
\frac{\sigma(\alpha)}{2\Gamma(3-\alpha)} \sum\limits_{k=1}^n \frac{1}{\tau_k^{\alpha}}{\|e^k - e^{k-1}\|^2} \leq \sum\limits_{k=1}^n \bigl(\bar{\partial}_{\tau,\varepsilon}^\alpha e^k, e^k - e^{k-1}\bigr).    \vspace{0.1cm}
\end{equation}
With \eqref{eq:con1}, summing up both sides of \eqref{c1} from $k = 1$ to $n$ gives
\vspace{0.15cm}
\begin{equation}\label{eq:sta1}
\begin{split}
~  &\sum\limits_{k=1}^n \tfrac{1}{\tau_k}{\|e^k - e^{k-1}\|^2}
+ \tfrac{\sigma(\alpha)}{2\Gamma(3-\alpha)} \sum\limits_{k=1}^n \tfrac{1}{\tau_k^{\alpha}}{\|e^k - e^{k-1}\|^2}
+ \tfrac12\big({\|e^n\|_1^2 - \|e^0\|_1^2}\big) \\[4pt]
\le\; &\tfrac{|\mu|}{2} \big({\|e^n\|^2 - \|e^0\|^2}\big)
+ \sum\limits_{k=1}^n \tfrac12{\tau_k \|R^k\|^2}
+ \sum\limits_{k=1}^n \tfrac{1}{2\tau_k}{\|e^k - e^{k-1}\|^2}.
\end{split}
\end{equation}
Since $e^0 = 0$, \eqref{eq:sta1} reduces to
\vspace{0.1cm}
\begin{equation}\label{eq:sta2}
\scalebox{1}{$
\sum\limits_{k=1}^n \frac{1}{2\tau_k}{\|e^k - e^{k-1}\|^2}
+ \frac{\sigma(\alpha)}{\Gamma(3-\alpha)} \sum\limits_{k=1}^n \frac{1}{\tau_k^{\alpha}}{\|e^k - e^{k-1}\|^2}
+ \|e^n\|_1^2
\le |\mu| \|e^n\|^2
+ \sum\limits_{k=1}^n \tau_k \|R^k\|^2.$}\vspace{0.1cm}
\end{equation}
Following the same argument as that of Theorem~\ref{thm:fast_stability}, we further derive from \eqref{weak1} that
\vspace{0.1cm}
\begin{equation}\label{eq:vn1}
\|e^n\|^2 \le  e^ {\frac{\rho\rho_1t_n}{\rho_1-\rho}} 
( \tfrac{\rho_1}{\rho_1-\rho} \sum\limits_{k=1}^n \tau_k \|R^k\| )^2
\le t_n e^ {\tfrac{\rho\rho_1t_n}{\rho_1-\rho}}(\frac{\rho_1}{\rho_1-\rho})^2 \sum\limits_{k=1}^n \tau_k \|R^k\|^2.\vspace{0.15cm}
\end{equation}
Substituting \eqref{eq:vn1} into \eqref{eq:sta2} yields
\vspace{0.15cm}
\begin{equation}\label{202606061}
\begin{split}
~ &\sum\limits_{k=1}^n \frac{\|e^k - e^{k-1}\|^2}{2\tau_k}
+ \frac{\sigma(\alpha)}{\Gamma(3-\alpha)} \sum\limits_{k=1}^n \frac{\|e^k - e^{k-1}\|^2}{\tau_k^{\alpha}}
+ \|e^n\|_1^2 \\[4pt]
\leq\; & ( |\mu|t_n e^ {\tfrac{\rho\rho_1t_n}{\rho_1-\rho}}(\tfrac{\rho_1}{\rho_1-\rho})^2   + 1 ) \sum\limits_{k=1}^n \tau_k \|R^k\|^2.
\end{split}
\end{equation}
Combining \eqref{202606061} with Lemma~\ref{lem:accumulated_R} completes the proof.
\end{proof}
\begin{remark}
From Theorem~\ref{thm:semi}, we obtain $\|e^n\|_1 \le \sqrt{C_3}(\varepsilon + M^{-2})$, which shows that the semidiscrete scheme \eqref{semi} achieves second-order temporal convergence in the $H^1$-norm. This convergence rate is higher than that of the method in \cite{cao2020}, as will be validated by numerical experiments in Section~\ref{sec:numerical}.
\end{remark}

\section{ $H^1$-norm convergence analysis to a full discretization scheme}
\label{sec:spectral_err}

In this section, based on \eqref{semi} and spectral collocation method in space,
a full discretization scheme for \eqref{eq:v-problem} is presented, and its convergence analysis is also provided.
Before presenting the full discretization scheme, the spectral collocation method given in \cite{Shen2011,Trefethen2000} is introduced, where we take the two-dimensional case as an illustrative example.
\begin{enumerate}[leftmargin=2em]
    \item ({\bf Legendre-Gauss-Lobatto nodes and weights}) Consider the rectangular domain
    \begin{equation*}
        \Omega :=(a,b)\times(c,d).
    \end{equation*}
    The collocation points along the $x-$axis are given by the affine transformation 
\begin{equation}\label{trans}
        x_i=\frac{b-a}{2}\xi_i+\frac{a+b}{2},
        \qquad 0\le i \le N,
    \end{equation}
    where $\{\xi_i\}^{N}_{i=0}$ represents the Legendre-Gauss-Lobatto (LGL) nodes along the $x-$axis. The weights are
\begin{equation}\label{eq2026060701}
     \omega_i
        =
        \frac{2}{N(N+1)}
        \frac{1}{[L_N(\xi_i)]^2},
        \qquad 0\le i\le N,   
    \end{equation}
    where \(L_N\) is the Legendre polynomial of degree \(N\). Similarly, the collocation points and weights along the $y-$axis can be obtained following \eqref{trans}-\eqref{eq2026060701}, i.e.,
\begin{equation*}
    y_j = \frac{d-c}{2}\xi_j + \frac{c+d}{2} \qquad \mathrm{and} \qquad  \omega_j
        =
        \frac{2}{N(N+1)}
        \frac{1}{[L_N(\xi_j)]^2},
\end{equation*}
where $\{\xi_j\}^{N}_{j=0}$ represents the Legendre-Gauss-Lobatto (LGL) nodes along the $y-$axis. 
    \vspace{0.1cm}
    \item ({\bf Differentiation matrix}) Let $D$ be the LGL differentiation matrix on \([-1,1]\), whose entries are given by
    \begin{equation*}
        (D)_{ij}
    =
    \begin{cases}
    \displaystyle
    \frac{L_N(\xi_i)}{L_N(\xi_j)}
    \frac{1}{\xi_i-\xi_j},
    & i\ne j,\\[5mm]
    \displaystyle
    -\frac{N(N+1)}{4},
    & i=j=0,\\[4mm]
    \displaystyle
    \frac{N(N+1)}{4},
    & i=j=N,\\[4mm]
    0,
    & i=j,\ 1\le i\le N-1.
    \end{cases}\vspace{0.05cm}
    \end{equation*}
     The corresponding second-order differentiation matrices on $\Omega$ along $x-$ and $y-$ axes are
    \begin{equation*}
        D_x^{(2)}
        =
        \left(\frac{2}{b-a}\right)^2 D^2
        \qquad \mathrm{and}  \qquad
        D_y^{(2)}
        =
        \left(\frac{2}{d-c}\right)^2 D^2.
    \end{equation*}
As a result, the Laplacian operator can be written as
\begin{equation*}
        \Delta_N
        :=
        I \otimes D_x^{(2)}
        +
        D_y^{(2)}\otimes I,
    \end{equation*}
where $\otimes$ represents the tensor-product and $I$ represents the identity matrix. 
    \item ({\bf Discrete 
 inner product}) Let $\mathbb Q_N(\Omega)$ be the tensor-product polynomial space of degree at
    most \(N\) in each variable. We denote
    $\mathbb Q_N^0(\Omega) := \mathbb Q_N(\Omega)\cap H_0^1(\Omega)$. For $\forall u,v\in \mathbb Q_N(\Omega)$, 
    the discrete inner product is 
\begin{equation}\label{norm_n}
 (u,v)_N :=
\frac{(b-a)(d-c)}{4}\sum_{i=0}^{N}\sum_{j=0}^{N}
u(x_i,y_j)v(x_i,y_j)\omega_{i}\omega_{j}.
\end{equation}
The reader is referred to \cite{Shen2011} for more details.

\end{enumerate}

Next, based on \eqref{semi} and the spectral collocation method described above, a full discretization scheme for \eqref{eq:v-problem} is obtained as follows:
\begin{equation}\label{eq:fast_scheme_weak}
(\nabla_\tau v_N^n,\phi_N)_N
    	+
(\bar{\partial}_{\tau,\varepsilon}^\alpha v_N^n,\phi_N)_N
    	+
    	(\nabla v_N^{n-\frac12},\nabla\phi_N)_N
    	=
    	(\mu v_N^{n-\frac12},\phi_N)_N+(\bar f^{\,n-\frac12},\phi_N)_N,\vspace{0.1cm}
    \end{equation}
    where $\forall \phi_N\in\mathbb Q_N^0(\Omega)$. Before presenting $H^1$-norm  convergence analysis for \eqref{eq:fast_scheme_weak}, the following preliminaries are provided.
\begin{itemize}[leftmargin=2em]
    \item ({\bf Orthogonal projection operator}) Let \(\pi_N^{1,0}:H_0^1(\Omega)\to \mathbb Q_N^0(\Omega)\) be the \(H_0^1\)-orthogonal projection operator, defined by
    \vspace{0.05cm}
    \begin{equation*}
    (\nabla(\varphi-\pi_N^{1,0}\varphi),\nabla \phi_N )=0,
    \quad
    \forall \phi_N\in \mathbb Q_N^0(\Omega).
    \end{equation*}
    \item ({\bf Approximation results}) As shown in \cite{bernardi11992,Shen2011},
     it holds
\begin{equation}\label{eq:spec1}
    \hspace{-5.95cm}(1)\;\|\varphi\| \le \|\varphi\|_N
    	\le C_d\|\varphi\|,
    	\quad
    	\forall \varphi\in \mathbb Q_N(\Omega),\vspace{0.15cm}
    \end{equation} 
    \begin{equation}\label{eq:spec3}
    \hspace{-3.52cm}(2)\;	\|\varphi-\pi_N^{1,0}\varphi\|_l
    	\le cN^{\,l-m}\|\varphi\|_{m},
    	\quad
    	\forall \varphi\in H^m(\Omega)\cap H_0^1(\Omega),\vspace{0.15cm}
    \end{equation}
    \begin{equation}\label{eq:spec2}
    (3)\;|(\varphi,v_N)-(\varphi,v_N)_N|
    	\le cN^{-m}\|\varphi\|_{m} \|v_N\|,
    	\quad
    	\forall \varphi\in H^m(\Omega),\
    	\forall v_N\in \mathbb Q_N(\Omega),\vspace{0.15cm}
    \end{equation}
    where $\|\cdot\|_N = (\cdot, \cdot)^{1/2}_{N}$,  $c$ is a constant independent of $N$, $m \ge 1$ , $l = 0, 1$ and \(C_d>0\) is a constant depending only on the spatial dimension.
\end{itemize}

The $H^1$-norm convergence analysis for \eqref{eq:fast_scheme_weak} is now summarized in the following theorem.

\begin{thm}\label{fully_converge}
Suppose that \(v\) is the exact solution of \eqref{eq:v-problem} with $v$, ${}_0^C D_t^{\alpha} v\in L^\infty(0,T;H^m(\Omega))$ and $\|\partial_t^\ell v\|_{m} \le C\bigl(1+t^{\delta-\ell}\bigr)\;(\alpha\le \delta<2,  \ell=0,1,2)$. Assume that the conditions of
Lemma~\ref{coe2} hold and that \(r\ge 4\) for graded mesh. Then the numerical solution
\(\{v_N^n\}_{n=1}^M\) generated by \eqref{eq:fast_scheme_weak}, with the
initial value \(v_N^0=\pi_N^{1,0}v^0\), satisfies
\begin{equation*}
\begin{split}
\|v(t_n)-v_N^n\|_1\le C_3(\Theta_\delta(M)N^{-m}+N^{1-m}+M^{-2}+\varepsilon),\quad 1\le n \le M,
\end{split}
\end{equation*}
where $C_3$ is a constant independent of $\varepsilon$, $M$ and $N$,
\begin{equation*}
\Theta_\delta(M)=\begin{cases}1,&\qquad \delta>\frac{1}{2},\\[3pt]
(1+\ln M)^{1/2},&\qquad \delta=\frac{1}{2},\\[4pt]
M^{{r}(1-2\delta)/2},&\qquad 0<\delta<\frac{1}{2}.\end{cases}
\end{equation*}
\end{thm}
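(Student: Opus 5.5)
We split the error in the standard way,
\[
v(t_n)-v_N^n=\bigl(v(t_n)-\pi_N^{1,0}v(t_n)\bigr)+\bigl(\pi_N^{1,0}v(t_n)-v_N^n\bigr)=:\eta^n+\theta^n .
\]
By \eqref{eq:spec3} with $l=1$, $\|\eta^n\|_1\le cN^{1-m}\|v(t_n)\|_m\le cN^{1-m}$. This uses the assumption $v\in L^\infty(0,T;H^m(\Omega))$. So it remains to estimate $\theta^n\in\mathbb Q_N^0(\Omega)$, with $\theta^0=0$ because $v_N^0=\pi_N^{1,0}v^0$.

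\textbf{Step 1: error equation for $\theta^n$.} Start from \eqref{average}, tested against $\phi_N$ with the exact inner product. Write the exact terms as $\nabla_\tau v(t_n)+\bar\partial^\alpha_{\tau,\varepsilon}v(t_n)-R_1^n$, and the right-hand side as $\Delta v^{n-\frac12}+\mu v^{n-\frac12}+R_2^n+\bar f^{n-\frac12}$. Subtracting \eqref{eq:fast_scheme_weak}, and using $(\nabla\eta,\nabla\phi_N)=0$, gives for all $\phi_N\in\mathbb Q_N^0(\Omega)$
\[
(\nabla_\tau\theta^n,\phi_N)_N+(\bar\partial^\alpha_{\tau,\varepsilon}\theta^n,\phi_N)_N+(\nabla\theta^{n-\frac12},\nabla\phi_N)_N-\mu(\theta^{n-\frac12},\phi_N)_N=(G^n,\phi_N).
\]
The functional $G^n$ collects three groups of terms:
\begin{itemize}
\item[(a)] $R^n=R_1^n+R_2^n$;
\item[(b)] the projection terms $-\nabla_\tau\eta^n-\bar\partial^\alpha_{\tau,\varepsilon}\eta^n+\mu\eta^{n-\frac12}$;
\item[(c)] quadrature consistency errors $(\psi,\phi_N)-(\psi,\phi_N)_N$ for $\psi$ ranging over $\nabla_\tau v$, $\bar\partial^\alpha v$, $v$ and $\bar f$, together with the gradient-form discrepancy $(\nabla\cdot,\nabla\cdot)_N$ versus $(\nabla\cdot,\nabla\cdot)$.
\end{itemize}
The gradient-form discrepancy is handled by the exactness of LGL quadrature for degree $\le 2N-1$ applied to $\pi_N^{1,0}v$, plus \eqref{eq:spec2}. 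By \eqref{eq:spec2}, each (c) term is bounded by $cN^{-m}\|\psi\|_m\|\phi_N\|$, and \eqref{eq:spec3} with $l=0$ bounds the (b) terms by $cN^{-m}\|\cdot\|_m$.

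\textbf{Step 2: energy estimate.} Repeat the proof of Theorem~\ref{thm:semi} in the discrete inner product. Take $\phi_N=\theta^n-\theta^{n-1}$ and use the norm equivalence \eqref{eq:spec1}. The positivity bound \eqref{eq:con1} carries over, since it is an algebraic property of the coefficients $\widetilde a_{n,k}$ and holds in any inner product. For the $L^2$ term $|\mu|\|\theta^n\|^2$, use the argument of Theorem~\ref{thm:fast_stability} applied to the discrete eigen-expansion of $(\nabla\cdot,\nabla\cdot)_N-\mu(\cdot,\cdot)_N$ on $\mathbb Q_N^0(\Omega)$, with Lemma~\ref{coe2}. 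Together these give
\[
\|\theta^n\|_1^2\le C\sum_{k=1}^n\tau_k\|G^k\|^2 .
\]
Lemma~\ref{lem:accumulated_R} bounds the $R^k$ contribution by $C(\varepsilon+M^{-2})^2$. The pieces of $G^k$ involving $v$, $\bar\partial^\alpha v$, $v^{n-\frac12}$ and $\bar f$ are uniformly $O(N^{-m})$, using ${}_0^CD_t^\alpha v\in L^\infty(H^m)$ and the fact that $\bar\partial^\alpha_{\tau,\varepsilon}v=\frac1{\tau_n}\int{}_0^CD^\alpha_t v+R_1^n$.

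\textbf{Step 3 (main obstacle): the difference-quotient terms.} The difficult terms are $\nabla_\tau\eta^k$ and $\nabla_\tau v(t_k)$ inside the quadrature error. These produce
\[
N^{-2m}\sum_k\tau_k\Bigl\|\tfrac1{\tau_k}\textstyle\int_{t_{k-1}}^{t_k}\partial_tv\,dt\Bigr\|_m^2\le N^{-2m}\sum_k\int_{t_{k-1}}^{t_k}\|\partial_tv\|_m^2\,dt\le CN^{-2m}\Bigl(1+\int_{t_1}^{T}t^{2\delta-2}dt+\tau_1^{-1}\Bigl(\int_0^{t_1}t^{\delta-1}dt\Bigr)^2\Bigr).
\]
We evaluate this using $\|\partial_tv\|_m\le C(1+t^{\delta-1})$ and the first-interval bound $\tau_1^{2\delta-1}$. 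The total is bounded for $\delta>\frac12$, is $O(\ln M)$ for $\delta=\frac12$, and for $\delta<\frac12$ is dominated by $t_1^{2\delta-1}=T^{2\delta-1}M^{r(1-2\delta)}$. Taking square roots gives exactly the factor $\Theta_\delta(M)N^{-m}$.

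The term $\bar\partial^\alpha_{\tau,\varepsilon}\eta^k$ is the most delicate. We plan to avoid bounding it directly by moving the projection under the operator, writing $\bar\partial^\alpha_{\tau,\varepsilon}\eta^k=(I-\pi_N^{1,0})\bar\partial^\alpha_{\tau,\varepsilon}v$. Since $\pi_N^{1,0}$ commutes with the time discretization, this combines with the representation above to give $O(N^{-m})$ plus $N^{-m}\|R_1^k\|_m$. Collecting $\|\eta^n\|_1$ and $\|\theta^n\|_1$ then yields the stated bound.
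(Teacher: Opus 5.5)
Your proposal is correct and follows essentially the paper's route: the same splitting through $\pi_N^{1,0}$, the energy argument of Theorem~\ref{thm:semi} carried out in $(\cdot,\cdot)_N$, the identity $\bar\partial^\alpha_{\tau,\varepsilon}\eta^n=(I-\pi_N^{1,0})\bigl(\tfrac1{\tau_n}\int_{t_{n-1}}^{t_n}{}_0^CD_t^\alpha v\,dt+R_1^n\bigr)$ for the memory term, and the difference-quotient sum that produces $\Theta_\delta(M)$. The only difference is cosmetic: you bound that sum by Cauchy--Schwarz on each subinterval, whereas the paper uses a mean-value bound on $(t_k^\delta-t_{k-1}^\delta)/\tau_k$; note, however, that your intermediate quantity $\sum_k\int_{t_{k-1}}^{t_k}\|\partial_tv\|_m^2\,dt$ may be infinite for $\delta\le\frac12$, so that step should be restricted to $k\ge2$, which your final expression already does.
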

\begin{proof}
We denote
\begin{equation}\label{notation1}
\begin{split}
&e_N^n:=v_N^n-\pi_N^{1,0}v(t_n),
\quad\qquad
e_N^{n-\frac12}:=v_N^{n-\frac12}-\tfrac{1}{2}\pi_N^{1,0}(v(t_n)+v(t_{n-1})),\\[4pt]
&\partial_t^\alpha v(t_n)
:=\frac{1}{\tau_n}\int_{t_{n-1}}^{t_n}{}_0^C D_t^\alpha v(x,t)\,dt,
\quad\quad
v(t_{n-\frac12})
:=\frac{1}{\tau_n}\int_{t_{n-1}}^{t_n}v(x,t)\,dt.\\[2pt]
\end{split}
\end{equation}
 Based on \eqref{eq:fast_scheme_weak} and \eqref{notation1}, 
we derive for $\forall \phi_N\in\mathbb{Q}_N^0(\Omega)$,
\begin{equation}\label{eq:fully_error_eq}
\begin{split}
    &(\nabla_\tau e_N^n,\phi_N)_N
+(\bar{\partial}_{\tau,\varepsilon}^\alpha e_N^n,\phi_N)_N
+(\nabla e_N^{n-\frac12},\nabla\phi_N)_N
-(\mu e_N^{n-\frac12},\phi_N)_N \\[5pt]
=\;&\varepsilon_1^n(\phi_N)+\varepsilon_2^n(\phi_N)+\varepsilon_3^n(\phi_N),
\end{split}
\end{equation}
where\vspace{-0.15cm}
\begin{equation}\label{ee3}
\varepsilon_1^n(\phi_N)
:=
\bigl(\pi_N^{1,0}R_2^n,\phi_N\bigr)_N,    \qquad  \varepsilon_2^n(\phi_N)
:=
\bigl(\bar{\partial}_{\tau,\varepsilon}^\alpha v(t_n)
-\bar{\partial}_{\tau,\varepsilon}^\alpha \pi_N^{1,0}v(t_n),\phi_N\bigr)_N,
\end{equation}
\begin{equation}\label{ee2}
\begin{split}
    \varepsilon_3^n(\phi_N)
:=&
-\bigl(\bar{\partial}_{\tau,\varepsilon}^\alpha v(t_n),\phi_N\bigr)_N
-\bigl(\nabla_\tau \pi_N^{1,0}v(t_n),\phi_N\bigr)_N\\[5pt]
&-\bigl(\nabla \pi_N^{1,0}v(t_{n-\frac12}),\nabla \phi_N\bigr)_N
+\bigl(\mu\pi_N^{1,0}v(t_{n-\frac12}),\phi_N\bigr)_N+(\bar f^{n-\frac{1}{2}},\phi_N).\\[3pt]
\end{split} 
\end{equation}
Applying \eqref{eq:spec3}-\eqref{eq:spec2} to \eqref{ee3} yields
\begin{equation}\label{final3}
|\varepsilon_1^n(\phi_N)|
\le (cN^{-m}\|R_2^n\|_m+\|R_2^n\|)\,\|\phi_N\|,\vspace{0.1cm}
\end{equation}
where $R_2^n$ is defined in \eqref{eq:error2}. Based on \eqref{eq:error1} and \eqref{eq:spec2}, we derive
\vspace{0.05cm}
\begin{equation}\label{finale1_1}
\begin{split}
|\varepsilon_2^n(\phi_N)|
=\;&
|
((I_d-\pi_N^{1,0})(\frac{1}{\tau_n}\int_{t_{n-1}}^{t_n} {}_0^C D_t^{\alpha} v\, dt-R_1^n),\phi_N)_N
|\\[5pt]
\le \;&  \|(I_d-\pi_N^{1,0})(\frac{1}{\tau_n}\int_{t_{n-1}}^{t_n} {}_0^C D_t^{\alpha} v\, dt-R_1^n)\|\|\phi_N\|\\[5pt]
&+
c N^{-m}
\|(I_d-\pi_N^{1,0})(\frac{1}{\tau_n}\int_{t_{n-1}}^{t_n} {}_0^C D_t^{\alpha} v\, dt-R_1^n
)\|_{m}\|\phi_N\|,\\[3pt]
\end{split}
\end{equation}
where $I_d$ denotes the identity operator and $R_1^n$ is defined in \eqref{eq:error1}. With \eqref{eq:spec3}, we further derive from \eqref{finale1_1}
\begin{equation}\label{finale1}
|\varepsilon_2^n(\phi_N)|
\le cN^{-m}(\|{}_0^C D_t^{\alpha} v\|_{L^\infty(0,T;H^m(\Omega))}+\|R_1^n\|_m)\|\phi_N\|.
\end{equation}
According to \eqref{norm_n}, we have for $\forall \phi_N\in\mathbb{Q}_N^0(\Omega)$
\vspace{0.1cm}
\begin{equation}\label{small}
\bigl(\nabla \pi_N^{1,0}v(t_{n-\frac12}),\nabla \phi_N\bigr)_N
=\bigl(\nabla \pi_N^{1,0}v(t_{n-\frac12}),\nabla \phi_N\bigr)
=
\bigl(\nabla v(t_{n-\frac12}),\nabla \phi_N\bigr)
.\vspace{0.1cm}
\end{equation}
Substituting \eqref{small} to \eqref{ee2} yields
\vspace{0.1cm}
\begin{equation}\label{26051811}
\begin{split}
\varepsilon_3^n(\phi_N)
=\;&
-\bigl(\bar{\partial}_{\tau,\varepsilon}^\alpha v(t_n),\phi_N\bigr)_N
-\bigl(\nabla_\tau \pi_N^{1,0}v(t_n),\phi_N\bigr)_N\\[6pt]
\;&-\bigl(\nabla v(t_{n-\frac12}),\nabla \phi_N\bigr)
+\bigl(\mu\pi_N^{1,0}v(t_{n-\frac12}),\phi_N\bigr)_N+(\bar f^{n-\frac{1}{2}},\phi_N).\\[3pt]
\end{split}
\end{equation}
With \eqref{average}, \eqref{26051811} becomes
\begin{equation}\label{2605181}
\begin{split}
\varepsilon_3^n(\phi_N)
=\;&-\bigl(\bar{\partial}_{\tau,\varepsilon}^\alpha v(t_n),\phi_N\bigr)_N-\bigl(\nabla_\tau \pi_N^{1,0}v(t_n),\phi_N\bigr)_N+\bigl(\mu\pi_N^{1,0}v(t_{n-\frac12}),\phi_N\bigr)_N\\[6pt]
\;&
+\bigl(\frac{1}{\tau_n}\int_{t_{n-1}}^{t_n} {}_0^C D_t^{\alpha} v\, dt,\phi_N\bigr) +\bigl(\nabla_\tau v(t_n),\phi_N\bigr)-\bigl(\mu v(t_{n-\frac12}),\phi_N\bigr).
\end{split}
\end{equation}
By adding and subtracting
\((\pi_N^{1,0}\nabla_\tau v(t_n),\phi_N)\), we derive from \eqref{eq:spec3}-\eqref{eq:spec2}
\begin{equation}\label{eps22_split}
\begin{aligned}
\;&\bigl|\bigl(\nabla_\tau v(t_n),\phi_N\bigr)-\bigl(\nabla_\tau \pi_N^{1,0}v(t_n),\phi_N\bigr)_N\bigl|\\[6pt]
=\;&
\bigl(
(I_d-\pi_N^{1,0})\nabla_\tau v(t_n),\phi_N
\bigl)+
\bigl(
\pi_N^{1,0}\nabla_\tau v(t_n),\phi_N
\bigl)
-
\bigl(
\pi_N^{1,0}\nabla_\tau v(t_n),\phi_N
\bigl)_N \\[6pt]
\le \;&
C N^{-m}
\|\nabla_\tau v(t_n)\|_{m}
\|\phi_N\|
\le
\,C N^{-m}
(
\frac{1}{\tau_n}
\int_{t_{n-1}}^{t_n}
\|\partial_t v(t)\|_{m}\,dt
)
\|\phi_N\|\\
\le \;& \frac{C}{\tau_n}N^{-m}({t_n^{\delta}-t_{n-1}^\delta})\|\phi_N\|.
\end{aligned}
\end{equation}
Similar to \eqref{finale1}, substituting \eqref{eps22_split} into \eqref{2605181} and applying \eqref{eq:spec3}-\eqref{eq:spec2} yields
\begin{equation}\label{finale2}
\begin{split}
|\varepsilon_3^n(\phi_N)|
\le \;&
CN^{-m}\Bigl(
\|{}_0^C D_t^{\alpha} v\|_{L^\infty(0,T;H^m(\Omega))}
+ \|v\|_{L^\infty(0,T;H^m(\Omega))}\\[4pt]
\;&+\tfrac{1}{\tau_n}({t_n^{\delta}-t_{n-1}^\delta})+\|R_1^n\|_m
\Bigr)\|\phi_N\|+\|R_1^n\|\|\phi_N\|.\\[4pt]
\end{split}
\end{equation}
Combining \eqref{final3}, \eqref{finale1} and \eqref{finale2} yields
\vspace{0.15cm}
\begin{equation}\label{eq:rhs_estimate}
\begin{split}
    &|\varepsilon_1^n(\phi_N)|
+|\varepsilon_2^n(\phi_N)|
+|\varepsilon_3^n(\phi_N)|\\[5pt]
\le \;&
cN^{-m}\Bigl(
\|{}_0^C D_t^{\alpha} v\|_{L^\infty(0,T;H^m(\Omega))}+\|R_1^n\|_m+\|R_2^n\|_m
\\[4pt]
\;& +\tfrac{1}{\tau_n}({t_n^{\delta}-t_{n-1}^\delta})
\Bigr)\|\phi_N\|+(\|R_1^n\|+\|R_2^n\|)\|\phi_N\|.\\[4pt]
\end{split}
\end{equation}
Following the same argument as that of Theorem~\ref{thm:semi}, we obtain from \eqref{eq:spec1}, \eqref{eq:fully_error_eq} and \eqref{eq:rhs_estimate}
\vspace{0.1cm}
\begin{equation}\label{eq:final_err}
\begin{split}
&\sum\limits_{k=1}^n\frac{1}{2\tau_k}{\|e_N^k-e_N^{k-1}\|^2}
+\frac{\sigma(\alpha)}{2\Gamma(3-\alpha)}
\sum\limits_{k=1}^n\frac{1}{\tau_k^\alpha}{\|e_N^k-e_N^{k-1}\|^2}
+\frac12{\|e_N^n\|_1^2}
\\[3pt]
\le \;& \sum\limits_{k=1}^n\frac{1}{2\tau_k}{\|e_N^k-e_N^{k-1}\|_N^2}
+\frac{\sigma(\alpha)}{2\Gamma(3-\alpha)}
\sum\limits_{k=1}^n\frac{1}{\tau_k^\alpha}{\|e_N^k-e_N^{k-1}\|_N^2}
+\frac12{\|\nabla e_N^n\|_N^2}
\\
\le \;&C_d
\sum\limits_{k=1}^n\tau_k
\Bigl(cN^{-m}\Bigl(
\|{}_0^C D_t^{\alpha} v\|_{L^\infty(0,T;H^m(\Omega))}
+\|R_1^k\|_m+\|R_2^k\|_m\\[2pt]
\;&+
\tfrac{1}{\tau_k}({t_k^{\delta}-t_{k-1}^\delta})
\Bigr)+\bigl(\|R_1^k\|+\|R_2^k\| \bigr)
\Bigr)^2.
\end{split}
\end{equation}
 By Lemma~\ref{lem:accumulated_R}, we have
\begin{equation}\label{202660116}
\sum\limits_{k=1}^n\tau_k\|R^k\|_m
\le
C_2(\varepsilon+M^{-2})\qquad \mathrm{and} \qquad\sum\limits_{k=1}^n\tau_k\|R^k\|_m^2
\le
C_2(\varepsilon+M^{-2})^2.
\end{equation}
Based on the graded mesh
$t_k = T (k/M )^r$, we have $\tau_k\le C M^{-r}k^{r-1}$. By the mean value theorem, we derive
\begin{equation}\label{eq:Qk_est}
\sum_{k=1}^n \tau_k \left(\tfrac{t_k^{\delta}-t_{k-1}^\delta}{\tau_k}\right)^2
\le
C
\sum_{k=1}^n
M^{-r}k^{r-1}
\left(M^{-r}k^r\right)^{2\delta-2}
\le C \Theta_\delta(M)^2.\vspace{0.1cm}
\end{equation}
Substituting \eqref{202660116} and \eqref{eq:Qk_est} into \eqref{eq:final_err}, we obtain
\vspace{0.15cm}
\begin{equation}\label{eee1}
    \|e_N^n\|_1
\le CN^{-m}(\|{}_0^C D_t^{\alpha} v\|_{L^\infty(0,T;H^m(\Omega))}+\Theta_\delta(M))+C(\varepsilon+M^{-2}),
 \vspace{0.2cm}
\end{equation}
 where $C$ is a constant independent of $\varepsilon$, $M$ and $N$. Taking \(\varphi=v(t_n)\) in \eqref{eq:spec3} gives
 \begin{equation}\label{eee2}
     \|v(t_n)-\pi_N^{1,0}v(t_n)\|_1\le cN^{1-m}\|v(t_n)\|_m\le cN^{1-m}\|v\|_{L^{\infty}(0,T;H^m(\Omega))}. \vspace{0.1cm}
 \end{equation} 
 Applying the triangle inequality together with \eqref{eee1} and \eqref{eee2}, we obtain
 \vspace{0.15cm}
\begin{equation*}
\begin{split}
\|v(t_n)-v_N^n\|_1\le\;
&\|v(t_n)-\pi_N^{1,0}v(t_n)\|_1+\|e_N^n\|_1\\[6pt]
\le\;
&C_3N^{-m}(\|{}_0^C D_t^{\alpha} v\|_{L^\infty(0,T;H^m(\Omega))}+\Theta_\delta(M))\\[6pt]
&+C_3N^{1-m}\|v\|_{L^\infty(0,T;H^m(\Omega))}+C_3(\varepsilon+M^{-2}),\\[3pt]
\end{split}
\end{equation*}
where $C_3$ is a constant independent of $\varepsilon$, $M$ and $N$. This completes the proof.
\end{proof}
\begin{remark}
Theorem~\ref{fully_converge} shows that the full discretization scheme \eqref{eq:fast_scheme_weak} achieves second-order temporal convergence in the $H^1$-norm, which is higher than that of the method in \cite{cao2020}. Numerical experiments in the next section confirm this result.
\end{remark}

\section{Numerical Experiments}
\label{sec:numerical}
In this section, we test three examples to verify the validity of Theorems~\ref{thm:semi} and ~\ref{fully_converge}, where we choose $r = 4$, $\delta=\alpha$ and $\varepsilon=O(M^{-2})$. Furthermore, based on our discretization scheme \eqref{eq:fast_scheme_weak}, we discuss the effects of the fractional parameters $\alpha$ and $\lambda$ on the solution of \eqref{Equation}. The computations are carried out on a server: Intel(R) Core(TM) i5 CPU at 2.40 GHz and 8 GB of RAM via MATLAB (version R2023b).\vspace{0.2cm}

{\bf Example 1.} As seen in \cite{cao2020,chen2018,zhang2013}, we consider \eqref{Equation} with
\(T=2\), \(\Omega=(0,1)\), \(\phi(x)=x^2(1-x)^2\), and \vspace{0.05cm} 
\begin{equation}
\begin{split}
    f(x,t) =\;& e^{-\lambda t} \bigl\{ ( -\lambda(t^\delta+t^2+1) +\delta t^{\delta-1} +2t +\tfrac{\Gamma(\delta+1)}{\Gamma(\delta-\alpha+1)}t^{\delta-\alpha})   \\[4pt] 
    &x^2(1-x)^2- (t^\delta+t^2+1)  (-4x^3+18x^2-14x+2)  \bigr\}.\\[3pt]
\end{split}
\end{equation}
The exact solution is $u(x,t)=e^{-\lambda t}(t^\delta+t^2+1)x^2(1-x)^2$. For an appropriate choice of $\alpha$, Fig.~\ref{fig:time_plot} shows that our scheme achieves second-order temporal convergence in the $H^1$-norm. This is consistent with the theoretical results in Theorems~\ref{thm:semi} and~\ref{fully_converge} and outperforms the convergence rate reported in \cite{cao2020}. Fig.~\ref{fig:1d_space} plots the errors versus the polynomial degree $N$ with $M=30000$. As expected for spectral methods, the error curves exhibit exponential convergence with respect to $N$, confirming the spatial error estimate in Theorem~\ref{fully_converge}.
  
\vspace{-0.1cm}
\begin{figure}[htbp]
	\begin{minipage}[t]{0.33\linewidth}
  \centering
  \includegraphics[width=\textwidth]{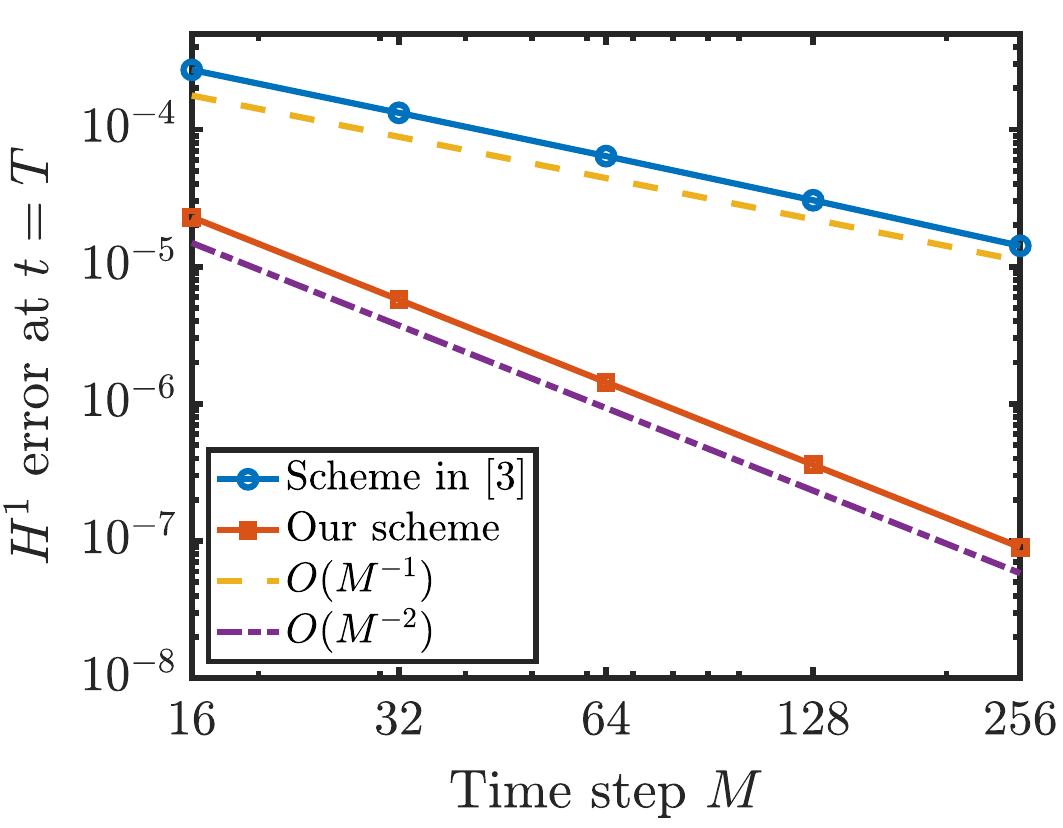}
  \centerline{\quad\;\;(a) $\alpha=0.25$}
	\end{minipage}%
	\begin{minipage}[t]{0.33\linewidth}
   \centering
   \includegraphics[width=\textwidth]{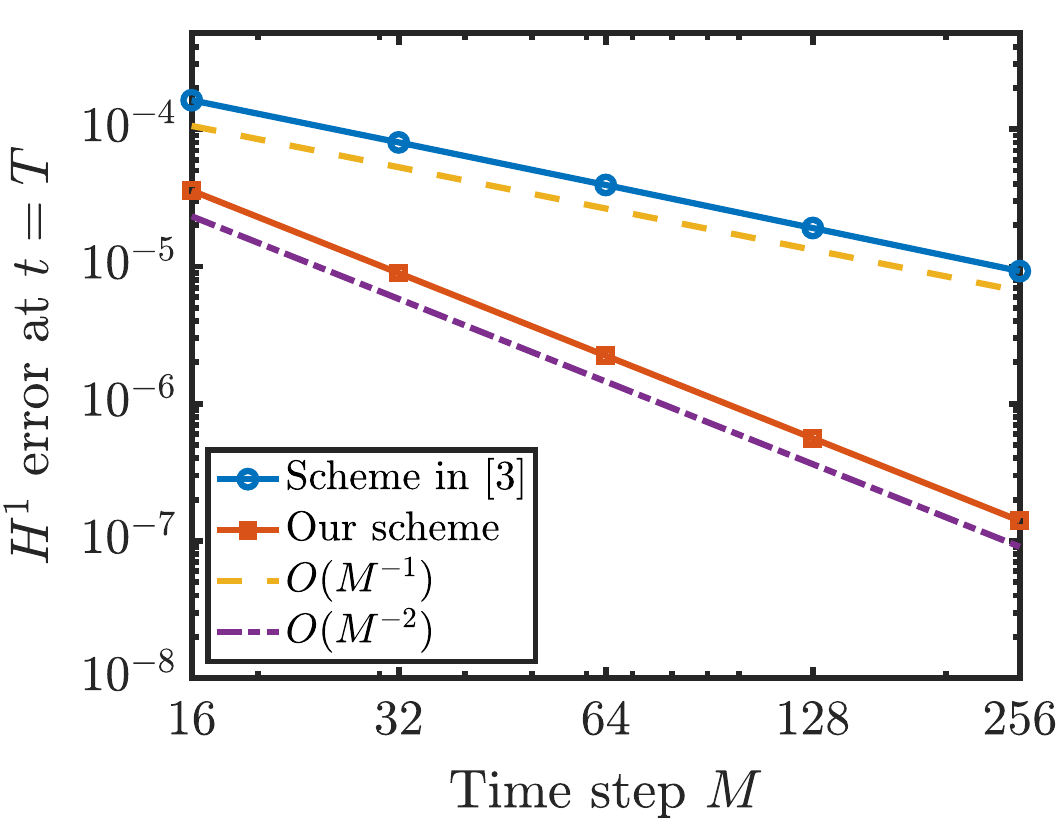}
   \centerline{\quad\;\;(b) $\alpha=0.5$}
	\end{minipage}
    \begin{minipage}[t]{0.33\linewidth}
   \centering
   \includegraphics[width=\textwidth]{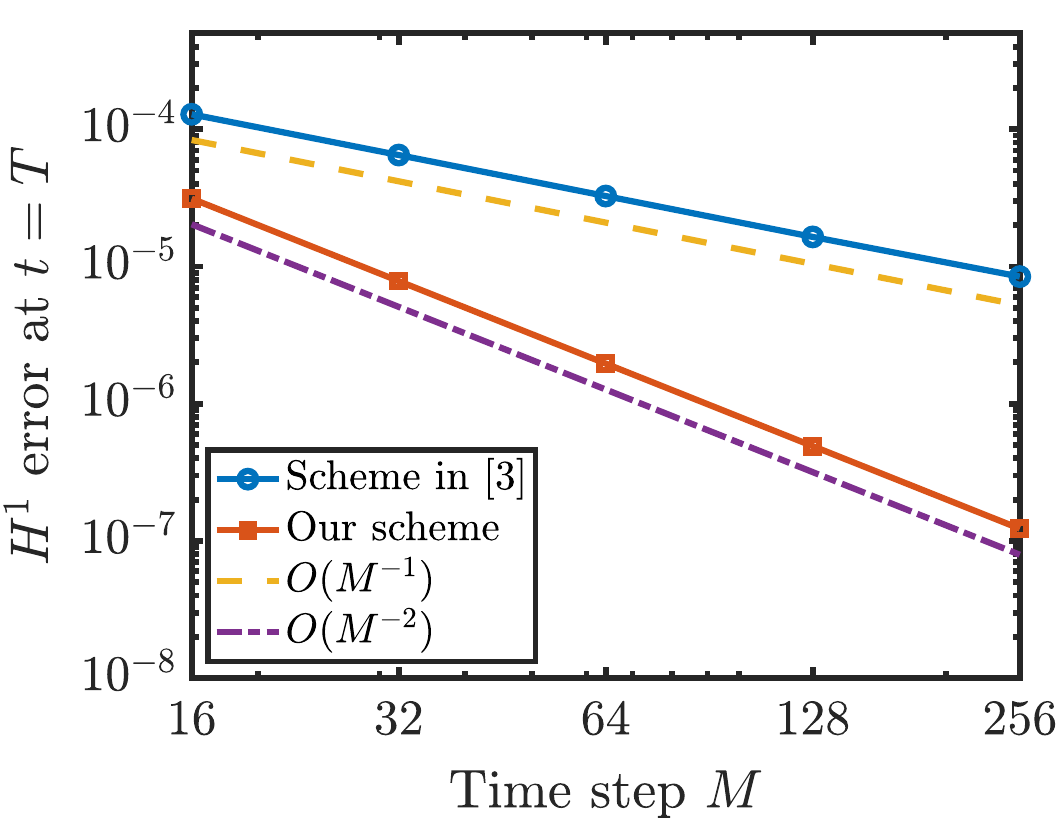}
   \centerline{\quad\;\;(c) $\alpha=0.75$}
	\end{minipage}
    \vspace{-0.3cm}
\caption{A comparison between our scheme \eqref{eq:fast_scheme_weak} and the scheme in \cite{cao2020}.}\label{fig:time_plot}\vspace{-0.5cm}\vspace{-0.3cm}
\end{figure}  

\vspace{-0.3cm}
\begin{figure}[htbp]
    \centering
\includegraphics[width=0.49\textwidth]{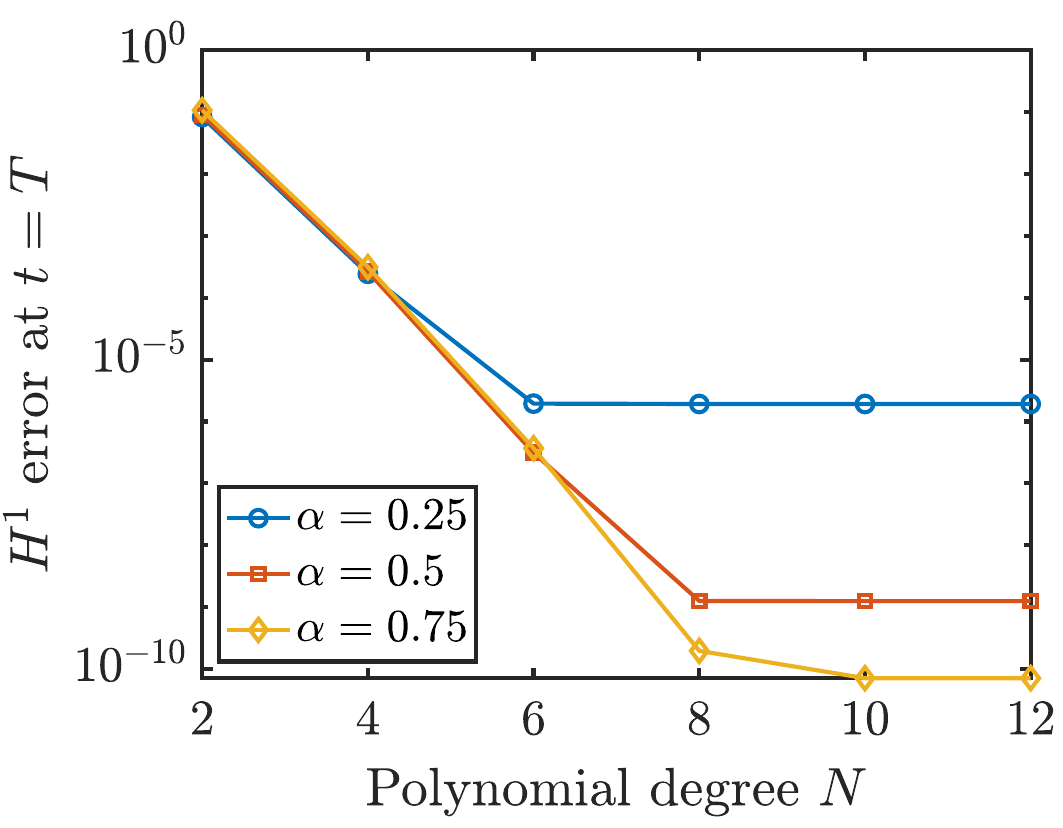}
    \vspace{-0.3cm}
    \caption{Errors vs. polynomial degree $N$ for our scheme \eqref{eq:fast_scheme_weak}.}
    \label{fig:1d_space} \vspace{-0.3cm}
\end{figure}

When the case $\alpha = 1$ is considered, we have from \eqref{Equation} 
\begin{equation}\label{integer}
    2\partial_tu+\lambda u=\Delta u - \sum\limits_{j=1}^d \partial_{x_j}u + f(\boldsymbol{x},t).
\end{equation}
due to $\lim
\limits_{\alpha\to 1^-}{}_0^C D_t^{\alpha,\lambda} u=\partial_tu+\lambda u$. The full discretization scheme \eqref{eq:fast_scheme_weak} becomes the Crank--Nicolson-type discretization. Similar to the case $\alpha \in (0, 1)$, we obtain the same convergence results (see Fig.~\ref{fig:alpha1_1}). In addition, based on our scheme \eqref{eq:fast_scheme_weak}, we further investigate how the solution of \eqref{Equation} varies with the parameters $\alpha$ and $\lambda$. Fig.~\ref{fig:vary1}(a) presents numerical solutions for varying $\lambda$. As $\lambda$ increases, the solutions decay more rapidly in time, reflecting the exponential tempering effect introduced by the Caputo tempered fractional derivative ($\lambda>0$). This behavior agrees with the description in \cite{xia2013}, where the long-time tail transitions from a power-law to an exponentially tempered form. In Fig.~\ref{fig:vary1}(b), we set $\lambda=1$ to examine the effect of $\alpha$. Compared with the case \(\alpha=1\), smaller values of \(\alpha\) lead to a stronger short-time response and a faster long-time decay. Moreover, the numerical solutions tend asymptotically to the solution of the integer-order equation \eqref{integer} as $\alpha \to 1$.

\vspace{-0.2cm}
\begin{figure}[htbp]
    \centering
\includegraphics[width=0.45\textwidth]{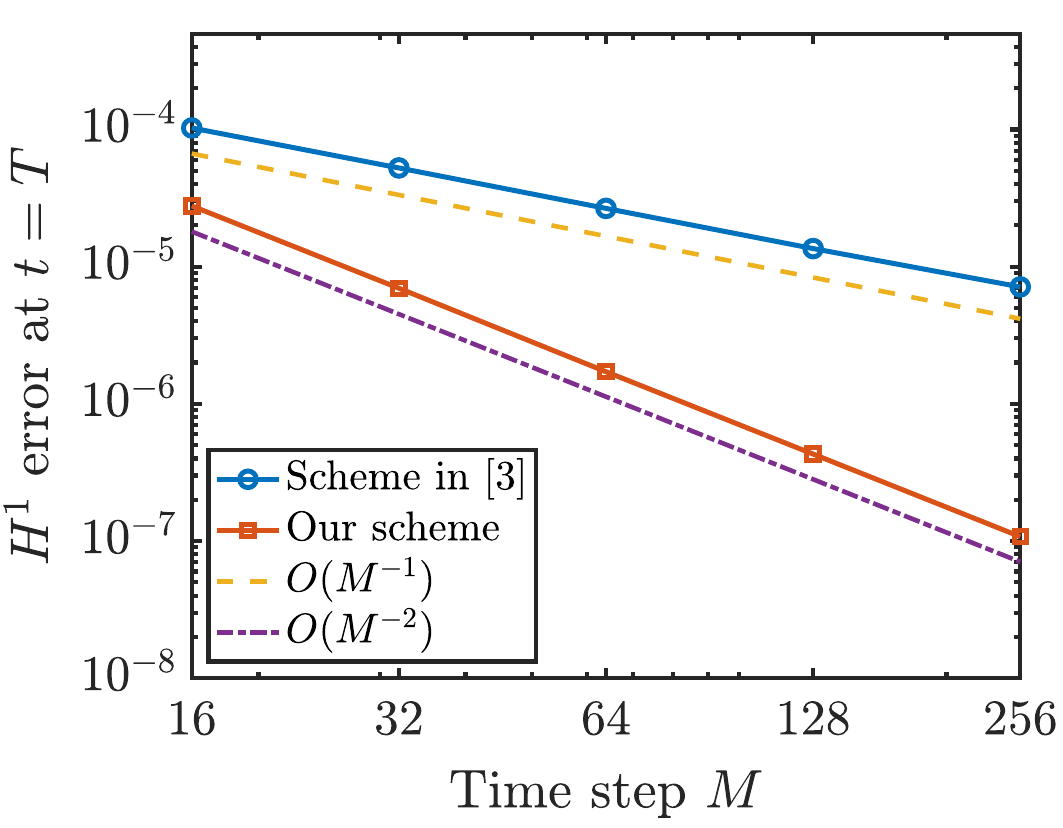}
    \hspace{0.1cm} 
\includegraphics[width=0.45\textwidth]{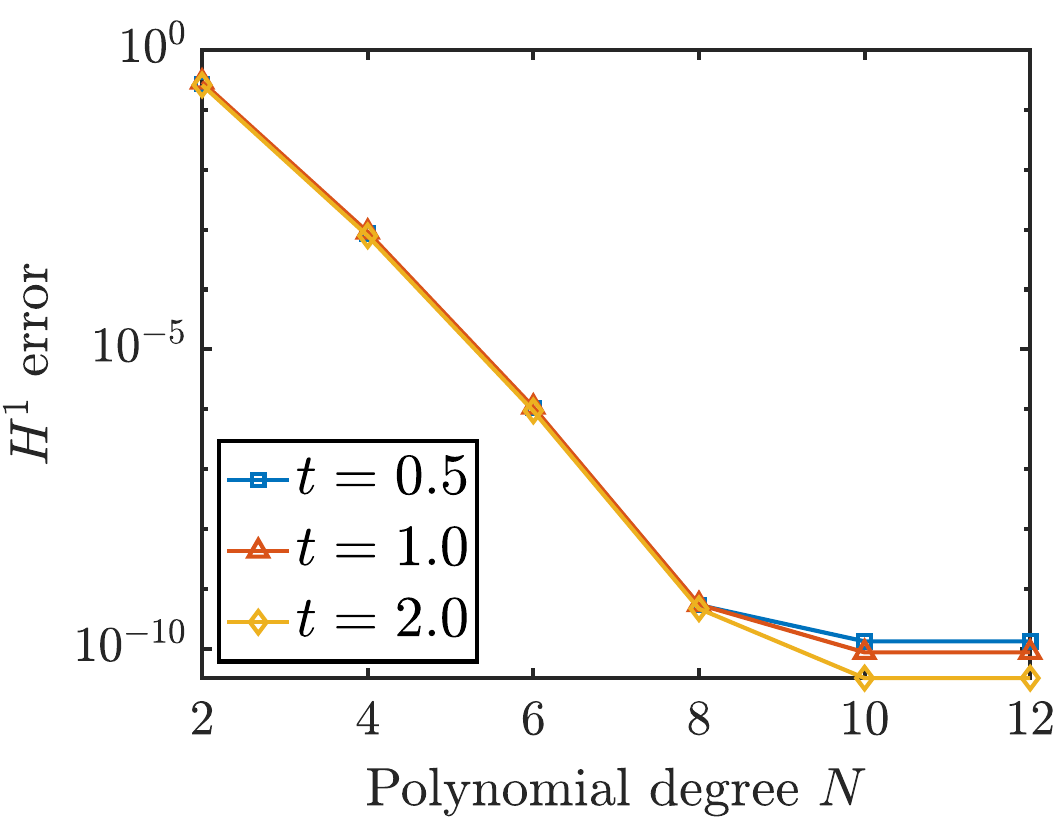}
    \vspace{-0.2cm}
    \caption{Left: Comparison of temporal convergence between our scheme \eqref{eq:fast_scheme_weak} and the scheme in \cite{cao2020}. Right: Errors vs. polynomial degree $N$ for our scheme \eqref{eq:fast_scheme_weak}.}
    \label{fig:alpha1_1}
    \vspace{-0.5cm}
\end{figure}

\vspace{-0.5cm}
\begin{figure}[htbp]
	\begin{minipage}[t]{0.45\linewidth}
  \centering
  \includegraphics[width=\textwidth]{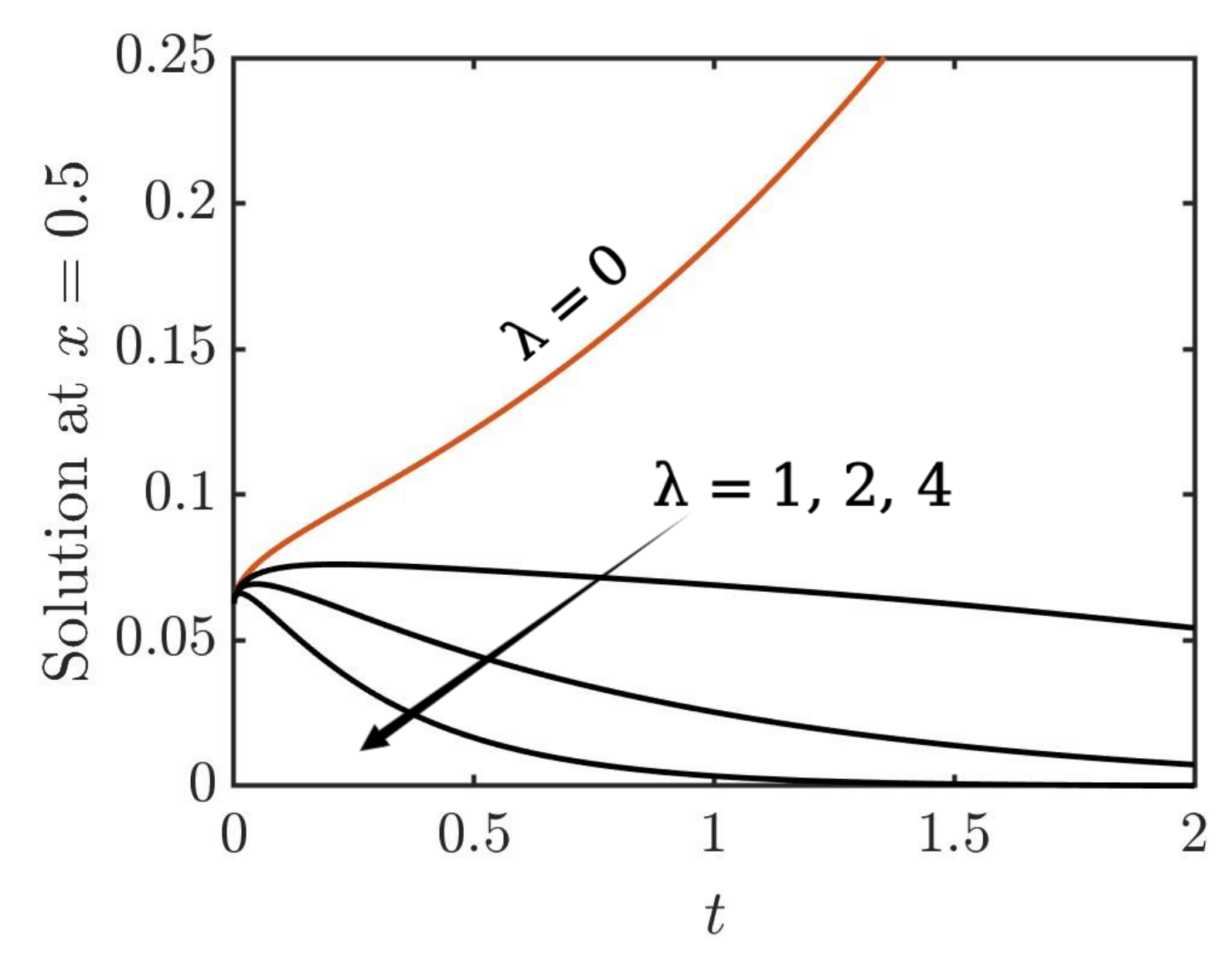}
  \centerline{\qquad(a) $\alpha=0.5$}
	\end{minipage}%
    \hspace{0.3cm} 
	\begin{minipage}[t]{0.45\linewidth}
   \centering
   \includegraphics[width=\textwidth]{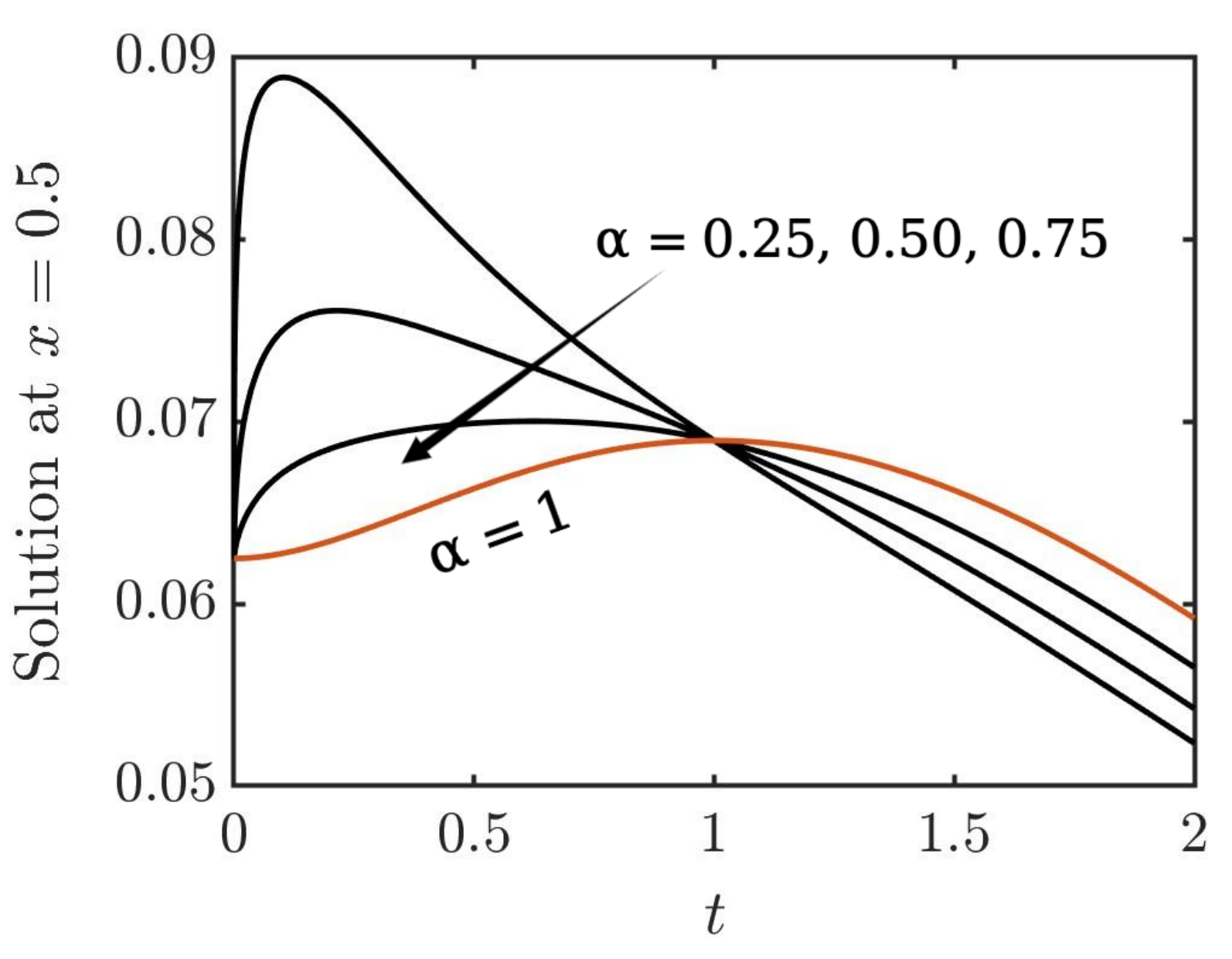}
   \centerline{\qquad(b) $\lambda=1$}
	\end{minipage}
    \vspace{-0.15cm}
\caption{Numerical solutions of \eqref{Equation} with varying $\lambda$ and $\alpha$ at $x=0.5$.}\label{fig:vary1}\vspace{-0.5cm}
\end{figure}

{\bf Example 2.} For \eqref{Equation}, we consider  
\(\Omega=(0,1)\times(0,1)\), \(T=2\), 
\begin{equation*}
\phi(x,y) = 16(x-\tfrac12)^2 (y-\tfrac12)^2 \sin(\pi x)\sin(\pi y) e^{-20((x-\frac12)^2+(y-\frac12)^2)},
\end{equation*}
\begin{equation*}
\begin{split}
f(x,y,t)
=
e^{-\lambda t}
\bigl\{
&
\left(
-t^{\alpha-1}E_{\alpha,\alpha}(-t^\alpha)
-(\lambda+1)E_{\alpha,1}(-t^\alpha)
\right)\phi(x,y)
\\
&-
E_{\alpha,1}(-t^\alpha)
\left(
\phi_{xx}(x,y)+\phi_{yy}(x,y)-\phi_x(x,y)-\phi_y(x,y)
\right)
\bigr\}.
\end{split}
\end{equation*}
The exact solution is
\begin{equation*}
u(x,y,t)=e^{-\lambda t}E_{\alpha,1}(-t^\alpha)\phi(x,y),\vspace{0.1cm}
\end{equation*}
where $E_{\alpha,1}(-t^\alpha)
:=
\sum_{k=0}^{\infty}
\frac{(-t^\alpha)^k}{\Gamma(\alpha k+1)}$. Fig.~\ref{fig:time_plot2} presents the temporal convergence results, demonstrating the second-order convergence rate of our scheme. These results are in good agreement with Theorems~\ref{thm:semi} and~\ref{fully_converge}. Moreover, our scheme achieves significantly higher accuracy than the method in \cite{cao2020}. The second-order temporal convergence in the $H^1$-norm is also retained when \eqref{Equation} reduces to the integer-order case ($\alpha=1$). In Fig.~\ref{fig:2d_space1}, the $H^1$-norm errors are plotted against the polynomial degree $N$, with $M=50$. Clearly, the spectral accuracy for the spectral collocation discretization in space is achieved, confirming the convergence estimate stated in Theorem~\ref{fully_converge}. 
 
\vspace{-0.1cm}
\begin{figure}[htbp]
	\begin{minipage}[t]{0.45\linewidth}
  \centering
  \includegraphics[width=\textwidth]{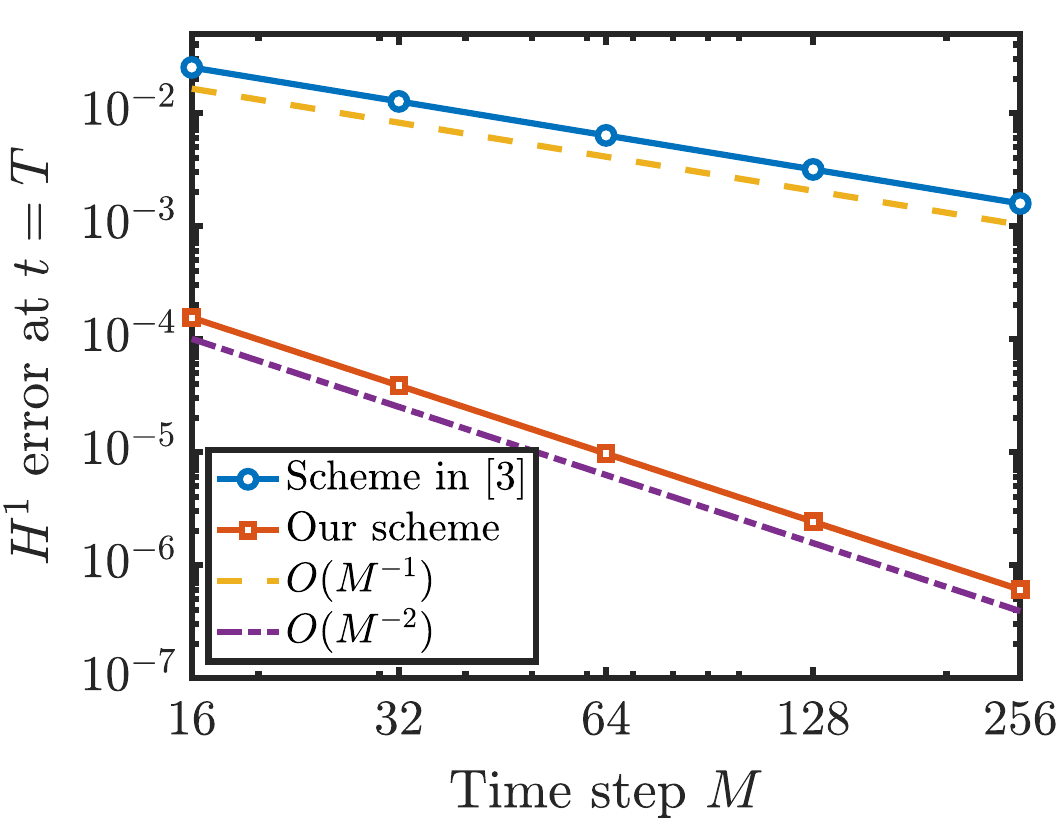}
  \centerline{\qquad\;(a) $\alpha=0.25$}
	\end{minipage}%
    \hspace{0.32cm} 
	\begin{minipage}[t]{0.45\linewidth}
   \centering
   \includegraphics[width=\textwidth]{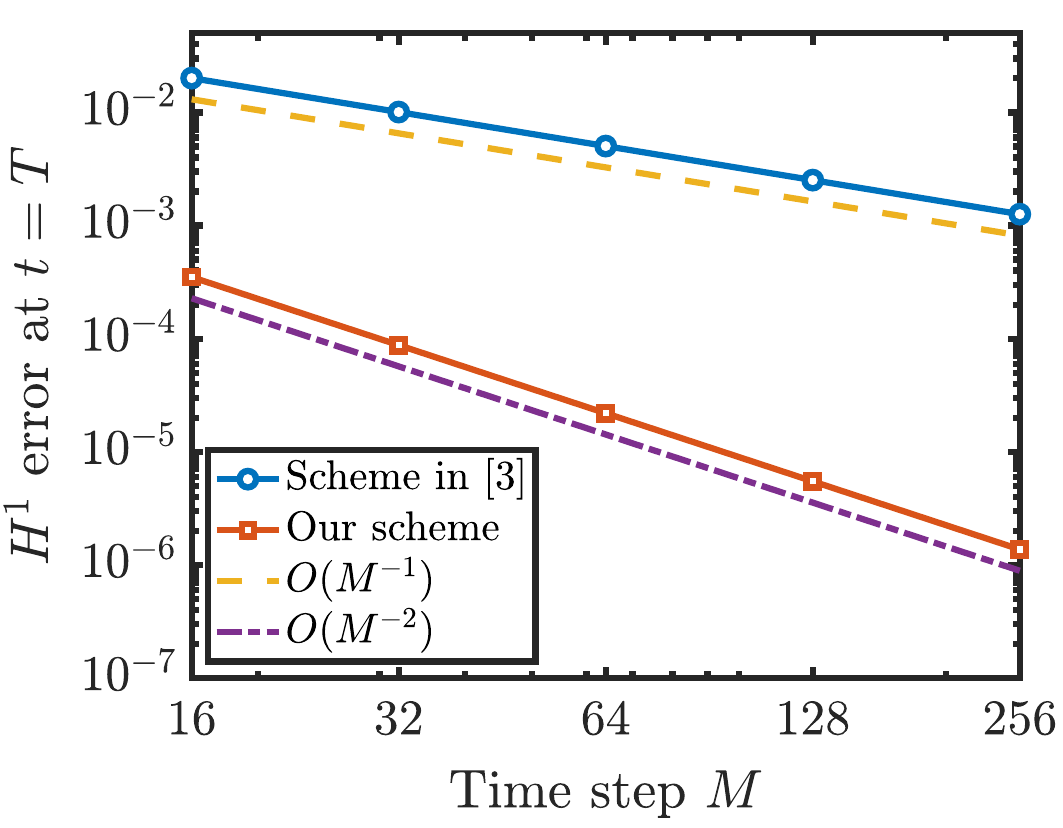}
   \centerline{\qquad\;(b) $\alpha=0.5$}
	\end{minipage}
    \begin{minipage}[t]{0.45\linewidth}
   \centering
   \includegraphics[width=\textwidth]{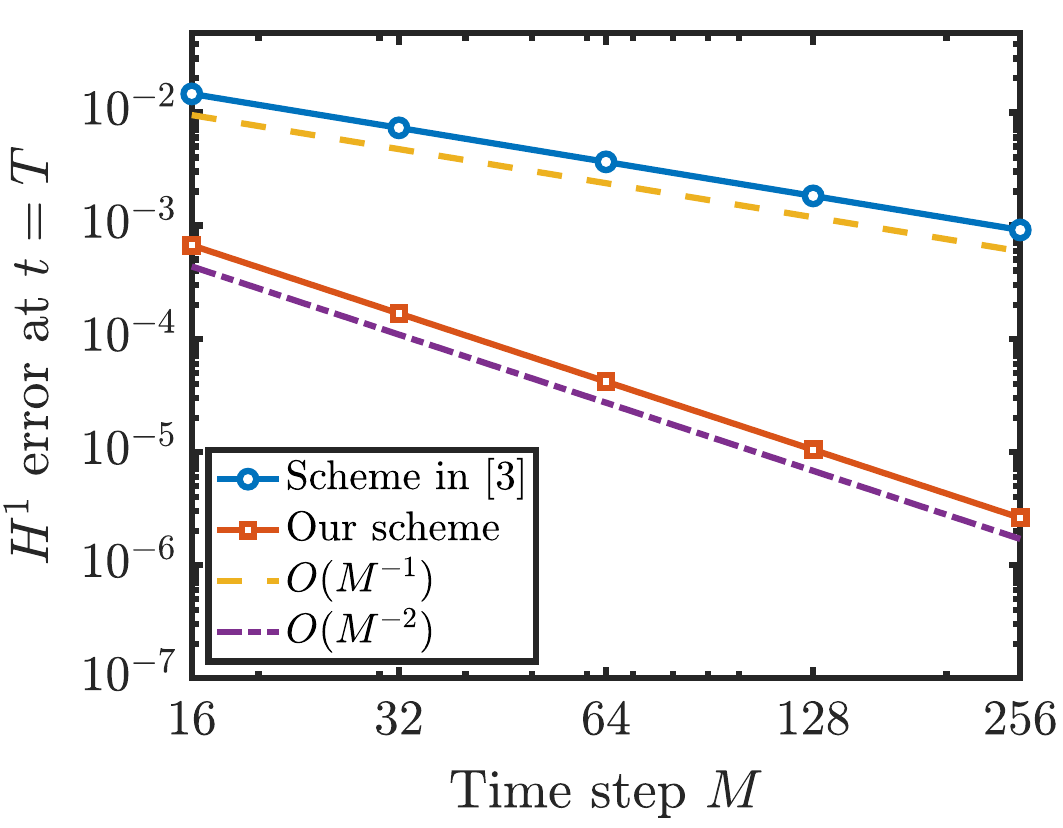}
   \centerline{\qquad\;(c) $\alpha=0.75$}
	\end{minipage}
    \hspace{0.2cm} 
    \begin{minipage}[t]{0.45\linewidth}
   \centering
   \includegraphics[width=\textwidth]{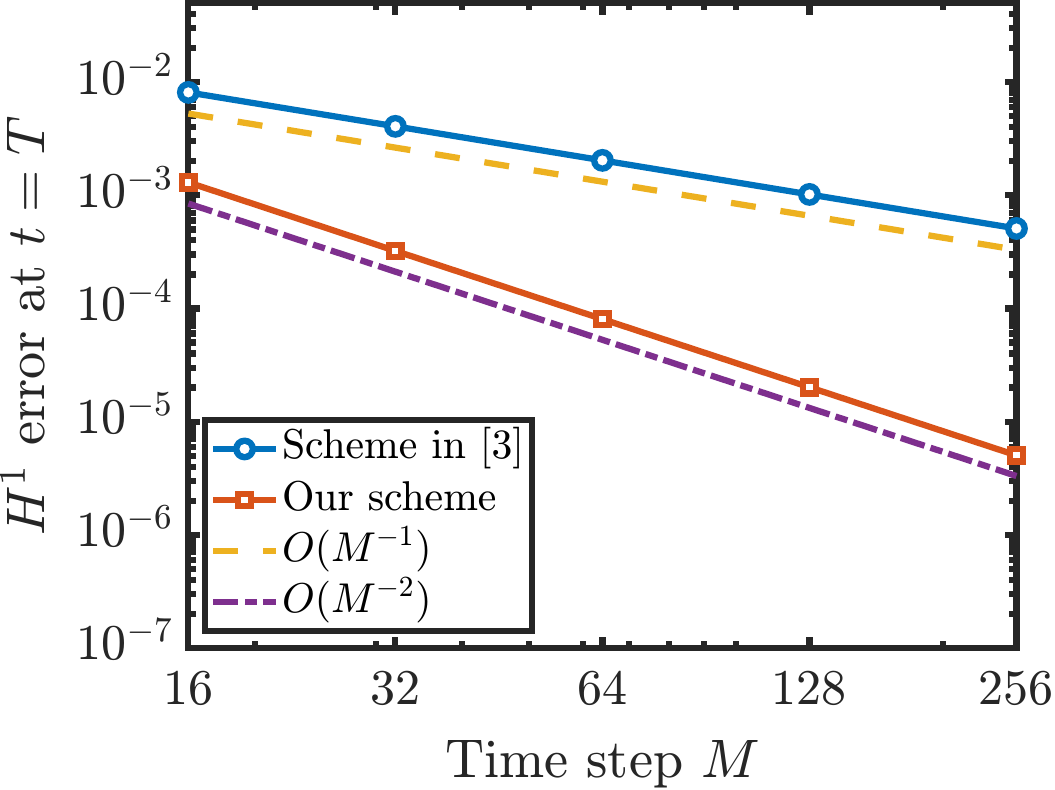}
   \centerline{\qquad\;(d) $\alpha=1.0$}
	\end{minipage}
    \vspace{-0.1cm}
\caption{A comparison between our scheme \eqref{eq:fast_scheme_weak} and the scheme in \cite{cao2020} for {\bf Example 2}.}\label{fig:time_plot2}\vspace{-0.5cm}
\end{figure}  

\begin{figure}[htbp]
    \centering
\includegraphics[width=0.49\textwidth]{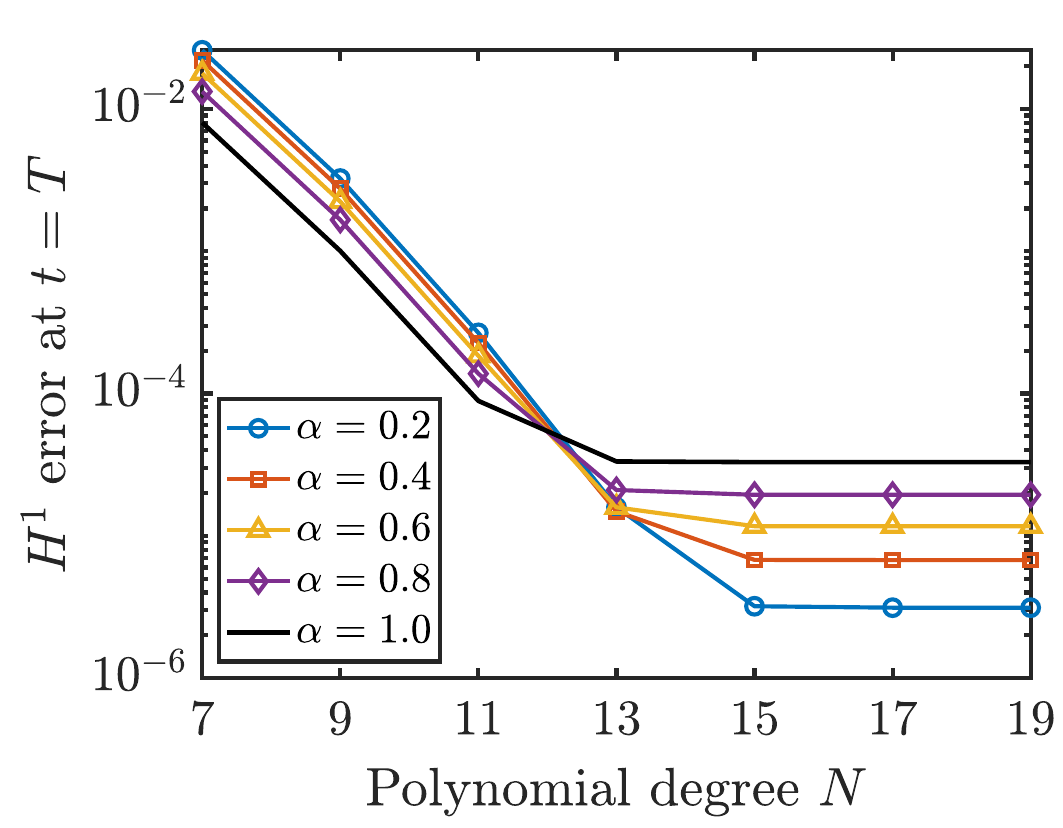}
    \vspace{-0.2cm}
    \caption{ Errors vs. polynomial degree $N$ for our scheme \eqref{eq:fast_scheme_weak} for {\bf Example 2}.}\vspace{-0.3cm}
    \label{fig:2d_space1}
\end{figure}

\begin{figure}[htbp]
    \centering
\includegraphics[width=0.9\textwidth]{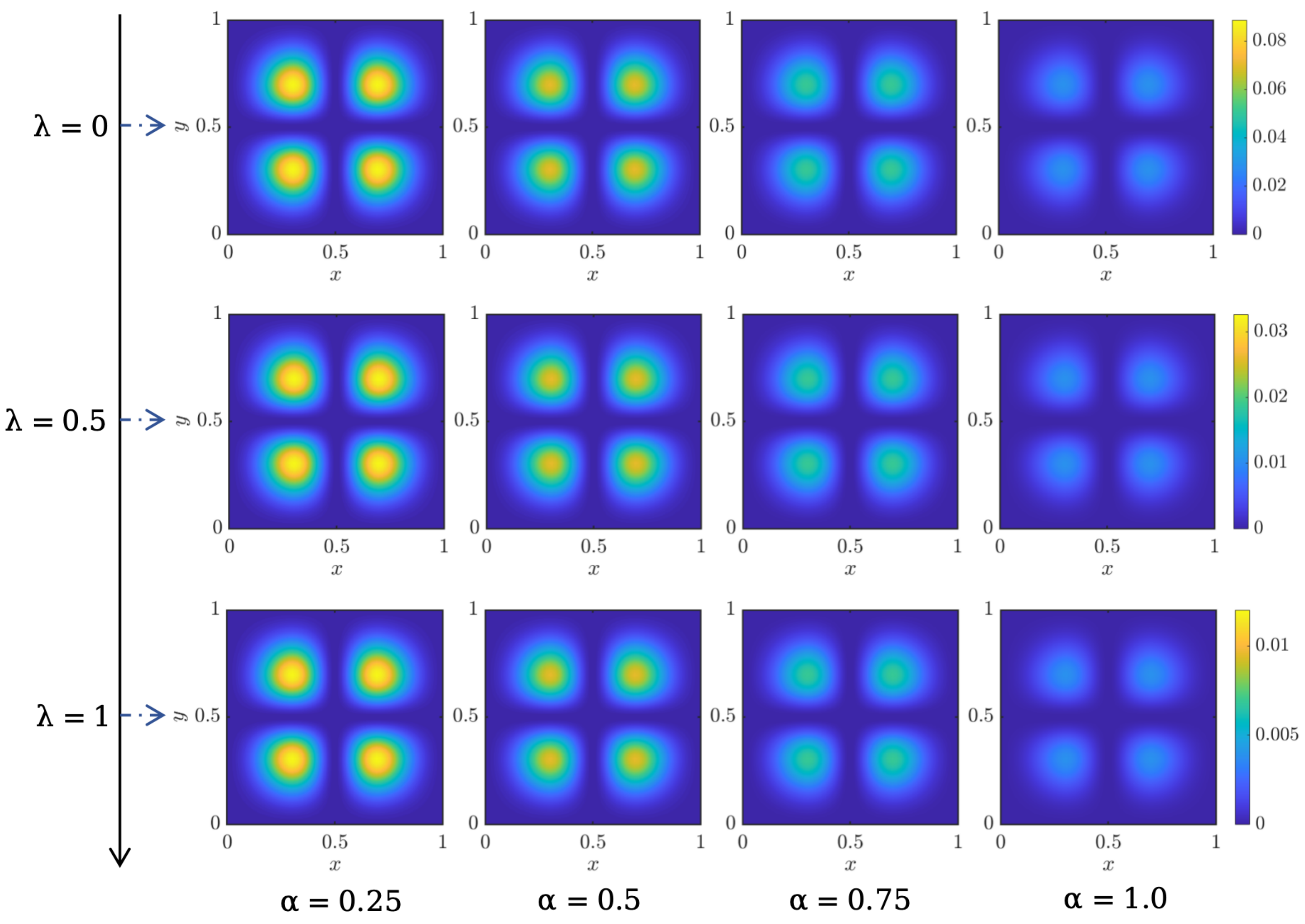}
 \vspace{-0.2cm}
\caption{Numerical solutions of \eqref{Equation} with varying $\lambda$ and $\alpha$ for {\bf Example 2}.}  \vspace{-0.5cm}
\label{fig:2d_lambda1}
\end{figure}

In Fig.~\ref{fig:2d_lambda1}, the effects of the fractional parameters $\alpha$ and $\lambda$ on the solution of \eqref{Equation} are presented. For a vertical comparison, the solution with \(\lambda=0\) has a larger magnitude than those with \(\lambda=0.5\) and \(\lambda=1\), indicating that the solution decays more slowly when \(\lambda \to 0\), while a larger \(\lambda\) leads to stronger temporal decay. These numerical results demonstrate that the fractional parameter \(\lambda\) plays an important role in controlling the long-time decay behavior of the solution.
For a horizontal comparison, the numerical solutions gradually approach the corresponding
integer-order limiting solution when \(\alpha \to 1\). When $\lambda = 1$, Fig.~\ref{fig:2d_vary_alpha} shows the effects of the fractional parameter $\alpha$ on the solution of \eqref{Equation}. Obviously, for \(0<\alpha<1\), \(E_{\alpha,1}(-t^\alpha)\) shows a faster initial
decrease than the exponential function, but decays more slowly for large
\(t\). Consequently, the fractional-order solutions with \(\alpha<1\) may
be smaller than the integer-order solution with \(\alpha=1\) at early times,
whereas they become larger at later times. This behavior reflects the memory
effect and long-time tail of fractional relaxation, which is consistent with
the phenomenon reported in \cite{feng2022}.

\begin{figure}[htbp]
    \centering
\includegraphics[width=0.9\textwidth]{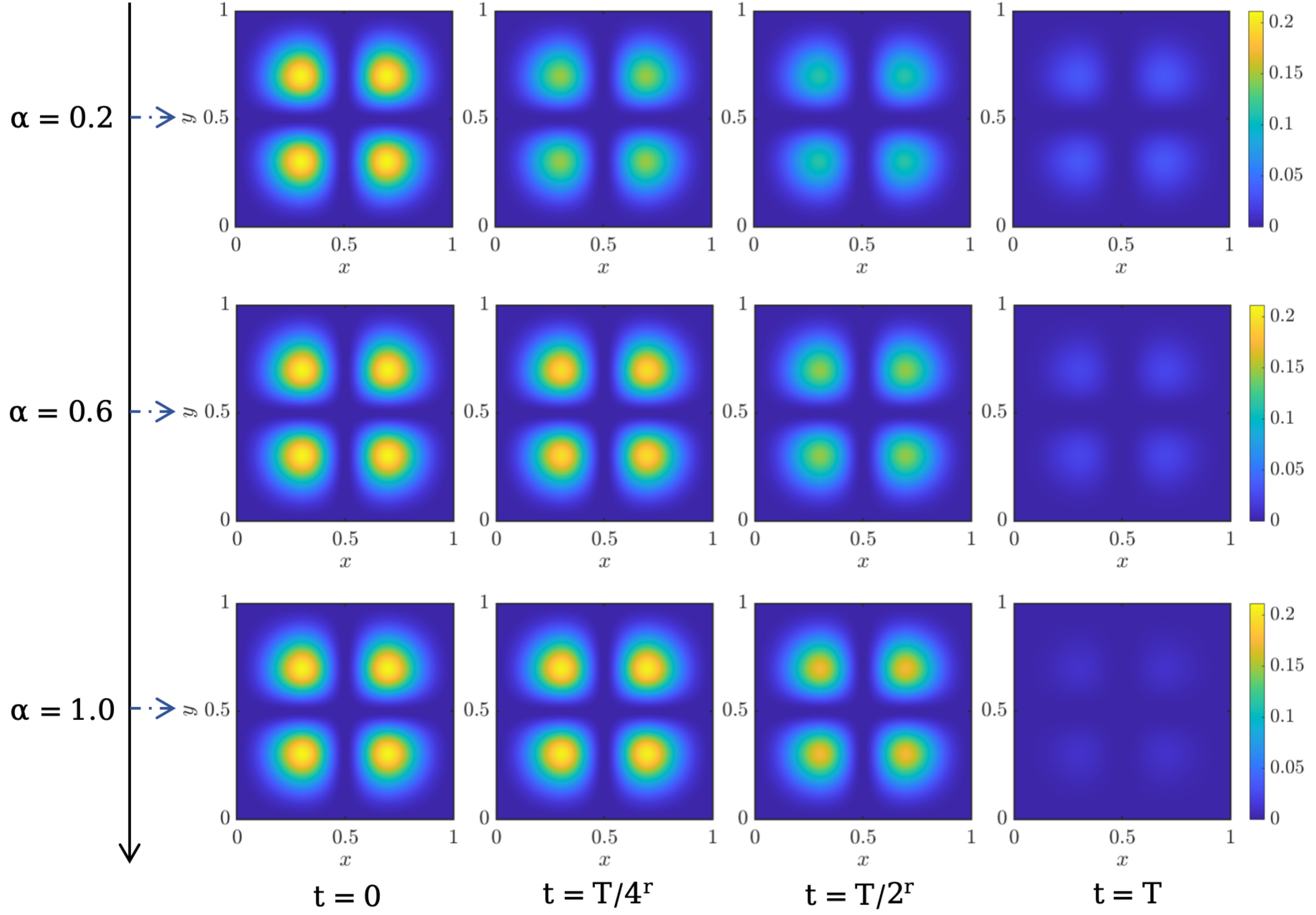}
\vspace{-0.2cm}
    \caption{Numerical solutions of \eqref{Equation} with varying $\alpha$ for {\bf Example 2}.}\vspace{-0.2cm}
    \label{fig:2d_vary_alpha}
\end{figure}
 
\vspace{0.2cm}
{\bf Example 3.} We consider $\Omega=(0,1)\times(0,1)$ and $T=2$ for \eqref{Equation} with
\begin{equation*}
\begin{split}
\phi(x,y)
=
\;&2\sin(\pi x)\sin(\pi y)
\Big\{
0.6e^{-120\left((x-\frac12)^2+(y-\frac12)^2\right)}\\
&+\sum_{m=0}^{5}
e^{-120\left((x-\frac12-\frac{1}{5}\cos\frac{m\pi}{3})^2
+(y-\frac12-\frac{1}{5}\sin\frac{m\pi}{3})^2\right)}
\Big\},
\end{split}
\end{equation*}
\begin{equation*}
\begin{split}
f(x,y,t)
=
e^{-\lambda t}
\bigl\{
&
\left(
-t^{\alpha-1}E_{\alpha,\alpha}(-t^\alpha)
-(\lambda+1)E_{\alpha,1}(-t^\alpha)
\right)\phi(x,y)
\\[3pt]
&-
E_{\alpha,1}(-t^\alpha)
\left(
\phi_{xx}(x,y)+\phi_{yy}(x,y)-\phi_x(x,y)-\phi_y(x,y)
\right)
\bigr\}.\\[3pt]
\end{split}
\end{equation*}
The corresponding exact solution is $u(x,y,t)=e^{-\lambda t}E_{\alpha,1}(-t^\alpha)\phi(x,y)$. In Fig.~\ref{fig:time_plot3}, convergence results are presented, where our scheme achieves second-order convergence in time, consistent with Theorems~\ref{thm:semi} and~\ref{fully_converge}. In Fig.~\ref{fig:2d_space2}, the errors are shown for varying the polynomial degree \(N\), where the spectral accuracy is again achieved. This validates the convergence result shown in Theorem~\ref{fully_converge}. Moreover, similar to {\bf Example 2}, the effects of the fractional parameters $\alpha$ and $\lambda$ on the solution of \eqref{Equation} are presented in Figs.~\ref{fig:2d_lambda2}-\ref{fig:2d_vary_alpha2}, where similar results have also been observed and obtained. For simplicity, we do not repeat them here.

\begin{figure}[htbp]
	\begin{minipage}[t]{0.45\linewidth}
  \centering
  \includegraphics[width=\textwidth]{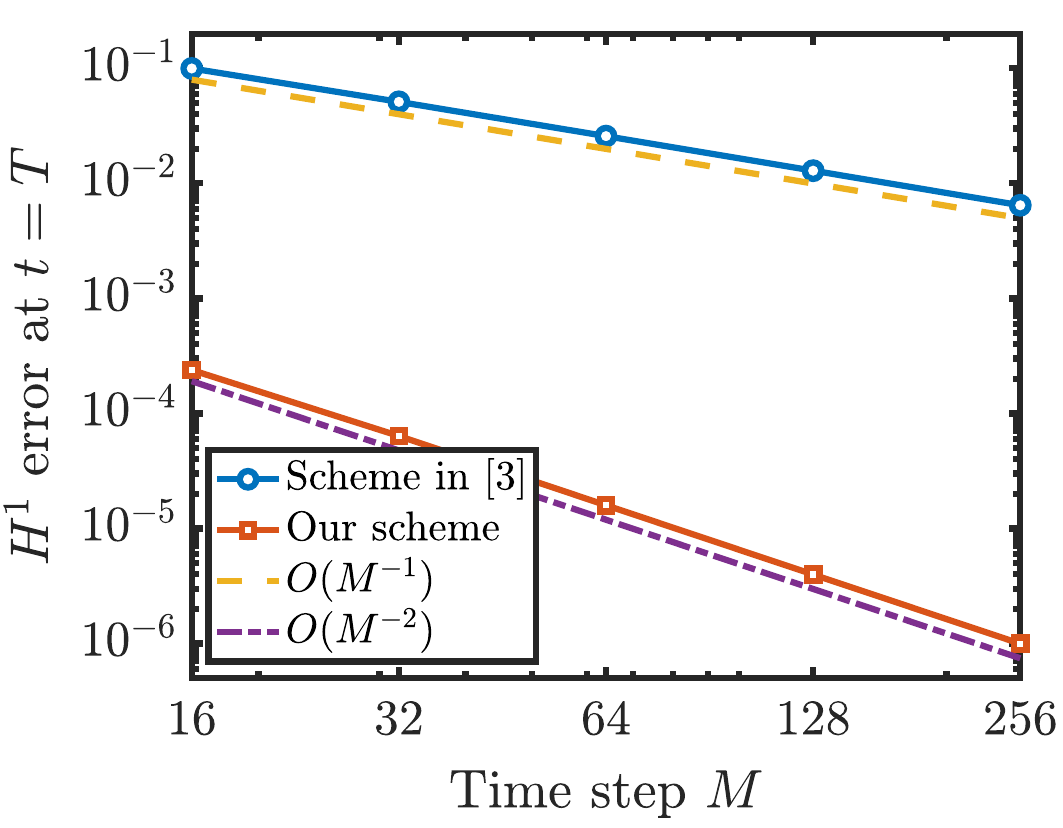}
  \centerline{\qquad\;(a) $\alpha=0.25$}
	\end{minipage}%
    \hspace{0.32cm} 
	\begin{minipage}[t]{0.45\linewidth}
   \centering
   \includegraphics[width=\textwidth]{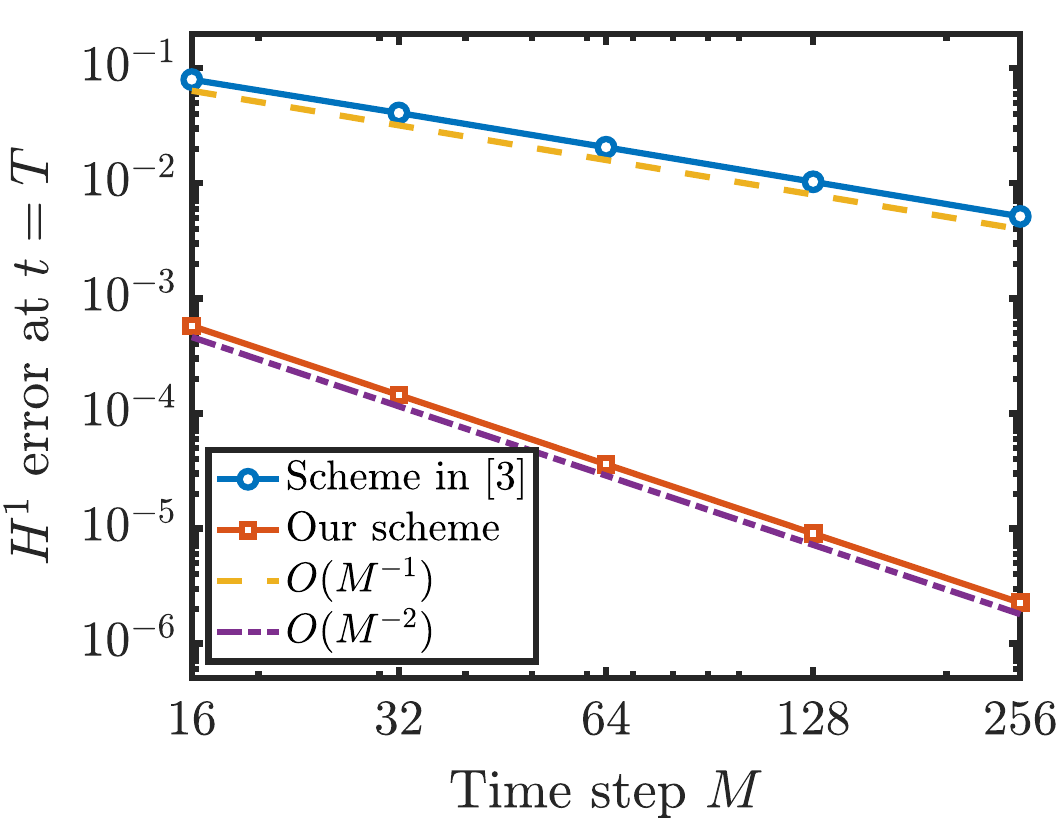}
   \centerline{\qquad\;(b) $\alpha=0.5$}
	\end{minipage}
    \begin{minipage}[t]{0.45\linewidth}
   \centering
   \includegraphics[width=\textwidth]{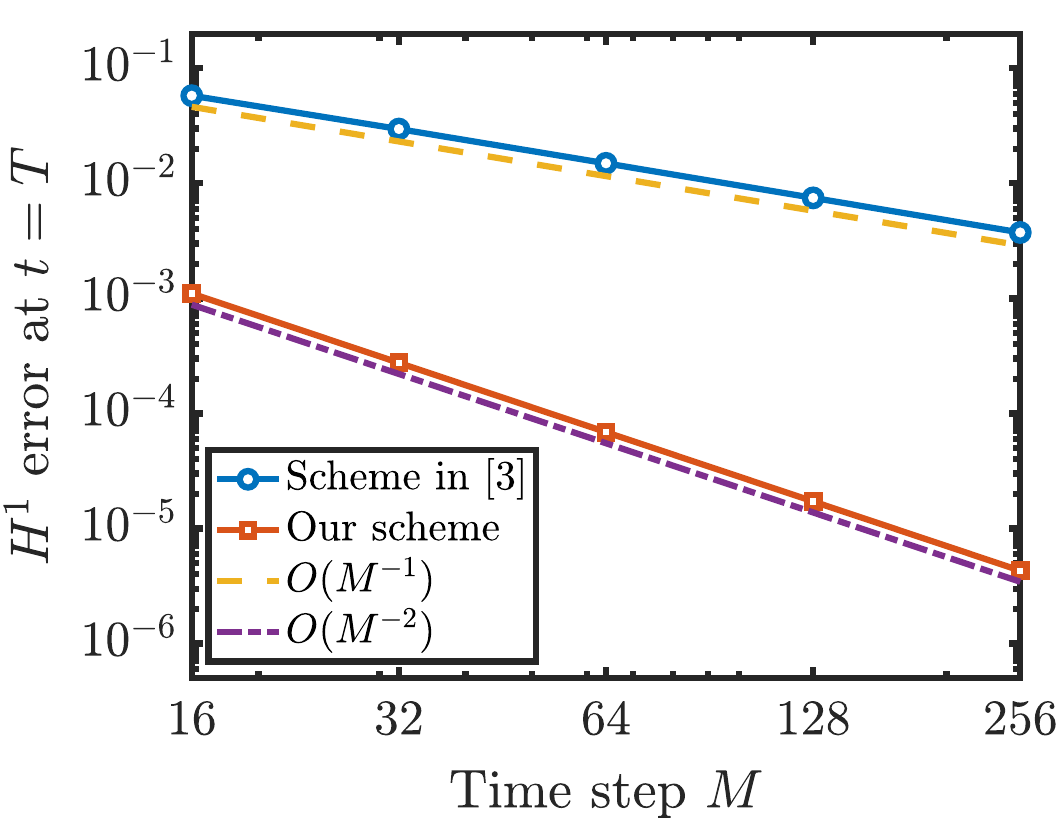}
   \centerline{\qquad\;(c) $\alpha=0.75$}
	\end{minipage}
    \hspace{0.2cm} 
    \begin{minipage}[t]{0.45\linewidth}
   \centering
   \includegraphics[width=\textwidth]{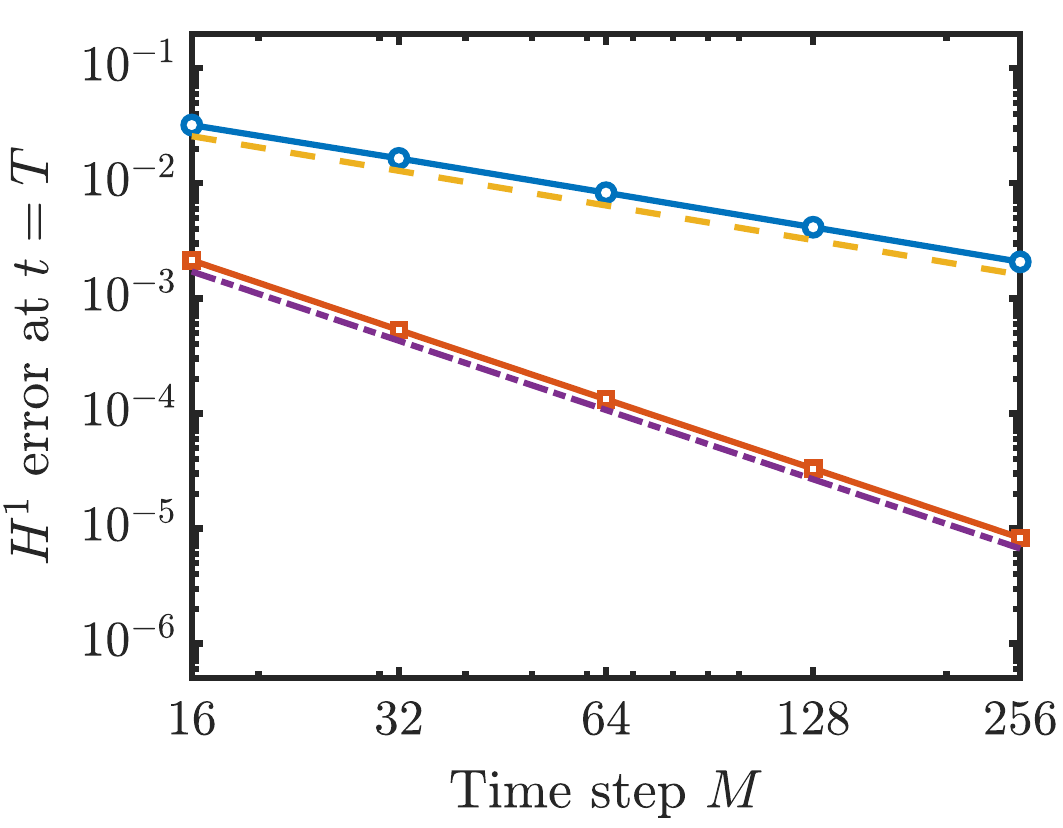}
   \centerline{\qquad\;(d) $\alpha=1.0$}
	\end{minipage}
    \vspace{-0.1cm}
\caption{A comparison between our scheme \eqref{eq:fast_scheme_weak} and the scheme in \cite{cao2020} for {\bf Example 3}.}\label{fig:time_plot3}\vspace{-0.7cm}
\end{figure}  
\begin{figure}[htbp]
    \centering
\includegraphics[width=0.5\textwidth]{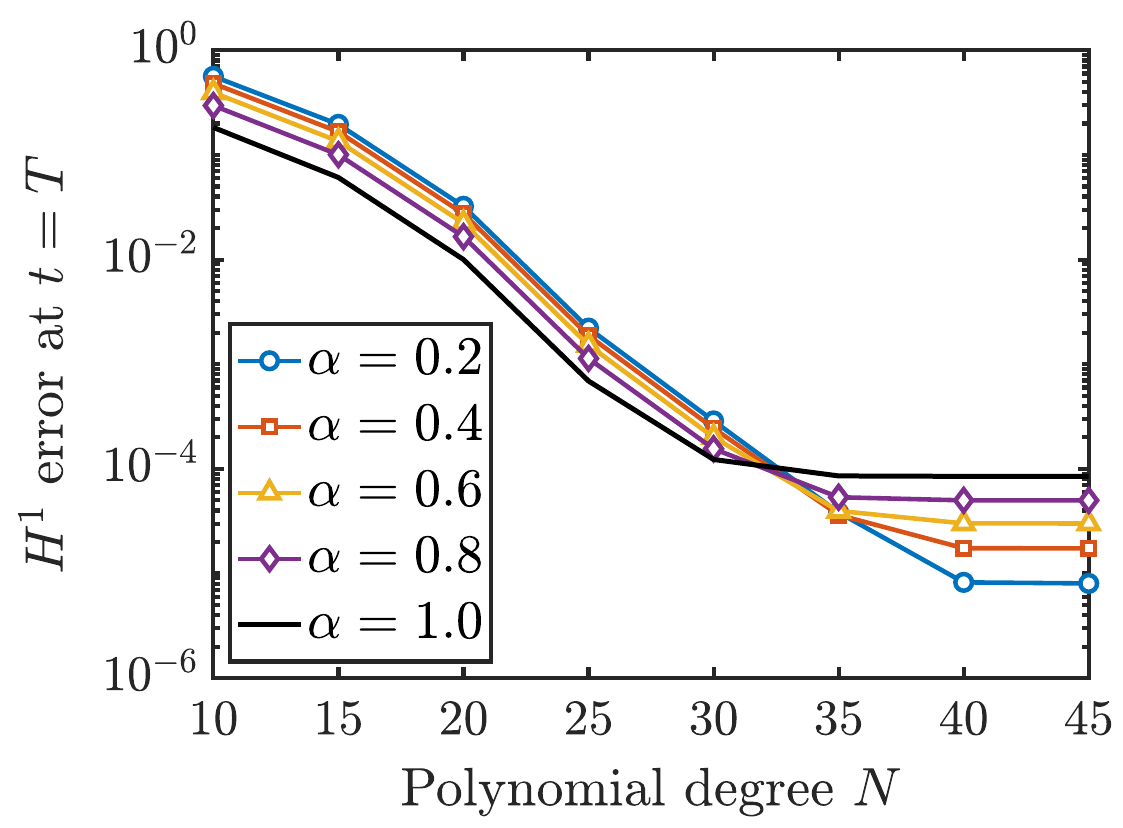}
\vspace{-0.2cm}
    \caption{Errors vs. polynomial degree $N$ for our scheme \eqref{eq:fast_scheme_weak} for {\bf Example 3}.}
    \label{fig:2d_space2}
\end{figure}

\begin{figure}[htbp]
    \centering
\includegraphics[width=0.92\textwidth]{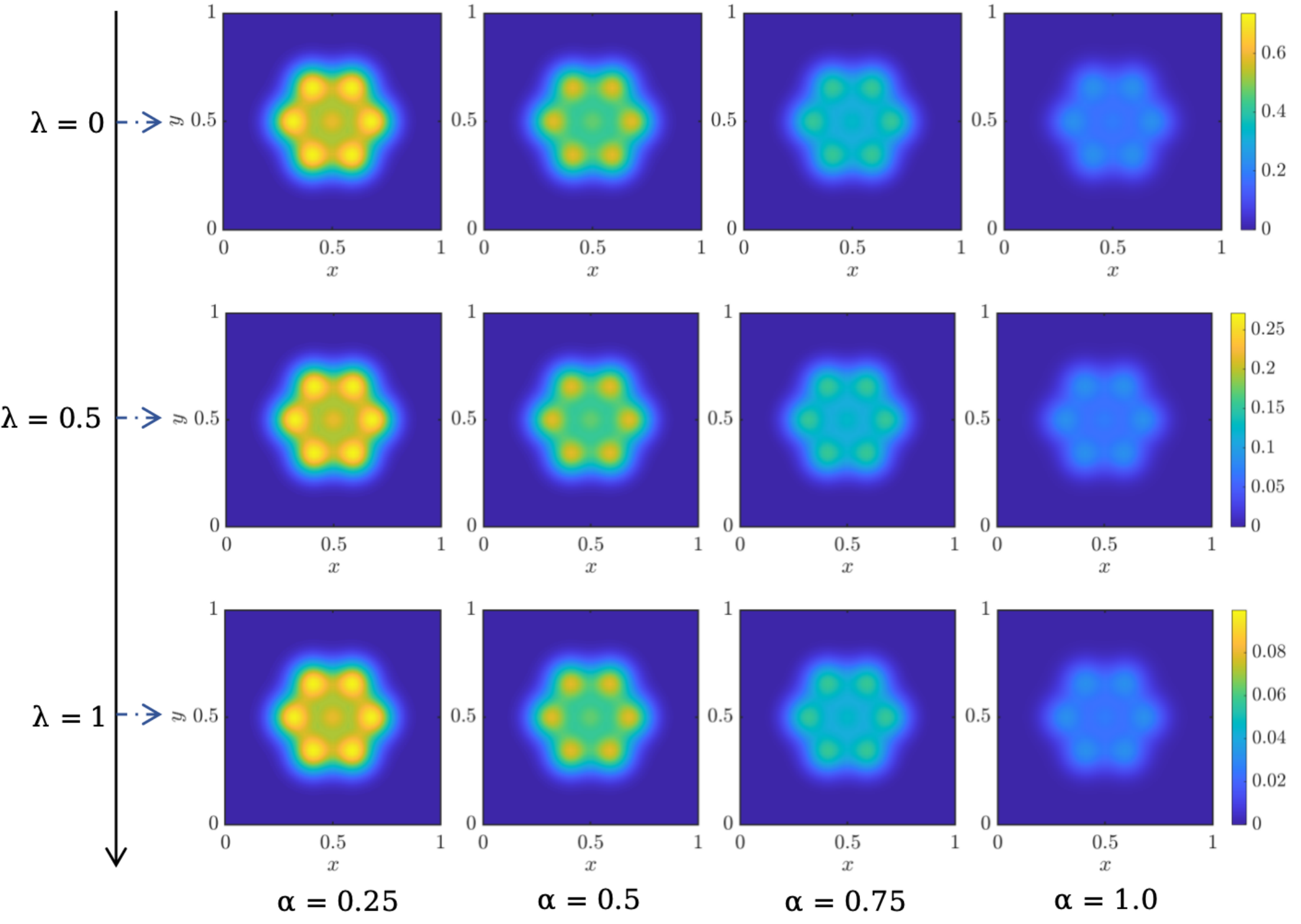}
\vspace{-0.2cm}
    \caption{Numerical solutions of \eqref{Equation} with varying $\lambda$ and $\alpha$ for {\bf Example 3}.} \vspace{-0.1cm}
    \label{fig:2d_lambda2}
\end{figure}

\begin{figure}[htbp]
    \centering
\includegraphics[width=0.92\textwidth]{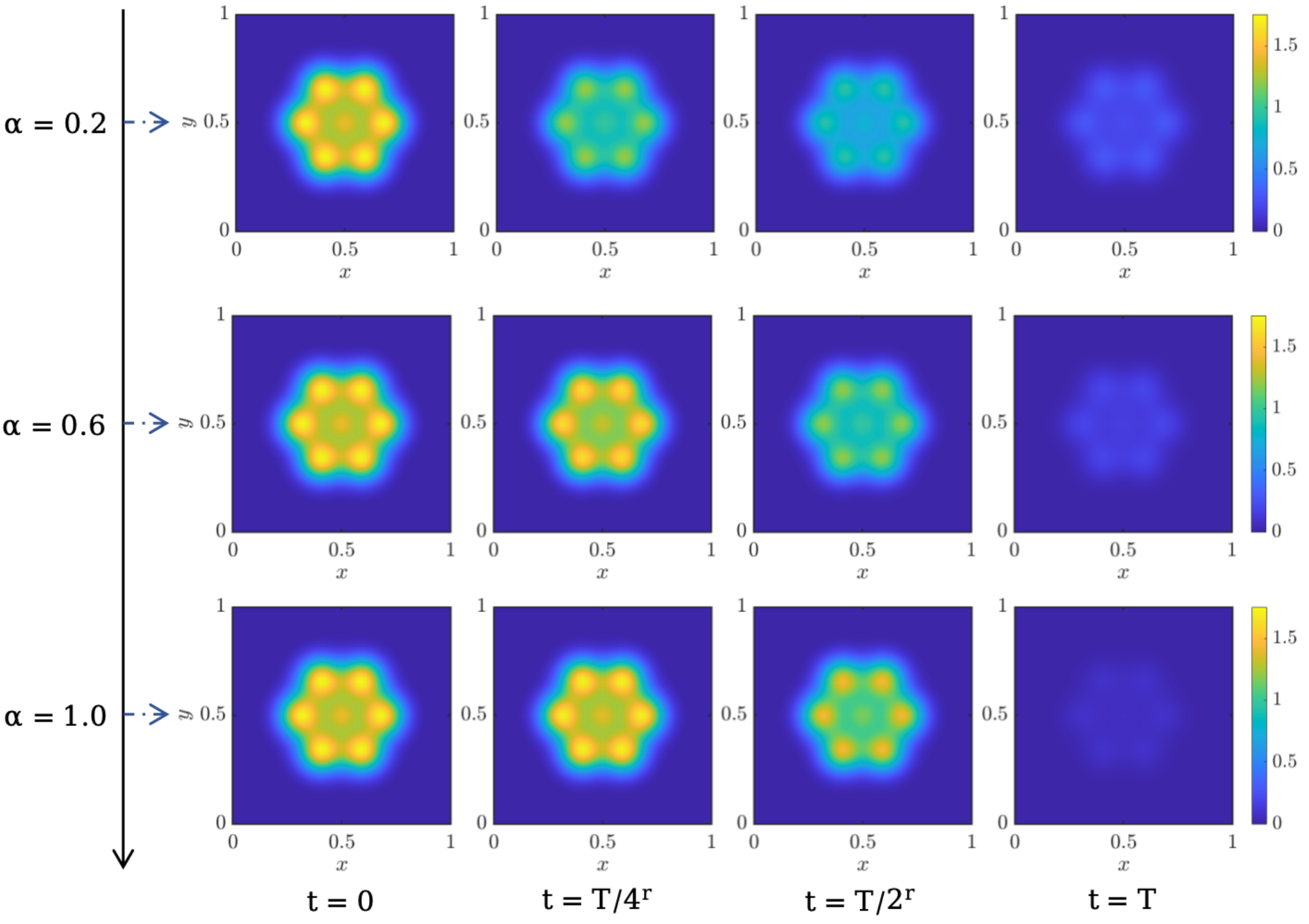}
\vspace{-0.2cm}
    \caption{Numerical solutions of \eqref{Equation} with varying $\alpha$ for {\bf Example 3}.}\vspace{-0.7cm}
    \label{fig:2d_vary_alpha2}
\end{figure}

\section{Concluding remarks}\label{sec:conclude}
In this paper, we have presented a second-order $H^1$-norm error analysis for the time-fractional advection-dispersion equations (TFADE). An efficient integrating-factor transformation is introduced, and error estimates for the discrete coefficients and truncation errors of the fast averaged L1 method on a graded temporal mesh are established. Numerical examples confirm the theoretical results in Theorems~\ref{thm:semi} and~\ref{fully_converge}. Future work will extend this framework to more complex problems, including variable-order and nonlinear fractional equations.

\bibliographystyle{h1/siamplain.bst}
\bibliography{h1/reference.bib}
\end{document}